\newcommand{\w}{\omega}
\newcommand{\R}{\mathbb{R}}
\newcommand{\C}{\mathbb{C}}
\newcommand{\N}{\mathbb{N}}
\newcommand{\Z}{\mathbb{Z}}
\newcommand{\T}{\mathbb{T}}
\newcommand{\G}{\mathcal{G}}
\newcommand{\E}{\mathcal{E}}
\renewcommand{\L}{\mathcal{L}}
\renewcommand{\i}{\iota}
\renewcommand{\S}{\mathcal{S}}
\newcommand\norm[1]{\lVert#1\rVert}
\DeclareMathOperator{\Id}{Id}
\DeclareMathOperator{\supp}{supp}
\DeclareMathOperator{\interior}{int}
\DeclareMathOperator{\vectorspan}{span}
\newtheorem{thm}{Theorem}[section]
\newtheorem{lemma}[thm]{Lemma}
\newtheorem{cor}[thm]{Corollary}
\newtheorem{proposition}[thm]{Proposition}
\newtheorem{mainTheorem}{Theorem}
\theoremstyle{remark}
\newtheorem{notation}[thm]{Notation}
\newenvironment{remark}
  {\pushQED{\qed}\remarkx}
  {\popQED\endremarkx}
\newenvironment{example}
  {\pushQED{\qed}\examplex}
  {\popQED\endexamplex}
\theoremstyle{definition}
\newtheorem{definition}[thm]{Definition}
\newtheorem{question}[thm]{Question}
\crefname{subsection}{subsection}{subsections}
\crefname{mainTheorem}{theorem}{theorems}
\numberwithin{equation}{section}
\numberwithin{figure}{section}
\begin{document}

\title{Weinstein neighbourhood theorems for stratified subspaces}

\author{Yael Karshon}
\author{Sara B. Tukachinsky}
\author{Yoav Zimhony}
\address{Tel Aviv University}

\email{yaelkarshon@tauex.tau.ac.il,
sarabt1@gmail.com,
yoavzimhony@gmail.com}

\subjclass[2020]{53D05, 58A35 (Primary); 53D12, 58A40  (Secondary)}

\begin{abstract}
By analogy with Weinstein's neighbourhood theorem, we prove a uniqueness result for symplectic neighbourhoods of a large family of stratified subspaces.
This result generalizes existing constructions, e.g., in the search for exotic Lagrangians.
Along the way, we prove a strong version of Moser's trick and a (non-symplectic) tubular neighbourhood theorem for these stratified subspaces.
\end{abstract}

\maketitle
\tableofcontents

\section{Introduction}

\subsection{Normal form theorems}\label{subsection:normal_form_theorems}

Weinstein's Lagrangian neighbourhood theorem~\cite{WEINSTEIN1971329} states that a neighbourhood of a Lagrangian submanifold $L$ in a symplectic manifold is symplectomorphic to a neighbourhood of the zero section in the cotangent bundle $T^*L$. More generally, Weinstein~\cite[Lecture~5]{weinstein_lectures} used the tubular neighbourhood theorem and Moser's trick \cite{mosers_trick} to prove the following symplectic normal form theorem. Let $\left(M_0, \w_0\right)$ and $\left(M_1, \w_1\right)$ be symplectic manifolds, $N_0,N_1$ submanifolds, and $g:N_0 \to N_1$ a diffeomorphism. Then there exists a symplectomorphism extending $g$ if and only if there exists a symplectic bundle isomorphism
\begin{equation*}
    G:\left(TM_0\Big\lvert_{N_0}, \w_0\right) \to \left(TM_1\Big\lvert_{N_1}, \w_1\right)
\end{equation*}
over $g$ which restricts to $Dg$ on $TN_0$. If we futher assume that $N_i$ are coisotropic, then the existence of such $G$ is equivalent to $g^*\w_1 = \w_0$ as forms on $N_0$~\cite{coisotropic_imbedding}.

These symplectic normal form theorems provide local models for neighbourhoods of submanifolds. In the literature, there are also some normal form theorems for neighbourhoods of specific singular subspaces; see \Cref{subsubsection:existing_normal_forms}. Such models enable explicit constructions and calculations. They are used in the study of Hamiltonian group actions, in the search for exotic Lagrangians (``Lagrangian knots'' \cite{first_steps_in_symplectic_top,polterovich2024lagrangian}), and more.

\subsection{Results}\label{subsection:introduction_results}

In this paper, we provide local normal form theorems for a broad class of singular subspaces.
This class consists of the \emph{stratified subspaces} (\Cref{definition:stratified_space}) that are \emph{smoothly locally trivial with conical fibers} (\Cref{definition:smoothly_locally_trivial}), introduced in \cite{zimhony2024commutative}. This condition controls singularities and can be checked locally. For a subset $A\subset M$ and a decomposition
\begin{equation*}
    A = \bigsqcup_{X\in \S}X \subset M
\end{equation*}
into \emph{strata} $X$, the condition is that for every stratum $X\in \S$ and every $p\in X$, a neighbourhood $U$ of $p$ in $M$ can be identified with $\R^k \times \R^{n-k}$ such that
\begin{itemize}
    \item $X\cap U$ is identified with $\R^k\times\{0\}$;
    \item For every stratum $Y\in \S$ with $\overline{Y}\supset X$, the intersection $Y\cap U$ is identified with $\R^k\times C_Y$ for a subset $C_Y\subset \R^{n-k}\setminus \{0\}$ invariant under multiplication by all $t\in(0,1)$.
\end{itemize}

Stratified subspaces that are smoothly locally trivial with conical fibers include all the examples in \Cref{subsubsection:existing_normal_forms} below, as well as zero level sets of momentum maps $\mu$ for compact group actions and, more generally, components of the critical set of $\norm{\mu}^2$ for such~$\mu$ (\cite[Section~7]{zimhony2024commutative}).

Throughout this introduction, all our stratified subspaces are implicitly assumed to satisfy this condition.
Throughout the paper, we endow these subspaces with the induced subspace differential structure in the sense of Sikorski. See \Cref{section:stratified_differential_spaces} for formal definitions.

Our main results are \Cref{thm:main_coisotropic,thm:main_symplectic}. \Cref{thm:main_coisotropic} is a symplectic normal form theorem around \emph{strongly coisotropic} stratified subspaces.
See \Cref{section:stratified_differential_spaces} for the definitions of the terms that appear in its statement.

\begin{mainTheorem}\label{thm:main_coisotropic}
    For $i=0,1$, let $\left(M_i, \w_i\right)$ be symplectic manifolds and $\left(A_i, \S_i\right)$ stratified subspaces of them, smoothly locally trivial with conical fibers.
    Let $g:\left(A_0, \S_0\right) \to \left(A_1, \S_1\right)$ be a stratified diffeomorphism with $g^*\w_1 = \w_0$ as Zariski forms on $A_0$.
    Assume that $A_i$ are strongly coisotropic.  
    Then there exist neighbourhoods $U_i\subset M_i$ of $A_i$ for $i=0,1$ and a symplectomorphism
    \begin{equation*}
        \G:(U_0, \w_0) \to (U_1, \w_1)
    \end{equation*}
    such that $\G$ restricts on $A_0$ to a stratified diffeomorphism
    \begin{equation*}
        \G\Big\lvert_{A_0}:\left(A_0, \S_0\right) \to \left(A_1, \S_1\right).
    \end{equation*}
    Moreover, this restriction is isotopic to $g:A_0 \to A_1$ through a smooth family of stratified diffeomorphisms
    \begin{equation*}
        g_t:\left(A_0, \S_0\right) \to \left(A_1, \S_1\right) \quad t\in [0,1],
    \end{equation*}
    with $g_t^*\w_1 = \w_0$ as Zariski forms on $A_0$ for all $t\in [0,1]$.
\end{mainTheorem}
Note that the symplectomorphism $\G$ need not restrict to the given stratified diffeomorphism $g$, but rather to a stratified diffeomorphism isotopic to $g$. This is a limitation of the proof, and it remains unclear whether one can construct a symplectomorphism whose restriction is exactly $g$. 
One can arrange the isotopy to be supported in an arbitrarly small neighbourhood of the singular part of $(A_0, \S_0)$. Thus, \Cref{thm:main_coisotropic} recovers the classical neighbourhood theorem for coisotropic submanifolds \cite{coisotropic_imbedding}.

For stratified subspaces $A_0$ and $A_1$ that are not strongly coisotropic, we require the additional data of a \emph{symplectic tangent bundle isomorphism} ${G:TM_0\Big\lvert_{A_0} \to TM_1\Big\lvert_{A_1}}$ over $g$ (\Cref{definition:bundle_isomorphism_over_g}) such that the pair $(g,G)$ is \emph{locally extendable} (\Cref{definition:extendable_pairs}).

\begin{mainTheorem}\label{thm:main_symplectic}
    For $i=0,1$, let $\left(M_i, \w_i\right)$ be symplectic manifolds and $\left(A_i, \S_i\right)$ stratified subspaces of them, smoothly locally trivial with conical fibers.
    Let $g:\left(A_0, \S_0\right) \to \left(A_1, \S_1\right)$ be a stratified diffeomorphism and let
    \begin{equation*}
        G:\left(TM_0\Big\lvert_{A_0}, \w_0\right) \to \left(TM_1\Big\lvert_{A_1}, \w_1\right)
    \end{equation*}
    be a symplectic tangent bundle isomorphism over $g$, such that the pair $(g, G)$ is locally extendable.
    Then there exist neighbourhoods $U_i\subset M_i$ of $A_i$ for $i=0,1$ and a symplectomorphism
    \begin{equation*}
        \G:(U_0, \w_0) \to (U_1, \w_1)
    \end{equation*}
    with $\G\big\lvert_{A_0}$ = $g$.
\end{mainTheorem}

The assumptions that $G$ is a symplectic tangent bundle isomorphism over $g$ and that $(g,G)$ is locally extendable imply that $g^*\w_1 = \w_0$ as Zariski forms on $A_0$, which is explicit in \Cref{thm:main_coisotropic}. When the stratified subspaces $A_i$ are submanifolds, local extendability is equivalent to $G\Big\lvert_{TA_0} = Dg$ (see \Cref{remark:locally_extendable_analog}), and \Cref{thm:main_symplectic} recovers the classical result~\cite[Lecture~5]{weinstein_lectures}. For general stratified subspaces, the condition $G\Big\lvert_{TA_0} = Dg$ is weaker than local extendability.

The proof of \Cref{thm:main_symplectic} relies on the following (non-symplectic) analogue of the tubular neighbourhood theorem for general closed subspaces.

\begin{mainTheorem}\label{thm:main_diffeomorphism}
    For $i=0,1$, let $M_i$ be manifolds and $A_i \subset M_i$ closed subspaces of them. Let $g:A_0 \to A_1$ be a diffeomorphism and let
    \begin{equation*}
        G:TM_0\Big\lvert_{A_0} \to TM_1\Big\lvert_{A_1}
    \end{equation*}
    be a tangent bundle isomorphism over $g$, such that the pair $(g, G)$ is locally extendable.

    Then there exist neighbourhoods $U_i\subset M_i$ of $A_i$ for $i=0,1$ and a diffeomorphism
    \begin{equation*}
        \G:U_0 \to U_1
    \end{equation*}
    which satisfies
    \begin{itemize}
        \item $\G\big\lvert_{A_0}$ = $g$ and
        \item $D\G$ agrees with $G$ along $A_0$, i.e., on $TM_0\Big\lvert_{A_0}$.
    \end{itemize}
\end{mainTheorem}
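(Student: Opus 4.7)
The plan is to build $\G$ in three stages: (i) extend the pair $(g,G)$ to a smooth map $\tilde\G : V_0 \to M_1$ on some open neighbourhood $V_0 \supset A_0$ with $\tilde\G|_{A_0}=g$ and $D\tilde\G|_{A_0}=G$; (ii) observe that $\tilde\G$ is a local diffeomorphism near $A_0$, since $D\tilde\G$ is continuous and equals the bundle isomorphism $G$ on $A_0$; (iii) shrink $V_0$ to some open $U_0\supset A_0$ on which $\tilde\G$ is also globally injective, and take $U_1:=\tilde\G(U_0)$.

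For step~(i), I would work locally around each $p\in A_0$. Sikorski-smoothness of $g$ provides a smooth extension $\tilde g_p$ of $g|_{A_0\cap W_p}$ to some chart $W_p\subset M_0$ around $p$; its differential along $A_0$ automatically matches $G$ on $T^ZA_0$ (because $G|_{T^ZA_0}=D^Zg$), so the discrepancy $H_p:=G-D\tilde g_p$ is a bundle map vanishing on $T^ZA_0$. Using the ``smoothly locally trivial with conical fibres'' hypothesis to identify $W_p\cong\R^k\times\R^{n-k}$ with $A_0\cap W_p\cong\R^k\times(\{0\}\cup\bigcup_Y C_Y)$ (where $X=\R^k\times\{0\}$ is the stratum of $p$), one then constructs a smooth local correction $\psi_p$ (valued in $\R^N$ after an embedding $M_1\hookrightarrow\R^N$) that vanishes on $A_0\cap W_p$ and whose differential along $A_0\cap W_p$ is $H_p$. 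The corrected map $\tilde g_p+\psi_p$ has the right 1-jet on $A_0\cap W_p$. Patching these local pieces by a partition of unity in $\R^N$ and composing with the tubular retraction onto $M_1$ yields the global $\tilde\G$; the 1-jet along $A_0$ survives the averaging because all local pieces take the common value $g$ on $A_0$.

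For steps~(ii) and~(iii), continuity of $D\tilde\G$ and the pointwise invertibility of $G$ give an open $V_0'\subset V_0$ containing $A_0$ on which $\tilde\G$ is a local diffeomorphism. For global injectivity, suppose for contradiction that no open neighbourhood of $A_0$ in $V_0'$ admits an injective restriction of $\tilde\G$. Then one extracts sequences $q_n\neq q_n'$ in $V_0'$ with $\tilde\G(q_n)=\tilde\G(q_n')$ and both sequences approaching $A_0$. Passing to convergent subsequences $q_n\to p$ and $q_n'\to p'$ in $A_0$, the equality $g(p)=g(p')$ and injectivity of $g$ force $p=p'$; but then $q_n$ and $q_n'$ eventually lie in a common chart on which $\tilde\G$ is a diffeomorphism, contradicting $q_n\neq q_n'$. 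So some open $U_0\subset V_0'$ containing $A_0$ admits an injective $\tilde\G|_{U_0}$, giving the required diffeomorphism $\G:=\tilde\G|_{U_0}$ onto $U_1:=\tilde\G(U_0)\supset A_1$.

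The main obstacle is the local construction of $\psi_p$ in step~(i). One needs a smooth function on $\R^k\times\R^{n-k}$ vanishing on every stratum $\R^k\times C_Y$ of $A_0$ passing through $p$ and whose differential along each such stratum equals a prescribed linear map (the restriction of $H_p$) annihilating $T^ZA_0$. The conical-fibre hypothesis is essential here: it supplies the product chart in which to write the formula, and it forces each tangent space to a stratum into $T^Z_pA_0$, so that the 1-jet data is consistent across all strata passing through $p$. Once $\psi_p$ is in hand, the rest of the proof is essentially routine.
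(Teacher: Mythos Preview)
Your three-stage plan is a genuinely different route from the paper's. The paper never builds a single global extension $\tilde\G$ and then shrinks; instead it works by ascending induction on the dimension of strata, and at each stratum $X$ constructs Euler-like vector fields $\E_0,\E_1$ along $X_0,X_1$ (tangent to higher strata) whose induced tubular neighbourhood embeddings $\psi_0,\psi_1$ let one \emph{define} $\G^{X}:=\psi_1\circ G\circ\psi_0^{-1}$ as a composition of diffeomorphisms. Injectivity is then automatic, and the work goes into gluing $\G^{X}$ with the previously constructed map over lower strata. The paper's own summary of this is that ``gluing diffeomorphisms is hard, but gluing vector fields is easy''; your proposal is precisely the ``glue diffeomorphisms'' strategy they are trying to avoid.

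Your step~(i) is fine in spirit: the local existence of an extension with prescribed $1$-jet along $A_0$ is exactly the Whitney Extension Theorem (the paper's Lemma~2.16), so you don't need to build $\psi_p$ by hand from the conical model. Your partition-of-unity patching also preserves the $1$-jet along $A_0$, since all local pieces share the common value $g$ there and $\sum d\varphi_p=0$.

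The real gap is in step~(iii). The sentence ``passing to convergent subsequences $q_n\to p$ and $q_n'\to p'$ in $A_0$'' presupposes compactness that is nowhere assumed: $A_0$ is merely closed in $M_0$, and the hypothetical failure pairs $(q_n,q_n')$ can run off to infinity along $A_0$. The conclusion you want (a local diffeomorphism agreeing with a homeomorphism $g:A_0\to A_1$ between closed sets restricts to a diffeomorphism on some neighbourhood) \emph{is} true, but its proof requires a genuine properness argument---typically one covers $A_0$ by a locally finite family of relatively compact charts on which $\tilde\G$ is injective and uses that $g$ is proper as a map into $M_1$ to control overlaps. This is not the ``essentially routine'' step you describe, and indeed it is exactly the difficulty the paper's Euler-like-vector-field machinery is engineered to bypass.
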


There is an interesting difference between the results of \Cref{thm:main_symplectic,thm:main_diffeomorphism}. In \Cref{thm:main_diffeomorphism} we obtain $D\G\big\lvert_{A_0} = G$, whereas in \Cref{thm:main_symplectic} this is
not guaranteed. The loss of this property happens because our version of Moser's trick, \Cref{thm:moser_trick}, uses a weak deformation retraction instead of a strong one. In this weak version, the differential of the resulting symplectomorphism might fail to restrict to the identity along the stratified subspace. See \Cref{example:weak_moser_trick_differential} for this failure and \Cref{appendix:strong_deformation_retraction_moser} for a proof that in the strong case such failure does not occur.

\begin{mainTheorem}\label{thm:moser_trick}
    Let $V$ be a manifold, $A\subset V$ a closed subset, and $R:[0,1]\times V \to V$ a smooth weak deformation retraction from $V$ to $A$. Let $\w_0, \w_1$ be symplectic forms on $V$ that agree on $TM\Big\lvert_A$.
    Then there exist neighbourhoods $U',U'' \subset V$ of $A$ and a diffeomorphism
    \begin{equation*}
        \G:U'\to U''
    \end{equation*}
    which fixes $A$ and such that $\G^*\w_1 = \w_0$.
\end{mainTheorem}

We refer to the neighbourhoods constructed in  \Cref{thm:main_coisotropic,thm:main_symplectic} as Weinstein neighbourhoods, by analogy with the case of Lagrangian submanifolds. It would be interesting to know whether they admit the structure of a Weinstein, or even Liouville, domain; see \Cref{question:Liouville_domain}.
\Cref{thm:exactness_of_symplectic_form} provides the first step toward adressing this question: that if the stratified subspace is \emph{isotropic}, i.e., its strata are isotropic, it has a neighbourhood in which the symplectic form is exact.

\Cref{lemma:lagrangian_stratified_equivalence} shows that a stratified subspace $A$ is Lagrangian (i.e., isotropic and \linebreak coisotropic; see \Cref{definition:isotropic_and_coisotropic}) if and only if the union of its Lagrangian strata is dense in $A$.
If, in addition, $A$ is strongly coisotropic, then a neighbourhood of $A$ is exact and is determined, up to a symplectomorphism preserving $A$, by the intrinsic differential structure on $A$ and the restriction of the symplectic form to the Zariski tangent over the singular part (\Cref{thm:main_coisotropic,thm:exactness_of_symplectic_form}).
This is analogous to Weinstein's Lagrangian neighbourhood theorem \cite{WEINSTEIN1971329} for neighbourhoods of Lagrangian submanifolds.

\subsection{Applications}

\subsubsection{Existing normal form theorems}\label{subsubsection:existing_normal_forms}

In the literature, there are symplectic normal form theorems around the following singular subspaces:
\begin{enumerate}
    \item\label{example:nodel_fiber} A Lagrangian 2-sphere with one positive self-intersection,
    used in \cite{MR1852770} to construct generalized symplectic rational blowdowns.
    \item\label{item:2_mutation_conf} A \emph{Mutation configuration}, which is a Lagrangian $2$-torus with an attached Lagrangian disc
    \cite[Lemma~4.11]{PASCALEFF2020106850}, used in~\cite[Section~4]{PASCALEFF2020106850} to analyze wall-crossings of potential functions of monotone Lagrangian $2$-tori.
    \item\label{example:lagrangian_pinwheel} \emph{Good Lagrangian pinwheels}~\cite[Lemma~3.4]{symplectic_rational_blowup}, used in~\cite{brendel2024semi} to construct exotic tori in non-monotone $4$-manifolds.
    \item\label{example:solid_mutation_configuration} \emph{Solid mutation configurations}~\cite[Lemma~2.4]{solid_mutation_conf}, which are generalizations of \cref{item:2_mutation_conf} to higher dimensions, used in~\cite{solid_mutation_conf} to construct infinitely many exotic montone Lagrangian tori in projective spaces.
    \item\label{example:conical_mutation_configurations} \emph{Conical mutation configurations}~\cite[Lemma~6.11]{chanda2024bohrsommerfeld}, which are another generalizations of \cref{item:2_mutation_conf}, used in \cite[Section~6.2]{chanda2024bohrsommerfeld} to develop a generalization of Lagrangian disc surgeries to higher dimensions.
    \item\label{example:Ac_buildings} Ac-buildings \cite[Section~3.3]{alvarezgavela2022arborealization}, which are certain singular subspaces with \emph{arboreal singularities}.
\end{enumerate}
The above examples are all Lagrangian, see below. There are also symplectic normal form theorems around some non-Lagrangian singular subspace, for example, chains of transversely intersecting symplectic spheres in a symplectic $4$-manifold \cite{symplectic_crossing_divisors}.

All the singular subspaces described above admit stratifications that are smoothly locally trivial with conical fibers. For example, a Lagrangian pinwheel can be decomposed into its core circle and an open 2-disc, and a mutation configuration can be decomposed into the intersection of the torus and the disk and the two complements. We verify in \Cref{proposition:L_dpq_stratified_diffeo,proposition:solid_mutation_stratification} that these stratifications are smoothly locally trivial with conical fibers.

The stratifications in Examples \cref{example:nodel_fiber}--\cref{example:Ac_buildings} are isotropic. Those in Examples \cref{example:nodel_fiber}--\cref{example:conical_mutation_configurations} are strongly coistropic, by \Cref{remark:depth_1_strongly_coisotropic}. It would be interesting to check if Example~\cref{example:Ac_buildings} is strongly coisotropic too.

To obtain a normal form for Examples \cref{example:nodel_fiber}--\cref{example:conical_mutation_configurations} using \Cref{thm:main_coisotropic}, it is enough to construct a stratified diffeomorphism $g:A_0\to A_1$ such that, near the singular part, we have $g^*\w_1 = \w_0$ as Zariski forms on $A_0$ (the Lagrangian assumption implies both forms vanish on the regular part; see \Cref{remark:g_pullback_matters_on_singular_part}). In \Cref{section:explicit_examples}, we illustrate the construction for Examples~\cref{item:2_mutation_conf}, \cref{example:lagrangian_pinwheel}, and \cref{example:solid_mutation_configuration}.

A remark about Example~\cref{example:conical_mutation_configurations}: consider a conical mutation configuration in $(M^{2n}, \w)$, consisting of a smooth Lagrangian and a cone over a Legendrian torus $\T^{n-1}\subset S^{2n-1}$, which intersect cleanly along $\T^{n-1}$. Pascaleff and Tonkonog \cite[Remark~5.7]{PASCALEFF2020106850} discuss a symplectic invariant of a Weinstein neighbourhood of this configuration --- the Legendrian isotopy class of the link $\T^{n-1}\subset S^{2n-1}$ of the conical sigularity. 
Our condition $g^*\w_1 = \w_0$ as Zariski forms on $A_0$ detects this invariant: the differential $D_{p_0}g:T^Z_{p_0}A_0 \to T^Z_{g(p_0)}A_1$ of $g$ at the cone point $p_0$ extends to a symplectic linear map $T_{p_0}M_0 \to T_{g(p_0)}M_1$ carrying one Legendrian link to the other.

\subsubsection{New normal form theorems}

Brendel, Hauber, and Schmitz~\cite{brendel2024semi} define certain model spaces $B_{dpq}$, and use them to construct exotic tori for symplectic manifolds $M$ that admit a symplectic embedding $B_{dpq} \hookrightarrow M$. The Lagrangian skeleton of $B_{dpq}$, denoted by $L_{dpq}$, is a chain starting with a Lagrangian $(p,q)$-pinwheel and followed by $d-1$ Lagrangian spheres, such that every member of the chain intersects its neighbours transversely in one point. 
By \Cref{cor:L_dpq_determines_neigbourhood} of \Cref{thm:main_coisotropic}, if $L_{dpq}$ embeds in $M$, then $B_{dpq}$ symplectically embeds in $M$.
Therefore, \cite[Theorem~B]{brendel2024semi} holds under the (a priori weaker) assumption $L_{dpq} \subset M$.

More generally, almost toric fibrations of symplectic $4$-manifolds~\cite{almost_toric} provide families of singular Lagrangian subspaces, often referred to as ``\emph{visible Lagrangians}'' \cite[Chapter~5]{Evans_book_2023}, and model neighbourhoods of them. Examples include
\begin{enumerate}
    \item Lagrangian $2$-tori (regular fibers);
    \item Lagrangian 2-spheres with one positive self-intersection (nodal fibers);
    \item Lagrangian pinwheels (``hitting an edge'' \cite[Section~5.3]{Evans_book_2023});
    \item mutation configurations and Lagrangian pinwheels with attached spheres can be described as unions of ``visible Lagrangians''.
\end{enumerate}

In a general symplectic manifold, given a singular Lagrangian subspace that is intrinsically diffeomorphic to a ``visible Lagrangian'' $L$ in some almost toric symplectic manifold, we can use the symplectic normal form theorem to identify its neighbourhood with a model neighbourhood of $L$. Within the model neighbourhood, ``almost toric manipulations'' such as node sliding can be performed, then pulled back to the original manifold. This generalizes the construction in \cite{brendel2024semi}, where $L=L_{dpq}$, and \cite[Section~4]{PASCALEFF2020106850}, where $L$ is a mutation configuration.

Pascaleff and Tonkonog \cite{PASCALEFF2020106850}
give a wall-crossing formula relating the disc potentials of a monotone Lagrangian torus $L_1$ and its mutation $L_2$. Their main tool is \cite[Theorem~1.1]{PASCALEFF2020106850}, which reduces the question of matching their disc potentials in $M$ to the question of a non-vanishing Lagrangian Floer cohomology within a Liouville domain $U\subset M$ containing both Lagrangians, such that $L_1,L_2\subset U$ are exact.
Contrariwise, Chanda~\cite{chanda2024bohrsommerfeld} proves a wall-crossing formula using neck stretching techniques. In \Cref{question:Liouville_domain} we ask whether our construction, around Lagrangian stratified subspaces, admits a structure of a Liouville domain. In \Cref{question:Lagrangian_skeleton} we furthermore ask whether it admits a structure of a Liouville domain whose Lagrangian skeleton is the given Lagrangian subspace. Positive answers to these questions would render the technique of \cite[Theorem~1.1]{PASCALEFF2020106850} applicable to our neighbourhoods. For conical mutation configurations, this might enable an alternative proof for the wall-crossing formula of Chanda. Moreover, the technique might give rise to wall-crossing type statements for the ``almost toric manipulations'' discussed above.

\subsection{Outline}
In \Cref{section:stratified_differential_spaces}, we give detailed background on the geometry of stratified differential spaces.
In \Cref{section:extending_bundle_iso_to_diffeo}, we prove \Cref{thm:main_diffeomorphism}.
In \Cref{section:moser_trick_proof}, we prove \Cref{thm:moser_trick,thm:main_symplectic}.
In \Cref{section:coisotropic_theorem}, we prove \Cref{thm:main_coisotropic}. The proof uses a technical Lemma on Euler-like vector fields, proved in \Cref{appendix:euler_likes}.
In \Cref{section:exactness_for_isotropic}, we prove that the symplectic form is exact near isotropic stratified subspaces.
In \Cref{section:explicit_examples}, we prove that the hypotheses of \Cref{thm:main_coisotropic} hold for two examples of singular Lagrangian subspaces.
In \Cref{appendix:strong_deformation_retraction_moser}, we address Moser's trick with a strong deformation retraction, see \Cref{remark:mosers_trick_discussion}.

\subsection{Acknowledgements}
We thank Rei Henigman for useful comments on the introduction, Jo\'e Brendel for useful conversations, and Soham Chanda for pointing out useful references to us. We also thank the anonymous referee for detailed and useful comments.
Y.K. is partly supported by the United States -- Israel Binational Science Foundation Grant 2021730.
S. T. and Y. Z. are partly supported by the Israel Science Foundation Grant 2793/21.

\section{Stratified differential spaces}\label{section:stratified_differential_spaces}

\subsection{Differential spaces}

We recall some definitions; for more details see \cite{vector_fields_subcartesian, lusala_sniatycki_2011, sniatycki_book_2013}.

\begin{definition}
    Let $A$ be a topological space. A \textbf{differential structure} on $A$ is a non-empty set $C^\infty(A)$ of continuous functions $A\to \R$ satisfying
    \begin{enumerate}
        \item\label{condition:differential_subbasis} $C^\infty(A)$ induces the given topology on $A$;
        \item for all $F:\R^n\to \R$ smooth and $f_1,\ldots f_n\in C^\infty(A)$, the composition $F(f_1,\ldots,f_n)$ is in $C^\infty(A)$;
        \item if $f:A \to \R$ is a continuous function such that for all $a\in A$ there exist a neighbourhood $U_a$ and $g_a\in C^\infty(A)$ with $f\big\lvert_{U_a} = g_a\big\lvert_{U_a}$, then $f\in C^\infty(A)$.
\end{enumerate}

    A topological space with a differential structure is a \textbf{differential space}.
\end{definition}

\begin{definition}
    Let $A$ be a topological space and let $\mathcal{F}$ be a non-empty family of continuous functions $A\to \R$ that induces the given topology on $A$.
    The \textbf{differential structure generated by $\mathbf{\mathcal{F}}$} consists of those continuous functions $g:A\to \R$ such that for every $p\in A$, there exist a neighbourhood $U$ of $p$, a smooth function $F\in C^\infty\left(\R^n\right)$, and $f_1,\ldots,f_n \in \mathcal{F}$ such that
    \begin{equation*}
        g\Big\lvert_U = {F(f_1, \ldots, f_n)}\Big\lvert_U.
    \end{equation*}
\end{definition}

\begin{definition}
    Let $A,B$ be differential spaces. A continuous function $\varphi:A\to B$ is \textbf{smooth} if for all $f\in C^\infty(B)$, the pullback $\varphi^*f:A\to \R$ satisfies $\varphi^*f\in C^\infty(A)$.
    It is a \textbf{diffeomorphism} if it is smooth, bijective, and its inverse is smooth.

    We denote by $C^\infty(A,B)$ the set of all smooth functions from $A$ to $B$.
\end{definition}

\begin{definition}
    Let $B$ be a differential space and $A\subset B$ a subset. The \textbf{subspace differential structure} on $A$ (with its subspace topology) is the differential structure generated by restrictions to $A$ of functions in $C^\infty(B)$. We say that $A$ is a \textbf{differential subspace of~$\mathbf{B}$}.
\end{definition}

\begin{definition}
    A map $A\to B$ is a \textbf{smooth embedding} if it is a diffeomorphism of $A$ with a differential subspace of $B$.
\end{definition}

Throughout this paper, we implicitly consider manifolds as differential spaces, and consider their subsets as differential subspaces.

\begin{definition}
    Let $A$ be a differential space and let $p\in A$. A \textbf{derivation at $\mathbf{p}$} is a linear map $v:C^\infty(A)\to \R$ satisfying
    \begin{equation*}
        v(fg) = f(p)v(g) + g(p)v(f).
    \end{equation*}
\end{definition}

\begin{definition}
    The set of all derivations at $p$ is called the \textbf{Zariski tangent of $A$ at $p$} and denoted by $T^Z_pA$. It is a vector space over $\R$.
\end{definition}

Write
\begin{equation*}
    T^ZA \coloneqq \bigcup_{p\in A} T^Z_pA.
\end{equation*}
We endow $T^ZA$ with a differential structure as follows. Let $\tau_A:T^ZA\to A$ be the projection
\begin{equation*}
    \tau_A(v) = p, \qquad \forall v\in T^Z_pA
\end{equation*}
and for all $f\in C^\infty(A)$ define $df:T^ZA\to \R$ by
\begin{equation*}
    df(v) = v(f).
\end{equation*}

\begin{definition}[{\cite[Definition~3.3.2]{sniatycki_book_2013}}]\label{definition:Zariski_tangent}
    The \textbf{Zariski tangent $T^ZA$ of $A$} is the differential space given by the set $T^ZA$ with the topology and the differential structure generated by
    \begin{equation*}
        \{f\circ \tau_A:\: f\in C^\infty(A)\}\ \cup\ \{df:\: f\in C^\infty(A)\}.
    \end{equation*}
\end{definition}

\begin{definition}
    Let $A,B$ be differential spaces, $\varphi:A\to B$ a smooth map, and $p\in A$.
    The \textbf{differential} $D^Z_p\varphi:T^Z_pA\to T^Z_{\varphi(p)}B$ of $\varphi$ at $p$ is defined by
    \begin{equation*}
        {D^Z_p\varphi}(v)(f) = v(f\circ \varphi), \qquad \forall v\in T^Z_pA,\, f\in C^\infty(B).
    \end{equation*}
    It is a linear map.
\end{definition}

\begin{proposition}[{\cite[Proposition~3.3.7]{sniatycki_book_2013}}]
    The differential $D^Z\varphi:T^ZA \to T^ZB$ of $\varphi$ is a smooth map, and the following diagram commutes
    \[\begin{tikzcd}[ampersand replacement=\&]
        {T^ZA} \& {T^ZB} \\
        A \& B
        \arrow["D^Z\varphi"', from=1-1, to=1-2]
        \arrow["{\tau_A}", from=1-1, to=2-1]
        \arrow["{\tau_B}"', from=1-2, to=2-2]
        \arrow["\varphi", from=2-1, to=2-2]
    \end{tikzcd}\]
\end{proposition}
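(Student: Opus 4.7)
The plan is to split the proposition into its two assertions (commutativity of the square and smoothness of $D^Z\varphi$) and handle them separately. Commutativity will fall out by unwinding definitions; smoothness will follow from checking only the generators of $C^\infty(T^ZB)$ given in \Cref{definition:Zariski_tangent}.

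For the square, I would observe that for any $v \in T^Z_pA$, the derivation $D^Z_p\varphi(v)$ lives in $T^Z_{\varphi(p)}B$ by construction, so
\begin{equation*}
    \tau_B(D^Z\varphi(v)) = \varphi(p) = \varphi(\tau_A(v)),
\end{equation*}
giving $\tau_B \circ D^Z\varphi = \varphi \circ \tau_A$ as required. This also shows continuity of $D^Z\varphi$ once we have the smoothness computations below, because the differential structure on $T^ZB$ generates its topology.

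For smoothness, the key reduction is that the subset
\begin{equation*}
    \mathcal{F} := \{h \in C^\infty(T^ZB) : h \circ D^Z\varphi \in C^\infty(T^ZA)\}
\end{equation*}
is closed under composition with smooth functions $\R^n \to \R$ and under the local identification axiom, so if $\mathcal{F}$ contains a generating set for $C^\infty(T^ZB)$, then $\mathcal{F} = C^\infty(T^ZB)$. I would then check the two families of generators. For $f \in C^\infty(B)$, using the commutativity of the square,
\begin{equation*}
    (f \circ \tau_B) \circ D^Z\varphi = f \circ \varphi \circ \tau_A = (\varphi^*f) \circ \tau_A,
\end{equation*}
which lies in the generating set of $C^\infty(T^ZA)$ since $\varphi^*f \in C^\infty(A)$ by smoothness of $\varphi$. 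For the second family, evaluating at $v \in T^Z_pA$ gives
\begin{equation*}
    (df \circ D^Z\varphi)(v) = D^Z_p\varphi(v)(f) = v(f \circ \varphi) = d(\varphi^*f)(v),
\end{equation*}
so $df \circ D^Z\varphi = d(\varphi^*f)$, which again lies in the generating set. This completes the verification.

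I expect the only mildly subtle point to be the reduction to generators, since the differential structure is defined as the one \emph{generated by} those functions rather than explicitly as all of them. The verification that $\mathcal{F}$ is stable under the differential-structure closure operations is routine: stability under composition with smooth $F:\R^n \to \R$ is immediate because pullback commutes with such composition, and stability under the local identification axiom follows from the same property on the target. Once this reduction is in hand, the calculations above dispose of the proposition.
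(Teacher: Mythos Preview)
The paper does not supply its own proof of this proposition; it is stated as a citation to \cite[Proposition~3.3.7]{sniatycki_book_2013} and left unproved. So there is no in-paper argument to compare against directly.

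That said, your proof is correct and is exactly the natural approach. The commutativity of the square is a one-line unwinding, and the smoothness argument via generators is the standard pattern for differential spaces. Your two computations $(f\circ\tau_B)\circ D^Z\varphi = (\varphi^*f)\circ\tau_A$ and $df\circ D^Z\varphi = d(\varphi^*f)$ are clean and match precisely the style of the very next proof in the paper (\Cref{proposition:tangent_space_embedding}), where the same generator-checking technique is used to show $(D^Zi)^*$ is onto. Your handling of the reduction-to-generators step is also sound: establishing pullback of generators first yields continuity (since those generators determine the topology), which then lets you close $\mathcal{F}$ under the local identification axiom without circularity.
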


\begin{proposition}\label{proposition:tangent_space_embedding}
    Let $i:A\to B$ be an embedding of differential spaces. Then the smooth map $D^Zi:T^ZA \to T^ZB$ is also an embedding of differential spaces.
\end{proposition}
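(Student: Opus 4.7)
The plan is to verify the three defining conditions of a smooth embedding for $D^Zi$: smoothness, injectivity, and surjectivity of the pullback $(D^Zi)^*$. Smoothness is the content of the preceding proposition, so only the latter two require work.

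For injectivity, I would suppose $D^Zi(v)=D^Zi(w)$. Projecting via $\tau_B$ and using the commuting square $\tau_B\circ D^Zi=i\circ\tau_A$ from the preceding proposition together with the injectivity of $i$, both $v$ and $w$ lie in a common $T^Z_pA$. For any $f\in C^\infty(A)$, the surjectivity of $i^*$ supplies $\tilde f\in C^\infty(B)$ with $f=\tilde f\circ i$, and then
\begin{equation*}
v(f)=v(\tilde f\circ i)=(D^Zi\,v)(\tilde f)=(D^Zi\,w)(\tilde f)=w(\tilde f\circ i)=w(f),
\end{equation*}
forcing $v=w$.

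For surjectivity of $(D^Zi)^*$, the central calculation is that each generator of $C^\infty(T^ZA)$ is itself a pullback via $D^Zi$. Writing $f=\tilde f\circ i$, the commuting square gives $f\circ\tau_A=(\tilde f\circ\tau_B)\circ D^Zi$, and unravelling the definition of $D^Zi$ gives $df=d\tilde f\circ D^Zi$. Since the preimages of sub-basic opens in $T^ZB$ under these pullbacks already form a sub-basis of the topology of $T^ZA$, $D^Zi$ is simultaneously a topological embedding. An arbitrary $g\in C^\infty(T^ZA)$ is locally a smooth composite of generators, hence locally a pullback of some $G_v\in C^\infty(T^ZB)$ defined near $D^Zi(v)$.

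The hard part will be combining the local choices $G_v$ into a single $G\in C^\infty(T^ZB)$ with $g=G\circ D^Zi$. This step uses the locality axiom in $C^\infty(T^ZB)$ together with the topological-embedding property just established, so that local pullbacks which agree on $D^Zi(T^ZA)$ can be assembled into a globally defined smooth function on $T^ZB$ whose restriction to the image recovers $g$. This global-assembly step is the technical heart of the proposition.
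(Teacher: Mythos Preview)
Your injectivity argument and the key surjectivity calculation---showing that each generator $\tilde f\circ\tau_A$ and $d\tilde f$ of $C^\infty(T^ZA)$ arises as a pullback via $D^Zi$---match the paper's proof. Where you diverge is in flagging a subsequent ``global-assembly'' step as the technical heart.

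The paper's proof simply stops once the generators lie in the image of $(D^Zi)^*$, and that is already enough. Once $D^Zi$ is known to be injective, the image $(D^Zi)^*\bigl(C^\infty(T^ZB)\bigr)$ is nothing other than the induced subspace differential structure on $T^ZA$ coming from $T^ZB$; by the background theory the paper has just recalled, this \emph{is} a differential structure on $T^ZA$---in particular it already satisfies the locality axiom. Since $C^\infty(T^ZA)$ is by construction the \emph{smallest} differential structure containing the generating family, and that family sits inside the image, one gets $C^\infty(T^ZA)\subset(D^Zi)^*\bigl(C^\infty(T^ZB)\bigr)$ immediately; smoothness of $D^Zi$ supplies the reverse inclusion. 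So no explicit gluing is carried out in the paper: the patching you anticipate is exactly the content of the general fact that induced subspace structures satisfy locality, and that belongs to the background references rather than to this particular proof. Your topological-embedding observation is correct and is what makes the two candidate topologies on $T^ZA$ coincide, but it too is absorbed into the same general machinery rather than being an additional hurdle here.
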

\begin{proof}
    We need to show that $D^Zi$ is injective and that the family ${(D^Zi)}^*\left(C^\infty(T^ZB)\right)$ generates $C^\infty(T^ZA)$.

    For injectivity, it is enough to show that $D^Z_pi$ has a trivial kernel at every $p\in A$. Let $v\in T^Z_pA$ with $D^Z_pi(v)=0\in T^Z_{i(p)}B$. Then for all $f\in C^\infty(B)$ we have
    \begin{equation*}
        0 = D^Z_pi(v)(f) = v(f\circ i).
    \end{equation*}
    Since $C^\infty(A)$ is generated by functions of the form $\tilde{f} = f\circ i$ for $f\in C^\infty(B)$, we get $v=0$.

    Recall that $C^\infty(T^ZA)$ is generated by the functions $\tilde{f}\circ \tau_A$ and $d\tilde{f}$ for $\tilde{f}\in C^\infty(A)$. Since $C^\infty(A)$ is generated by the functions $\tilde{f} = f\circ i$ for $f\in C^\infty(B)$, it is enough to show that the image of ${\left(D^Zi\right)}^*$ contains $f\circ i \circ \tau_A$ and $d(f\circ i)$ for all $f\in C^\infty(B)$. Indeed,
    \begin{equation*}
        f\circ i \circ \tau_A = f\circ \tau_B \circ D^Zi = {(D^Zi)}^*(f\circ \tau_B)
    \end{equation*}
    and
    \begin{align*}
        d(f\circ i)(v)
        =& v(f\circ i) \\
        =& D^Zi(v)(f) \\
        =& df\left((D^Zi)(v)\right) \\
        =& {(D^Zi)}^*(df)(v).
    \end{align*}
\end{proof}

\begin{remark}
    Let $A_i\subset M_i,\,i=0,1$ with $M_i$ manifolds and $A_i$ differential subspaces. By \Cref{proposition:tangent_space_embedding}, we can view $T^ZA_i\subset TM_i\Big\lvert_{A_i}\subset TM_i$ as differential subspaces of the tangent bundle. For a smooth map $g:A_0\to A_1$, its differential $D^Zg:T^ZA_0\to T^ZA_1$ is then a fiberwise-linear smooth map of subspaces of $TM_i$.
\end{remark}

\begin{definition}[{\cite[Definition~5.2.1]{sniatycki_book_2013}}]\label{definition:zariski_forms}
    Let $A$ be a differential space. A \textbf{Zariski $\mathbf{k}$-form on $A$} is an alternating $k$-linear smooth map from $T^ZA$ to $\R$.
\end{definition}

Zariski forms admit pullbacks by smooth maps, defined in the same way as pullbacks of forms for manifolds. See \cite[Equation~(5.6)]{sniatycki_book_2013}. For a manifold $M$ and a differential subspace $A\subset M$  that is closed in $M$, the Zariski forms on $A$ are exactly the pullbacks of forms on $M$ by the inclusion \cite[Proposition~5.2.2]{sniatycki_book_2013}.

\subsection{Stratified differential spaces}

For a subset $A\subset B$ of a topological space, we denote its closure by $\overline{A}$.

\begin{definition}\label{definition:stratified_space}
    Let $A$ be a differential space. A \textbf{stratification} of $A$ is a decomposition
    \begin{equation*}
        A = \bigsqcup_{X\in \S}X,
    \end{equation*}
    whose elements are called \textbf{strata},
    with the following properties.
    \begin{enumerate}
        \item Each $X\in \S$, with its subspace differential structure, is a smooth manifold.
        \item The decomposition is locally finite.
        \item \emph{Condition of the frontier} --- for $X,Y\in\S$, if $X\cap \overline{Y} \neq \emptyset$ then $X\subset \overline{Y}$. Equivalently:
        \begin{itemize}
            \item[(3')] The closure of each stratum is a union of strata.
        \end{itemize}
    \end{enumerate}
    A \textbf{stratified differential space} is the data $\left(A, \S\right)$.
\end{definition}

\begin{definition}
    Let $X,Y\in\S$ and define the order relation by setting $X<Y$ when $X\subset \overline{Y}$ and $X\neq Y$. We say $X,Y$ are \textbf{comparable} when $X<Y$ or $Y<X$.
\end{definition}

\begin{proposition}\label{proposition:induced_decomposition_is_stratification}
    Let $\left(A, \S\right)$ be a stratified differential space and let $U\subset A$ be an open subset. Let
    \begin{equation*}
        S_U = \{X\cap U: \forall X\in \S \text{ with }X\cap U\neq \emptyset\}.
    \end{equation*}
    Then $\left(U, S_U\right)$ is a stratified differential space.
\end{proposition}
\begin{proof}
    We prove that the condition of the frontier holds, the other conditions are immediate. Let $Y\in \S$ with $Y\cap U\neq \emptyset$. Let $X\in \S$, and assume that $X\cap U$ intersects the closure of $Y\cap U$ in $U$. By the condition of the frontier in $A$, we get $X\subset \overline{Y}$, where the closure is taken in $A$. For any $p\in X\cap U$ and any neighbourhood $W$ of $p$ in $U$, since $U$ is open in $A$, the subset $W$ is also open in $A$ and therefore intersects $Y$. Since $W\subset U$, it intersects $Y\cap U$. It follows that $p$ is in the closure of $Y\cap U$ in $U$.
\end{proof}

\begin{definition}\label{definition:induced_stratification}
    With the notation of \Cref{proposition:induced_decomposition_is_stratification}, the stratification $S_U$ of $U$ is \textbf{the induced stratification of $\mathbf{U\subset A}$}.
\end{definition}

\begin{definition}
    Let $\left(A_i, \S_i\right),\,i=0,1$ be stratified differential spaces. A \textbf{stratified diffeomorphism} between them consists of
    \begin{enumerate}
        \item a bijection $s:\S_0\to \S_1$ and
        \item a diffeomorphism $g:A_0\to A_1$,
    \end{enumerate}
    such that $g$ restricted to every $X_0\in\S_0$ is a diffeomorphism onto $s(X_0)$.
\end{definition}

\begin{definition}
    Let $M$ be a manifold. A \textbf{stratified subspace} of $M$ is a closed subspace of $A\subset M$ and a stratification $\S$ of it.
\end{definition}

One often adds additional regularity assumptions to the definition of stratified subspaces of manifolds, such as Whitney~(B) regularity (see \cite{mather2012notes}). A desirable property given by such regularity assumptions is that $X<Y$ implies $\dim{X}<\dim{Y}$. We use a stronger regularity condition, given in \Cref{definition:smoothly_locally_trivial}, which implies Whitney~(B) and therefore also the monotonicity of dimension.

\begin{notation}\label{notation:manifold_dimension}
    If $N$ is a manifold, the notation $N^k$ means that $\dim{N} = k$.
\end{notation}

\begin{definition}
    A subset of a vector space is \textbf{conical} if it is invariant under multiplication by $t\in (0,1]$. A subset of a vector bundle is conical if its intersection with each fiber is conical.
\end{definition}

\begin{definition}[{\cite[Definition~3.6]{zimhony2024commutative}}]\label{definition:smoothly_locally_trivial}
    Let $M^n$ be a manifold.
    A stratified subspace $(A, \S)$ of $M^n$ is \textbf{smoothly locally trivial with conical fibers} if the following holds for each $X^k\in \S$ and $p\in X$.

    There exist a neighbourhood $U_p\subset M$ of $p$, a neighbourhood $W_p\subset \R^k\times \R^{n-k}$ of the origin, and a diffeomorphism
    \begin{equation*}
        \theta_p:U_p\to W_p
    \end{equation*}
    such that
    \begin{enumerate}
        \item $\theta_p(X\cap U_p) = \left(\R^k\times \{0\}\right)\cap W_p$, and
        \item for each $Y\in\S$ with $X<Y$ there exists a conical subset $Y_p\subset \R^{n-k}\setminus\{0\}$ such that
        \begin{equation*}
            \theta_p(Y\cap U_p) = \left(\R^k \times Y_p\right) \cap W_p.
        \end{equation*}
    \end{enumerate}
    A \textbf{conical chart} of $p$ is such data $(U_p, W_p, \theta_p)$.
\end{definition}

\begin{remark}
    Let $(A, \S)$ be a stratified subspace of $M$, let $p\in A$, and let $(U_p, W_p, \theta_p)$ be a conical chart of $p\in A$. Then, identifying $T_p\left(\R^k \times \R^{n-k}\right)$ with $\R^k \times \R^{n-k}$, we have
    \begin{equation*}
        T^Z_pA = D\theta_p^{-1}\left(\R^k \times \vectorspan\left(\bigcup_{Y\in \S}Y_p\right)\right).
    \end{equation*}
\end{remark}

\begin{proposition}\label{proposition:also_whitney_b}
    If a stratified subspace $(A,\S)$ of $M$ is smoothly locally trivial with conical fibers, then it is Whitney~(B) regular \cite[Definition~2.3]{mather2012notes}.
\end{proposition}
\begin{proof}
    Let $X,Y\in \S$, let $p\in X$, and let $\theta_p:U_p\to W_p \subset \R^k\times \R^{n-k}$ be a conical chart of $p$. 
    Let
    $x_i \in \left(\R^k\times \{0\}\right)\cap W_p$
    and
    $y_i \in \left(\R^k \times Y_p\right) \cap W_p$
    be sequences of points converging to $0$, and assume that $T_{y_i}\left(\R^k \times Y_p\right)$ converge to a plane $F\subset T_0\R^n$ and the tangent spaces of the lines through $y_i$ and $x_i$ converge to a line $l\subset T_0\R^n$. We now prove that $l\subset F$.

    Denote by $\pi:\R^k \times \R^{n-k} \to \R^k$ the projection to $\R^k$. Since $Y_p$ is conical, the line through $y_i$ and $\pi(y_i)$ is contained in $\R^k \times Y_p$. Since the line through $\pi(y_i)$ and $x_i$ is contained in $\left(\R^k \times \{0\}\right)$, the line through $y_i$ and $x_i$ is contained in $T_{y_i}\left(\R^k \times Y_p\right)$. It follows that $l\subset F$, and therefore \cite[Definition~2.1]{mather2012notes} is satisfied in the chart $\theta_p:U_p\to W_p$.
\end{proof}

\begin{remark}
    A stratified subspace that is Whitney~(B) regular is \emph{topologically locally trivial} \cite[Corollary~10.6]{mather2012notes}, but not necessarily smoothly locally trivial. It is also Whitney~(A) regular \cite[Proposition~2.4]{mather2012notes}.
\end{remark}

\begin{notation}\label{notation:strata_skeleton}
    Let $(A,\S)$ be a stratified subspace of a manifold $M$. We write
    \begin{align*}
    \S^{<d} &\coloneq \{X\in \S: \dim{X}<d\}, \\
    A^{<d} &\coloneq \bigsqcup_{X\in\S^{<d}}X \\
    \end{align*}
    and similarly for $\S^d, \S^{\geq d}$ and $A^{\geq d}$. Also,
    \begin{equation*}
        M^{\geq d} = M \setminus A^{<d}.
    \end{equation*}
\end{notation}

Recall that we require a stratified subspace $A$ to be closed in $M$.

\begin{remark}\label{remark:closedness_of_strata}
    If $X<Y$ implies $\dim{X}<\dim{Y}$, as in the case of a Whitney~(B) regular stratified subspace, then:
    \begin{enumerate}
        \item $A^{\leq d}$ are closed in $A$;
        \item $A^{\geq d}$ are open in $A$;
        \item $A^{=d}$ and $A^{\geq d}$ are closed in $M^{\geq d}$ and in $A^{\geq d}$;
        \item $A^{\leq d}$ are closed in $M$.
    \end{enumerate}
\end{remark}

The following Lemmas will be used in \Cref{section:coisotropic_theorem}.

\begin{lemma}\label{lemma:stratified_spaces_locally_diffeo}
    For $i=0,1$, let $M_i$ be manifolds of dimension $n$ and $\left(A_i, \S_i\right)$ stratified subspaces of them, smoothly locally trivial with conical fibers. Let
    \begin{equation*}
        g:\left(A_0, \S_0\right) \to \left(A_1, \S_1\right)
    \end{equation*}
    be a stratified diffeomorphism, $p_0\in A_0$, and $p_1 \coloneq g\left(p_0\right)$.

    Then there exist an open set $W\subset \R^n$ and conical charts (\Cref{definition:smoothly_locally_trivial})
    \begin{equation*}
        \theta_{p_0}: U_{p_0} \to W, \quad \theta_{p_1}: U_{p_1} \to W,
    \end{equation*}
    of $p_0, p_1$, such that $\theta_{p_1}^{-1} \circ \theta_{p_0}$ restricts to $g$ on $A_0 \cap U_{p_0}$.
\end{lemma}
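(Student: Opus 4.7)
The strategy is to keep the conical neighbourhood of $p_0$ essentially fixed and build a new conical neighbourhood around $p_1$ by pulling back through an ambient local diffeomorphism that extends $g$ in coordinates. First, pick arbitrary conical neighbourhoods $\theta_0^{\mathrm{old}}\colon U_0^{\mathrm{old}}\to W_0^{\mathrm{old}}$ of $p_0$ and $\theta_1^{\mathrm{old}}\colon U_1^{\mathrm{old}}\to W_1^{\mathrm{old}}$ of $p_1$, normalized so that both base points map to $0\in\R^n$; after shrinking $U_0^{\mathrm{old}}$ so that $g(A_0\cap U_0^{\mathrm{old}})\subset U_1^{\mathrm{old}}$, form the coordinate representative $\tilde g := \theta_1^{\mathrm{old}}\circ g\circ(\theta_0^{\mathrm{old}})^{-1}$. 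This is a stratified diffeomorphism between closed stratified subsets $\tilde A_0,\tilde A_1$ of open subsets of $\R^n$; since the strata containing $p_0$ and $p_1$ share a common dimension $k$, both conical neighbourhoods are adapted to the splitting $\R^n=\R^k\times\R^{n-k}$.

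The next step is to use \Cref{lemma:whitney_extension} to extend $\tilde g$ to a smooth map $\tilde\Phi\colon W\to\R^n$ on a neighbourhood $W\subset W_0^{\mathrm{old}}$ of $0$, arranging that $D\tilde\Phi(0)$ is invertible so that $\tilde\Phi$ is a local diffeomorphism. The lemma requires a fiber-wise linear map $G\colon T\R^n\big\lvert_{\tilde A_0}\to T\R^n$ extending $D^Z\tilde g$. At $0$, the map $D^Z\tilde g(0)\colon T^Z_0\tilde A_0\to T^Z_0\tilde A_1$ is an isomorphism between subspaces of $T_0\R^n$ of the same dimension, which extends to an invertible $G(0)\colon T_0\R^n\to T_0\R^n$ by choosing complements on both sides and any linear identification between them. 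To promote this pointwise choice to a smooth fiber-wise linear $G$ over all of $\tilde A_0$, take smooth ambient extensions $F_i\colon W_0^{\mathrm{old}}\to\R$ of the coordinate components $\tilde g_i$ (available since each $\tilde g_i$ is smooth in the sense of differential spaces) and modify them by smooth functions vanishing on $\tilde A_0$ whose derivatives along a complement of $T^Z_0\tilde A_0$ at $0$ take any prescribed values; the smoothly-locally-trivial-with-conical-fibers structure supplies enough freedom. Then set $G := D(F_1,\ldots,F_n)\big\lvert_{\tilde A_0}$, invoke the lemma, and shrink $W$ so that $\tilde\Phi$ is a diffeomorphism onto its image.

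A final shrinking aligns the stratified subspaces: since $\tilde g(\tilde A_0\cap W)$ is open in $\tilde A_1$, it equals $\tilde A_1\cap V$ for some open $V\subset\R^n$; replacing $W$ by $\tilde\Phi^{-1}(V\cap\tilde\Phi(W))$ and setting $W_1'':=\tilde\Phi(W)$ gives $\tilde\Phi(\tilde A_0\cap W)=\tilde A_1\cap W_1''$, so $\tilde\Phi^{-1}(\tilde A_1\cap W_1'')=\tilde A_0\cap W$. Define the sought conical neighbourhoods as $\theta_{p_0}:=\theta_0^{\mathrm{old}}\big\lvert_{(\theta_0^{\mathrm{old}})^{-1}(W)}$ and $\theta_{p_1}:=\tilde\Phi^{-1}\circ\theta_1^{\mathrm{old}}\big\lvert_{(\theta_1^{\mathrm{old}})^{-1}(W_1'')}$, both with image $W$. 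The identity $\theta_{p_1}^{-1}\circ\theta_{p_0}=g$ on $A_0$ is immediate from $\tilde\Phi\big\lvert_{\tilde A_0}=\tilde g$. For the conical structure of $\theta_{p_1}$: for any stratum $Y'\in\S_1$ above $X_1$ with preimage $Y\in\S_0$, the image $\theta_{p_1}(Y'\cap U_{p_1})=\tilde\Phi^{-1}(\theta_1^{\mathrm{old}}(Y'\cap U_{p_1}))$ lies in $\tilde\Phi^{-1}(\tilde A_1\cap W_1'')=\tilde A_0\cap W$, where $\tilde\Phi^{-1}$ coincides with $\tilde g^{-1}$, so it equals $(\R^k\times Y_{p_0})\cap W$ with $Y_{p_0}$ the conical fiber of $Y$ under $\theta_0^{\mathrm{old}}$; the stratum $X_1$ itself is handled analogously with fiber $\{0\}$. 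The main obstacle will be the construction of the smooth fiber-wise linear $G$ with the required invertibility at $0$, since the Zariski tangent $T^Z_p\tilde A_0$ jumps in dimension as $p$ traverses different strata of $\tilde A_0$.
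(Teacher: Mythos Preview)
Your approach is essentially the same idea as the paper's, but you take an unnecessary detour through \Cref{lemma:whitney_extension}. Once you have constructed ambient extensions $F_1,\dots,F_n$ of the components of $\tilde g$ and modified them by functions vanishing on $\tilde A_0$ so that $D(F_1,\dots,F_n)(0)$ is invertible, the map $\tilde\Phi\coloneqq(F_1,\dots,F_n)$ is already the local diffeomorphism you want; defining $G\coloneqq D\tilde\Phi\big\lvert_{\tilde A_0}$ and then invoking Whitney to recover an extension with that differential gives you nothing beyond $\tilde\Phi$ itself.

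The paper's argument is the same at its core but cleaner in organization. Because the conical structure forces $\tilde A_0\subset\vectorspan\{\tilde A_0\}=T^Z_0\tilde A_0$ (locally), it suffices to extend $\tilde g$ first as a map $\vectorspan\{\tilde A_0\}\to\vectorspan\{\tilde A_1\}$ between linear subspaces of the same dimension, observe via the inverse function theorem that this extension is a local diffeomorphism, and then simply add linearly independent linear terms supported in a complement of $\vectorspan\{\tilde A_0\}$. This is exactly your modification step, but isolated so that the construction of a global $G$ along $\tilde A_0$ never arises.

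In particular, the ``main obstacle'' you flag---that $T^Z_p\tilde A_0$ jumps in dimension---is a red herring: you only need $D\tilde\Phi$ to be invertible at the single point $0$, not along all of $\tilde A_0$, and the linear functions vanishing on $\vectorspan\{\tilde A_0\}$ (hence on $\tilde A_0$) give precisely the freedom needed there.
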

\begin{proof}
    Let $\theta_{p_i}':U_{p_i} \to W_{p_i}$ be conical charts and $A_{p_i} \coloneq \theta_{p_i}'\left( A_i\cap U_{p_i}\right)$. Take a local extension
    \begin{equation*}
        g':\vectorspan \{ A_{p_0}\} \cap W_{p_0} \to \vectorspan \{ A_{p_1}\} \cap W_{p_1}
    \end{equation*}
    of $g$. Since $A_{p_i}$ is conical,  $\dim{T^Z_{p_i}A_i} = \dim{\vectorspan \{ A_{p_i}\}}$. Using the fact that $g$ is a diffeomorphism and the inverse function theorem, there exists a small neighbourhood of $0$ in $\vectorspan \{ A_{p_0}\} \cap W_{p_0}$ such that $g'$ restricted to this neighbourhood is a diffeomorphism. We can extend $g'$ to a diffeomorphism \begin{equation*}
        \widetilde{g}:W_{p_0} \to W_{p_1}
    \end{equation*}
    by adding linearly independent linear terms, vanishing on $\vectorspan \{ A_{p_0}\}$, and with target \linebreak
    ${\left(\vectorspan \{ A_{p_1}\}\right)^\perp}$, and possibly shrinking the neighbourhoods. After possibly shrinking the neighbourhoods again, the data ${W = W_{p_1}}$, ${\theta_{p_1} = \theta_{p_1}'}$ and $\theta_{p_0} = \widetilde{g} \circ \theta_{p_0}'$ satisfy the required properties.
\end{proof}

\begin{lemma}\label{lemma:tangential_vector_field_stratified_diffeo}
    Let $(A,\S)$ be a stratified subspace of a manifold $M$, let $\E_t\in \mathfrak{X}(M)$ be a time-dependent vector field, and let $t_0 > 0$. Assume that $A$ is contained in the domain and the range of the time-$t_0$ flow of $\E_t$. Assume that ${X<Y}$ implies ${\dim{X}<\dim{Y}}$, and that for all $X\in \S$ and $p\in X$ we have $\E_t(p)\in T_pX$.
    Then the time-$t_0$ flow $\Phi^{t_0}_{(\E_t)}$ restricts to a stratified diffeomorphism $A \to A$.
\end{lemma}
\begin{proof}
    We prove this by ascending induction on the dimension of strata. For dimension $0$, the subspace $A^0$ is a submanifold of $M^{\geq 0} = M$ that is closed in it. By assumption, for all $X^0\in \S^0$ and $p\in X^0$ we have $\E(p)\in T_pX^0=\{0\}$. It follows that $\Phi^{t_0}_{(\E_t)}$ fixes each $X^0\in \S^0$, and therefore fixes $A^0$.

    For dimension $d$, assume that $\Phi^{t_0}_{(\E_t)}$ restricts to a diffeomorphism $A^{\leq (d-1)} \to A^{\leq (d-1)}$, and therefore (since it bijective) sends $M \setminus A^{\leq (d-1)} = M^{\geq d}$ to itself. The subspace $A^d$ is a submanifold of $M^{\geq d}$ that is closed in it, and by assumption for all $X^d\in \S^d$ and $p\in X^d$ we have $\E(p)\in T_pX^d$. It follows that $\Phi^{t_0}_{(\E_t)}$ preserves each $X^d$, therefore preserves $A^d$, and thus preserves $A^{\leq d}$.
\end{proof}

\subsection{Isotropic and coisotropic stratified subspaces}
\begin{definition}\label{definition:isotropic_and_coisotropic}
    A stratified subspace $(A, \S)$ of a symplectic manifold $(M, \w)$ is \textbf{isotropic} if every $X\in \S$ is isotropic.
    It is \textbf{coisotropic} if for each $p\in A$, the Zariski tangent space $T^Z_pA \subset T_pM$ is coisotropic.
    It is \textbf{Lagrangian} if it is isotropic and coisotropic.
\end{definition}

\begin{remark}
    Note that, for a stratfied subspace $(A,\S)$ of $(M,\w)$, the isotropic condition is imposed stratum-by-stratum, while the coisotropic condition is imposed using the Zariski tangent $T_p^ZA$ of the whole stratified subspace, which sees nearby strata. There are two reasons for this asymmetry: important examples and the implication of each condition.

    Regarding important examples, consider Example \eqref{example:nodel_fiber} from \Cref{subsubsection:existing_normal_forms} --- a Lagrangian 2-sphere with one positive (transversal) self-intersection. 
    A natural stratification of this subspace is the partition to the self-intersecting point and the rest.
    With this stratification, each stratum is isotropic, but the stratum consisting of the self-intersecting point is not Lagrangian, because it is a single point. Furthermore, each Zariski tangent is coisotropic, but the Zariski tangent at the self-intersecting point is not Lagrangian, because it is all of $T_pM$.
    The same issue arises in clean intersections of Lagrangian submanifolds.

    Regarding the implications, analogously to the case where $A$ is a submanifold, we want the geometry of a coisotropic subspace $A\subset M$ to determine $TM\Big\lvert_A$ as a symplectic vector bundle, and we want the symplectic form to be exact in a neighbourhood of an isotropic subspace.
    For the coisotropic case, this happens exactly when $T^Z_pA \subset T_pM$ is coisotropic for each $p\in A$.
    For the isotropic case, the isotropic strata condition implies that for every smooth $r:N \to M$ with $r(N) \subset A$, we have $r^*\w = 0$. Given a neighbourhood $U$ and a smooth weak deformation retraction $r:U \to A$, the desired property follows as in the proof of \Cref{thm:exactness_of_symplectic_form}.
\end{remark}

\begin{lemma}\label{lemma:lagrangian_stratified_equivalence}
    Let $(M, \w)$ be a symplectic manifold and $(A, \S)$ a stratified subspace of it, smoothly locally trivial with conical fibers. Then $(A, \S)$ is Lagrangian if and only if the union of the Lagrangian strata is dense in $A$.
\end{lemma}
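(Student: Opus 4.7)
My plan uses the conical neighbourhoods from \Cref{def:smoothly_locally_trivial} to reduce both implications to local checks at each $p \in X \in \S$, with the backward direction hinging on the construction of a Lagrangian subspace $L \subset T_p M$ sandwiched as $T_p X \subset L \subset T^Z_p A$. Such an $L$ makes $T_p X$ isotropic (as a subspace of a Lagrangian) and $T^Z_p A$ coisotropic, since ${(T^Z_p A)}^{\w} \subset L^{\w} = L \subset T^Z_p A$.

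For the forward direction, I would first show that the union of \emph{maximal} strata --- those $X \in \S$ admitting no $Y$ with $X < Y$ --- is dense in $A$. Starting from any $p \in X$, iteratively choose $X = X_0 < X_1 < \cdots$; local finiteness of $\S$ terminates the chain at a maximal $X_N$ whose points accumulate to $p$. At a point of a maximal stratum $X$, the condition of the frontier forces every stratum $Z$ meeting a small conical neighbourhood $U$ of $p$ to satisfy $X \le Z$, and maximality forces $Z = X$; hence $A \cap U = X \cap U$ and $T^Z_p A = T_p X$. If $A$ is Lagrangian, $T_p X$ is then both isotropic and coisotropic, so $X$ is a Lagrangian submanifold, and density of maximal strata yields density of Lagrangian strata.

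For the backward direction, fix $p \in X$ and use density of Lagrangian strata together with local finiteness of $\S$ to find a Lagrangian stratum $Y$ with $p \in \overline{Y}$; the condition of the frontier gives $X \le Y$. If $X = Y$, take $L := T_p X$. If $X < Y$, work in a conical chart $\theta \colon U \to W$ with $\theta(X \cap U) = \R^k \times \{0\}$ and $\theta(Y \cap U) = \R^k \times C_Y$, pick any $c \in C_Y$, and set $L := {(D_p \theta)}^{-1}(\R^k \oplus T_c C_Y)$. Conical invariance of $C_Y$ gives $\R^k \oplus T_c C_Y = T_{(0,tc)}(\R^k \times C_Y)$ for every $t \in (0,1]$, so in the chart $L$ is the $t \to 0^+$ limit of Lagrangian tangent spaces to $Y$; closedness of the Lagrangian Grassmannian then makes $L$ Lagrangian in $T_p M$.

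The main technical hurdle I expect is the inclusion $L \subset T^Z_p A$ when $X < Y$, which I would reduce to the identity $T^Z_0(C_Y \cup \{0\}) = \vectorspan(C_Y)$ inside $\R^{n-k}$. The inclusion $\supset$ uses the conical rays $t \mapsto tc$ as smooth curves through the origin, realizing every $c \in C_Y$ as a derivation (and then their whole span by linearity); the inclusion $\subset$ uses that a derivation at $0$ must annihilate every linear functional vanishing on $C_Y$. Applying \Cref{proposition:tangent_space_embedding} to the inclusion $\{0\} \times (C_Y \cup \{0\}) \hookrightarrow \theta(A \cap U)$ then promotes this to $\{0\} \times \vectorspan(C_Y) \subset T^Z_0 \theta(A \cap U)$; combined with $\R^k \times \{0\} \subset T^Z_0 \theta(A \cap U)$ and $T_c C_Y \subset \vectorspan(C_Y)$ (a standard difference-quotient argument at the smooth point $c$), transport via $D_p \theta$ yields $L \subset T^Z_p A$.
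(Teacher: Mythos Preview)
Your proposal is correct and follows essentially the same route as the paper. Both directions hinge on the same two observations: maximal strata satisfy $T^Z_pA=T_pX$ (forward direction), and $T^Z_pA$ contains a Lagrangian plane arising as a limit of tangent spaces to a nearby Lagrangian stratum (backward direction). The paper's proof is terser---it invokes Whitney~(A) regularity for isotropy and asserts without construction that the local model furnishes a Lagrangian plane in $T^Z_pA$---whereas you spell out the explicit sandwich $T_pX\subset L\subset T^Z_pA$ via the conical chart and the identity $T^Z_0(C_Y\cup\{0\})=\vectorspan(C_Y)$; but the underlying argument is the same.
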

\begin{proof}
    Assume the union of the Lagrangian strata is dense in $A$. Whitney (A) regularity implies that every stratum is isotropic. For every $p\in A$, by the local model from \Cref{definition:smoothly_locally_trivial}, the Zariski tangent $T_p^ZA$ contains a Lagrangian plane and is therefore coisotropic.

    Assume $(A, \S)$ is isotropic and coisotropic. If the union of the Lagrangian strata is not dense in $A$, there exists a maximal $k$ and $X\in \S$ of dimension $k$ such that $X$ is not contained in the closure of any Lagrangian stratum. By the assumption that the strata are isotropic, maximality of $k$, and Whitney (B) regularity, there are no $Y\in \S$ with $X<Y$, and therefore for every $p\in X$ we have $T_p^ZA=T_pX$. Define $n\coloneq \frac{1}{2}\dim{M}$. Since $X$ is isotropic, ${k=\dim{X} \leq n}$. Since $X$ is not Lagrangian, $k<n$. We get a contradiction to the assumption that $T_p^ZA= T_pX$ is coisotropic.
\end{proof}

\begin{definition}\label{definition:strongly_coisotropic}
    A stratified subspace $(A, \S)$ of a symplectic manifold $(M, \w)$ is \textbf{strongly coisotropic} if it is coisotropic and for each non-closed stratum $X\in \S$ and $p\in X$ we have
    \begin{equation}\label{condition:strongly_coisotropic}
        {\left(T^Z_pA\right)}^{\w} \subset T_pX.
    \end{equation}
\end{definition}

\begin{example}
    Let $L_1, L_2$ be Lagrangian submanifolds of $(M, \w)$, intersecting cleanly along $N=L_1\cap L_2$. Let $A = L_1\cup L_2$ and $\S= \{N,\, L_1 \setminus N,\, L_2 \setminus N \}$. Then $(A, \S)$ satisfies Condition~\eqref{condition:strongly_coisotropic} along $N$.

    Indeed, for $p\in N$, the clean intersection assumption implies $T^Z_pA = T_pL_1 + T_pL_2$ and $T_pN = T_pL_1 \cap T_pL_2$. Therefore,
    \begin{equation*}
        {\left(T^Z_pA\right)}^{\w} = {\left(T_pL_1 + T_pL_2\right)}^{\w}
        = {\left(T_pL_1\right)}^{\w} \cap {\left(T_pL_2\right)}^{\w}
        = T_pL_1 \cap T_pL_2
        = T_pN.
    \end{equation*}
\end{example}

\begin{example}
    Let $A = \{x_1, x_2 \geq 0, y_1,y_2 = 0 \} \subset \left(\R^4, dx_1 \wedge dy_1 + dx_2\wedge dy_2\right)$ and
    \begin{equation*}
        \S = \Bigl\{\{0\}, \{x_1 > 0, x_2 =0, y_1,y_2 = 0\}, \{x_1 = 0, x_2 > 0, y_1,y_2 = 0\}, \{x_1, x_2 > 0, y_1,y_2 = 0\}\Bigr\}.
    \end{equation*}
    Then $(A, \S)$ is coisotropic but not strongly coisotropic --- the non-closed strata
    \begin{equation*}
        \{x_1 > 0, x_2 =0, y_1,y_2 = 0\}, \{x_1 = 0, x_2 > 0, y_1,y_2 = 0\}
    \end{equation*}
    do not satisfy the Condition~\eqref{condition:strongly_coisotropic}.
\end{example}

\begin{remark}\label{remark:depth_1_strongly_coisotropic}
    Let $(M,\w)$ be a symplectic manifold and let $(A, \S)$ be a coisotropic stratified subspace of it. Assume that every non-closed stratum is open in $A$. Then $(A, \S)$ is strongly coisotropic.

    Indeed, let $X$ be a non-closed stratum and let $p\in X$. Then $X$ is open in $A$ and therefore $T^Z_pA = T_pX$. The coisotropic assumption implies
    \begin{equation*}
        {\left(T^Z_pA\right)}^{\w} \subset T^Z_pA = T_pX.
    \end{equation*}
\end{remark}

\begin{remark}\label{remark:g_pullback_matters_on_singular_part}
    For $i=0,1$, let $(A_i,\S_i) \subset (M_i,\w_i)$ be stratified subspaces of symplectic manifolds, smoothly locally trivial with conical fibers. Assume that $A_i$ are Lagrangian, and let $g:A_0 \to A_1$ be a stratified diffeomorphism.
    Since $g^*\w_1 = \w_0 = 0$ on each Lagrangian stratum, the condition $g^*\w_1 = \w_0$ as Zariski forms on $A_0$ needs to be checked only at singular points of $A_0$.
\end{remark}

\section{Extending tangent bundle isomorphisms to diffeomorphisms}\label{section:extending_bundle_iso_to_diffeo}
\subsection{Extendable and locally extendable pairs}
For $i=0,1$, let $A_i \subset M_i$ be subspaces of manifolds, with the induced subspace differential structure. Let $g:A_0 \to A_1$ be a smooth map.
\begin{definition}
    A \textbf{tangent bundle morphism over $\mathbf{g}$} is a map
    \begin{equation*}
        G:TM_0\Big\lvert_{A_0} \to TM_1\Big\lvert_{A_1}
    \end{equation*}
    such that:
    \begin{enumerate}
        \item it is smooth as a map of differential subspaces $TM_i\Big\lvert_{A_i} \subset TM_i, \: i=0,1$;
        \item $G$ sends $T_{a_0}M_0$ linearly to $T_{g(a_0)}M_1$ for all $a_0 \in A_0$.
    \end{enumerate}
\end{definition}

\begin{definition}\label{definition:bundle_isomorphism_over_g}
    Let $G:TM_0\Big\lvert_{A_0} \to TM_1\Big\lvert_{A_1}$ be  a tangent bundle morphism over a diffeomorphism $g:A_0 \to A_1$. Then $G$ is a tangent bundle \textbf{isomorphism} over $g$ if the linear maps $G\Big\lvert_{T_{a_0}M_0}: T_{a_0}M_0 \to T_{g(a_0)}M_1$ are invertible for all $a_0 \in A_0$.

    Let $\w_0, \w_1$ be symplectic forms on $M_0, M_1$ respectively. Then $G$ is a \textbf{symplectic tangent bundle isomorphism} over $g$ if is a tangent bundle isomorphism over $g$ and $G^*\w_1  = \w_0$.
\end{definition}

\begin{remark}
    If $G:TM_0\Big\lvert_{A_0} \to TM_1\Big\lvert_{A_1}$ is a tangent bundle isomorphism over a diffeomorphism $g:A_0 \to A_1$, then it is a diffeomorphism $TM_0\Big\lvert_{A_0} \to TM_1\Big\lvert_{A_1}$.
\end{remark}

\begin{definition}\label{definition:extendable_pairs}
    Let $G$ be a tangent bundle morphism over $g$. An \textbf{extension of the pair $\mathbf{(g,G)}$} consists of a neighbourhood $U_0$ of $A_0$ and a smooth map
    \begin{equation*}
        \G:U_0 \to M_1
    \end{equation*}
    such that $\G\Big\lvert_{A_0} = g$ and $D\G = G$ along $A_0$, i.e., on $TM_0\Big\lvert_{A_0}$.

    The pair $(g,G)$ is \textbf{extendable} if it has an extension.

    The pair is \textbf{locally extendable} if around every $a_0 \in A_0$ there exists a neighbourhood $U_{a_0}$ of $a_0$ in $M_0$ such that the pair $(g\Big\lvert_{A_0 \cap U_{a_0}}, G\Big\lvert_{A_0 \cap U_{a_0}})$ is extendable.
\end{definition}
\begin{remark}\label{remark:locally_extendable_analog}
    Let $(g,G)$ be a locally extendable pair. Then necessarily
    \begin{equation}\label{equation:zariski_differential_agrees}
        G\Big\lvert_{T^ZA_0} = D^Zg.
    \end{equation}
    When $A_i$ are submanifolds, Condition~\eqref{equation:zariski_differential_agrees} is equivalent to local extendability of $(g,G)$. For general subspaces $A_i \subset M_i$, Condition~\eqref{equation:zariski_differential_agrees} is necessary but not sufficient for $(g,G)$ to be locally extendable, as the following example demonstrates.
\end{remark}
\begin{example}
    Let $M_0 = M_1 = \R^2$ and $A_0 = A_1$ be the union of the axes. Let $g:A_0 \to A_1$ be the identity and
    \begin{equation*}
        G(x,y) = \begin{pmatrix}
            1 & x \\
            0  & 1
        \end{pmatrix}.
    \end{equation*}
    Then $G$ is a tangent bundle isomorphism over $g$ with $G\Big\lvert_{T^ZA_0} = D^Zg$, but the pair $(g,G)$ is not (locally) extendable. Indeed, if $\G:\R^2 \to \R^2$ is such an extension, then the smooth function $\G_x:\R^2 \to \R$ satisfies
    \begin{equation*}
        \left(\frac{d\G_x}{dx}, \frac{d\G_x}{dy}\right)(x,y) = \left(1, x\right),\quad (x,y) \in A_0.
    \end{equation*}
    Taking derivative of the smooth function $\frac{d\G_x}{dy}$ with respect to the path $t \mapsto (t, 0)$ (contained in $A_0$) we get
    \begin{equation*}
        \frac{d}{dx}\frac{d\G_x}{dy} = 1,
    \end{equation*}
    but taking derivative of the smooth function $\frac{d\G_x}{dx}$ with respect to the path $t\mapsto (0,t)$ we get
    \begin{equation*}
        \frac{d}{dy}\frac{d\G_x}{dx} = 0.
    \end{equation*}
    This contradicts the smoothness of $\G_x$.
\end{example}

\subsection{Local extendability implies extendability}

In this subsection we show that local extendability of a pair $(g,G)$ is equivalent to extendability. To this end, we prove a stronger result, \Cref{lemma:stronger_local_extendability_implies_extendability}, which we use in the proof of \Cref{thm:main_coisotropic} in \Cref{section:coisotropic_theorem}.

\begin{lemma}\label{lemma:stronger_local_extendability_implies_extendability}
    For $i=0,1$, let $M_i$ be manifolds and $B_i \subset A_i \subset M_i$ closed subspaces of them. Let $g:A_0 \to A_1$ be a smooth map and
    \begin{equation*}
        G:TM_0\Big\lvert_{B_0} \to TM_1\Big\lvert_{B_1}
    \end{equation*}
    a tangent bundle morphism over $g\Big\lvert_{B_0}$.
    Assume that for each $b_0\in B_0$ there exist a neighbourhood $U_{b_0}$ of $b_0$ in $M_0$ and a smooth map $\G^{b_0}:U_{b_0} \to M_1$ with
    \begin{enumerate}
        \item $\G^{b_0}\Big\lvert_{A_0\cap U_{b_0}} = g$, and
        \item $D\G^{b_0} = G$ along $B_0\cap U_{b_0}$.
    \end{enumerate}
    Then there exist a neighbourhood $U_0$ of $B_0$ in $M_0$ and a smooth map $\G:U_0 \to M_1$ with
    \begin{enumerate}
        \item $\G\Big\lvert_{A_0\cap U_0} = g$, and
        \item $D\G = G$ along $B_0$.
    \end{enumerate}

    Moreover, let $P_0, O_0$ be open subsets of $M_0$, with $\overline{P_0} \subset O_0$, and let
    \begin{equation*}
        \G^{O_0}:O_0 \to M_1
    \end{equation*}
    be a smooth map with
    \begin{enumerate}
        \item $\G^{O_0}\Big\lvert_{A_0\cap O_0} = g$ and
        \item $D\G^{O_0} = G$ along $B_0 \cap O_0$.
    \end{enumerate}
    Then $\G:U_0 \to M_1$ can be taken so that it agrees with $\G^{O_0}$ on $\overline{P_0} \cap U_0$.
\end{lemma}
\begin{proof}
    For each $b_0\in B_0 \setminus \overline{P_0}$, let $U_{b_0}$ be a neighbourhood of $b_0$  disjoint from  $\overline{P_0}$, and let $\G^{b_0}:U_{b_0} \to M_1$ be a smooth map with $\G^{b_0}\Big\lvert_{A_0\cap U_{b_0}} = g$ and $D\G^{b_0} = G$ along $B_0\cap U_{b_0}$.

    Let
    \begin{equation*}
        U_0' \coloneq O_0 \cup \bigcup_{B_0 \setminus \overline{P_0}}U_{b_0}.
    \end{equation*}
    Using paracompactness, there exist a discrete subset $I \subset B_0 \setminus \overline{P_0}$ and a smooth partition of unity
    \begin{equation*}
        \{\varphi_{O_0}\} \cup {\{\varphi_{b_0}\}}_{b_0 \in I}:U_0' \to [0,1],
    \end{equation*}
    such that $\varphi_{O_0}$ is supported in $O_0$ and each $\varphi_{b_0}$ is supported in $U_{b_0}$. In particular, $\varphi_{O_0}$ restricts to $1$ on $\overline{P_0}$.

    Identify $M_1$ with a submanifold of some $\R^N$, and let $W \subset \R^N$ be a tubular neighbourhood of $M_1$ in $\R^N$ with projection $\pi:W \to M_1$.

    Let $\G_{\R^N}:U_0' \to \R^N$ be the smooth function defined by
    \begin{equation*}
        \G_{\R^N}(p_0) = \varphi_{O_0}(p_0)\cdot \G^{O_0}(p_0) + \sum_{b_0\in I}\varphi_{b_0}(p_0)\cdot \G^{b_0}(p_0)
    \end{equation*}
    and let $U_0 =\G_{\R^N}^{-1}(W)$, which is a neighbourhood of $B_0$ in $M_0$. Define $\G:U_0 \to M_1$ by
    \begin{equation*}
        \G = \pi \circ \G_{\R^N}.
    \end{equation*}
    We claim that $\G$ satisfies the required properties.

    Let $p_0\in A_0 \cap U_0$. Since $\G^{O_0}\Big\lvert_{A_0\cap O_0} = g$, for all $b_0\in I$ we have $\G^{b_0}\Big\lvert_{A_0 \cap U_{b_0}} = g$, and $\pi$ fixes $M_1$, we get
    \begin{align*}
        \G(p_0) &= \pi\left(\varphi_{O_0}(p_0)\cdot \G^{O_0}(p_0) + \sum_{b_0\in I}\varphi_{b_0}(p_0)\cdot \G^{b_0}(p_0)\right) \\
        &= \pi\left(\underbrace{\left(\varphi_{O_0}(p_0) + \sum_{b_0\in I}\varphi_{b_0}(p_0)\right)}_{1}\cdot g(p_0)\right) \\
        &= \pi\left(g(p_0)\right) \\
        &= g(p_0).
    \end{align*}
    Similarly, for $p_0\in B_0$ and $v\in T_{p_0}M_0$, since $D\pi$ fixes $TM_1 \subset T\R^N\Big\lvert_{M_1}$, we get
    \begin{align*}
        D\G(v) =& \left(D\pi\circ D\G_{\R^N}\right)(v) \\
        =& D\pi\Bigl(D\varphi_{O_0}(v)\cdot \G^{O_0}(p_0) + \varphi_{O_0}(p_0)\cdot D\G^{O_0}(v)\Bigr) + \\
        & D\pi\left(\sum_{b_0\in I}D\varphi_{b_0}(v)\cdot \G^{b_0}(p_0) + \sum_{b_0\in I}\varphi_{b_0}(p_0)\cdot D\G^{b_0}(v)\right) \\
        =& D\pi\left(\underbrace{\left(D\varphi_{O_0}(v) + \sum_{b_0\in I}D\varphi_{b_0}(v)\right)}_0\cdot g(p_0) + \underbrace{\left(\varphi_{O_0}(p_0) + \sum_{b_0\in I}\varphi_{b_0}(p_0)\right)}_1\cdot G(v)\right) \\
        =& \left(D\pi \circ G\right)(v) \\
        =& G(v).
    \end{align*}

    Finally, since $\varphi_{O_0}$ restricts to $1$ on $\overline{P_0}$, the diffeomorphism $\G:U_0 \to M_1$ agrees with $\G^{O_0}$ on $\overline{P_0}$.
\end{proof}

\begin{thm}\label{thm:local_extendability_implies_extendability}
    For $i=0,1$, let $A_i \subset M_i$ be closed subspaces of manifolds, with the induced subspace differential structure. Let $g:A_0 \to A_1$ be a smooth map and $G:TM_0\Big\lvert_{A_0} \to TM_1\Big\lvert_{A_1}$ a tangent bundle morphism over $g$.
    Then the pair $(g,G)$ is locally extendable if and only if it is extendable.
\end{thm}
\begin{proof}
    Apply \Cref{lemma:stronger_local_extendability_implies_extendability} with $B_i = A_i$.
\end{proof}

\subsection{Proof of Theorem~\ref{thm:main_diffeomorphism}}
\begin{lemma}\cite[Exercise~1.8.14]{Guillemin_pollack}\label{lemma:inverse_function_revisited}
    For $i=0,1$, let $M_i$ be manifolds, $A_i\subset M_i$ closed subsets, and $f:M_0 \to M_1$ a smooth function. Assume that for all $a_0\in A_0$, the differential ${D_{a_0}f:T_{a_0}M_0 \to T_{f(a_0)}M_1}$ is an isomorphism and that $f$ restricts on $A_0$ to a diffeomorphism of $A_0$ onto $A_1$. Then there exists a neighbourhood $U_0$ of $A_0$ in $M_0$ such that $f\Big\lvert_{U_0}:U_0 \to f(U_0)$ is a diffeomorphism.
\end{lemma}
\begin{proof}
    Since near $A_0$ the map $f$ is a local diffeomorphism, it is enough to find a neighbourhood $U_0$ of $A_0$ such that $f$ is injective on $U_0$.

    For every $a_1\in A_1$ and $a_0 \in A_0$ with $f(a_0)=a_1$, by the inverse function theorem and the assumption that
    \begin{equation*}
        D_{a_0}f:T_{a_0}M_0 \to T_{f(a_0)}M_1
    \end{equation*}
    is an isomorphism, there exists a neighbourhood $\widetilde{U}_{a_1}$ of $a_1$ and a local inverse $g_{a_1}:\widetilde{U}_{a_1} \to M_0$ of $f$.
    Using paracompactness and the fact that $A_1$ is closed, there exist a discrete subset $I\subset A_1$ and a locally finite collection
    ${\{U_{a_1}\}}_{a_1 \in I}$
    of open sets of $M_1$ that covers $A_1$ and such that $\overline{U_{a_1}} \subset \widetilde{U}_{a_1}$ for all $a_1 \in I$. Write
    \begin{equation*}
        U_{A_1} \coloneq \bigcup_{a_1 \in I} U_{a_1}.
    \end{equation*}

    Let
    \begin{equation*}
        W = \{q_1\in U_{A_1}: \: g_{a_1}(q_1) = g_{a_1'}(q_1)\text{ whenever }a_1, a_1'\in I\text { and } q_1\in U_{a_1}\cap U_{a_1'}\}.
    \end{equation*}
    We prove that $W$ contains an open neighbourhood of $A_1$. Let $p_1\in A_1$, let $p_0\in A_0$ be the unique point with $f(p_0) = p_1$, and let
    \begin{equation*}
        J_{p_1} = \{a_1 \in I:\: p_1 \in \overline{U_{a_1}}\}.
    \end{equation*}
    By the local finiteness of ${\{U_{a_1}\}}_{a_1 \in I}$, the subset $J_{p_1}\subset I$ is finite, and there exists an open neighbourhood $V_{p_1}$ of $p_1$ in $U_{A_1}$ that intersects each $U_{a_1}$ if and only if $a_1 \in J_{p_1}$.
    Write
    \begin{equation*}
        W_{p_1} \coloneq V_{p_1} \cap f\left(\bigcap_{a_1\in J_{p_1}}g_{a_1}\left(\widetilde{U}_{a_1}\right)\right).
    \end{equation*}
    Since the subset $J_{p_1}\subset I$ is finite and for all $a_1 \in J_{p_1}$ we have $g_{a_1}(p_1) = p_0$ and $p_1 \in \widetilde{U}_{a_1}$, the subset $W_{p_1}$ is an open neighbourhood of $p_1$ in $U_{A_1}$.

    Let ${q_1 \in W_{p_1}}$ and assume that there exist $a_1,a_1'\in I$ with $q_1 \in U_{a_1}\cap U_{a_1'}$.
    Then $a_1, a_1' \in J_{p_1}$, and therefore ${q_1\in f\left(g_{a_1}\left(\widetilde{U}_{a_1}\right) \cap g_{a_1'}\left(\widetilde{U}_{a_1'}\right)\right)}$.
    Now,
    \begin{equation*}
        q_1 = \left( f\circ g_{a_1} \right)(q_1) = \left( f\circ g_{a_1'} \right)(q_1)
    \end{equation*}
    and $f$ is injective on $g_{a_1}\left(\widetilde{U}_{a_1}\right) \cap g_{a_1'}\left(\widetilde{U}_{a_1'}\right)$, which implies ${g_{a_1}(q_1) = g_{a_1'}(q_1)}$. It follows that $W_{p_1} \subset W$.

    Therefore, $W$ contains an open neighbourhood of $A_1$. Denote such a neighbourhood by $W_{A_1}$. On $W_{A_1}$, one can patch together the locally defined $g_{a_1},\, a_1 \in I$ to a smooth inverse $g$ of $f$. In particular, $U_0 \coloneq g\left(W_{A_1}\right)$ is an open neighbourhood of $A_0$ on which $f$ is injective.
\end{proof}

We are now ready to prove \Cref{thm:main_diffeomorphism}.
\begin{proof}[Proof of \Cref{thm:main_diffeomorphism}]
    By \Cref{thm:local_extendability_implies_extendability}, the pair $(g,G)$ is extendable, i.e., there exist a neighbourhood $\widetilde{U}_0$ of $A_0$ in $M_0$ and an extension
    \begin{equation*}
        \G:\widetilde{U}_0 \to M_1
    \end{equation*}
    of $(g, G)$. By \Cref{lemma:inverse_function_revisited}, and since $G$ is an isomorphism, $\G$ restricts to a diffeomorphism from some open neighbourhood $U_0$ of $A_0$ to some open neighbourhood $U_1$ of $A_1$.
\end{proof}

\section{Relative Poincar\'e Lemma and Moser’s trick}\label{section:moser_trick_proof}
In this section we prove \Cref{thm:moser_trick}, and then use it together with \Cref{thm:main_diffeomorphism} to prove \Cref{thm:main_symplectic}.

\begin{definition}\label{definition:smooth_weak_deformation_retraction}
    Let $M$ be a manifold, $A\subset M$ a subset, and $V$ a neighbourhood of $A$ in $M$. A \textbf{smooth weak deformation retraction} of $V$ to $A$ is a smooth map
    \begin{equation*}
        R:[0,1]\times V \to V
    \end{equation*}
    so that the family of functions $r^t = R(t, \cdot)$ satisfies
    \begin{enumerate}
        \item $r^1 = \Id_V$,
        \item $r^0(V) \subset A$, and
        \item\label{condition:remains_in_A} $A$ is an invariant subset of $R$, i.e., $r^t(A) \subset A$, for all $t\in [0,1]$.
    \end{enumerate}
\end{definition}

\begin{remark}\label{remark:strong_vs_weak_deformation_retraction}
    Here the word ``weak'' refers to condition \cref{condition:remains_in_A}, which allows $A$ to move within itself along the deformation. A \textbf{strong} deformation retraction is a weak deformation retraction with $r^t\Big\lvert_A = \Id_A$ for all $t\in[0,1]$.
\end{remark}

\subsection{Relative Poincar\'e Lemma}

\begin{notation}
    Let $M$ be a manifold. We denote the smooth $k$-forms on $M$ by $\Omega^k\left(M\right)$, the exterior derivative by $d$, the contraction with a vector field $X$ by $\i_X$, and the Lie derivative in the direction of $X$ by $\L_X$.
\end{notation}

In this subsection we prove a version of the relative Poincar\'e Lemma, \Cref{lemma:relative_poincare}, which works for a general subset of a manifold, provided it admits a smooth weak deformation retraction from a neighbourhood. Cf. \cite{relative_poincare}.

\begin{lemma}[Homotopy property of the fiber integration operator {\cite[Lemma~17.9]{lee_smooth_manifolds}}]\label{lemma:homotopy_property_of_fiber_integration}
    Let $V$ be a manifold. Let $i_s:V\to [0,1] \times V$ be the inclusion at the $s$ level, let $\pi_V:[0,1] \times V \to V$ be the projection, and let
    \begin{equation*}
        {\left(\pi_V\right)}_*: \Omega^k\left([0,1] \times V\right) \to \Omega^{k-1}\left(V\right)
    \end{equation*}
    be the fiber integration operator defined by
    \begin{equation*}
        {\left(\pi_V\right)}_*\gamma = \int_{0}^{1}i_t^* \i_{\frac{\partial}{\partial t}}\gamma \,dt
    \end{equation*}
    for all $\gamma \in \Omega^k\left([0,1] \times V\right)$.
    Then for all $\gamma \in \Omega^k\left([0,1] \times V\right)$ we have
    \begin{equation*}
        i_1^*\gamma - i_0^*\gamma = {\left(\pi_V\right)}_*d\gamma + d{\left(\pi_V\right)}_*\gamma.
    \end{equation*}
\end{lemma}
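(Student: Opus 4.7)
The plan is to reduce the identity to the fundamental theorem of calculus by decomposing forms on $[0,1]\times V$ along the fiber direction $\frac{\partial}{\partial t}$. Every $\gamma \in \Omega^k([0,1]\times V)$ admits a unique decomposition
\[
\gamma = \alpha + dt \wedge \beta,
\]
where for each $t$ the forms $\alpha(t) \in \Omega^k(V)$ and $\beta(t) \in \Omega^{k-1}(V)$ contain no $dt$-factor. This is a local statement, obtained by separating the wedge monomials in a coordinate expansion of $\gamma$ into those that contain $dt$ and those that do not.

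With this decomposition, $i_s^*\gamma = \alpha(s)$ for $s=0,1$, and so the left-hand side of the identity is $\alpha(1) - \alpha(0) = \int_0^1 \frac{\partial \alpha}{\partial t}\, dt$ by the fundamental theorem of calculus. For the right-hand side, I would split the exterior derivative on the cylinder as $d = d_V + dt \wedge \frac{\partial}{\partial t}$, with $d_V$ acting only in the $V$-directions, and then compute $\i_{\frac{\partial}{\partial t}} \gamma = \beta$ and $\i_{\frac{\partial}{\partial t}} d\gamma = \frac{\partial \alpha}{\partial t} - d_V \beta$ directly from the decomposition, using that $\i_{\frac{\partial}{\partial t}}$ annihilates $d_V\alpha$ and that $dt\wedge dt = 0$.

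Substituting into the definition of $(\pi_V)_*$ gives $(\pi_V)_* \gamma = \int_0^1 \beta(t)\, dt$ and $(\pi_V)_* d\gamma = \alpha(1) - \alpha(0) - d_V \int_0^1 \beta\, dt$, while $d (\pi_V)_* \gamma = d_V \int_0^1 \beta\, dt$. The two $d_V$-contributions cancel, leaving exactly $\alpha(1) - \alpha(0) = i_1^*\gamma - i_0^*\gamma$, as desired. The only technical subtlety is commuting $d_V$ with $\int_0^1 \cdot\, dt$; this is standard from the compactness of $[0,1]$ and the smoothness of $\gamma$, and can be verified pointwise in local coordinates. No serious obstacle is expected, as this is a direct realization of the classical Cartan-style homotopy formula on the cylinder $[0,1]\times V$.
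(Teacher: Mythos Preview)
Your argument is correct and is exactly the standard proof of this homotopy formula. The paper does not supply its own proof of this lemma; it simply quotes the result from Lee's textbook \cite[Lemma~17.9]{lee_smooth_manifolds}, and your decomposition $\gamma = \alpha + dt\wedge\beta$ together with the splitting $d = d_V + dt\wedge\frac{\partial}{\partial t}$ is precisely the approach used there.
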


\begin{lemma}[Relative Poincar\'e Lemma]\label{lemma:relative_poincare}
    Let $V$ be a manifold, $A\subset V$ a subset, and ${R:[0,1]\times V \to V}$ a smooth weak deformation retraction from $V$ to $A$. Let $\alpha \in \Omega^k\left(V\right)$ be a closed $k$-form on $V$ that vanishes along $A$.
    Then the $(k-1)$-form
    \begin{equation*}
        \beta \coloneq {\left(\pi_V\right)}_*R^*\alpha
    \end{equation*}
    vanishes along $A$ and satisfies $d\beta=\alpha$.

    Moreover, given an $R$-invariant subset $B\subset V$ such that $\alpha$ vanishes on $T^ZB$, the form $\beta$ vanishes on $T^ZB$.
\end{lemma}
\begin{proof}
    By assumption, $R\circ i_1 = \Id_V$. Let $r \coloneq R\circ i_0$, then
    \begin{equation*}
        i_1^*R^*\alpha = \alpha, \quad i_0^*R^*\alpha=r^*\alpha.
    \end{equation*}

    Applying \Cref{lemma:homotopy_property_of_fiber_integration} to $\gamma = R^*\alpha$ we obtain
    \begin{equation*}
        \alpha - r^*\alpha = {\left(\pi_V\right)}_*dR^*\alpha + d{\left(\pi_V\right)}_*R^*\alpha.
    \end{equation*}
    Since $\alpha$ is closed, $dR^*\alpha = R^*d\alpha = 0$. Since $\alpha$ vanishes along $A$ and $r(V) \subset A$, we get $r^*\alpha = 0$. Therefore,
    \begin{equation*}
        \alpha = d{\left(\pi_V\right)}_*R^*\alpha
    \end{equation*}
    and we can take $\beta = {\left(\pi_V\right)}_*R^*\alpha$.

    The fact that $\beta$ vanishes along $A$ follows from its definition, the fact that $\alpha$ vanishes along $A$, and $R\left([0,1]\times A\right) \subset A$.

    Given an $R$-invariant subset $B\subset V$, the Zariski tangent $T^ZB$ is invariant under $Dr^t$ and $\frac{d}{dt}r^t(p) \in T^Z_{R(t, p)}B$ for $p\in B$. For $v\in T^Z_pB$, we have
    \begin{align*}
        \beta_p(v)
        &= \int_{0}^{1}i_t^* \i_{\frac{\partial}{\partial t}}R^*\alpha \,dt \\
        &= \int_{0}^{1}\alpha_{R(t, p)}\left(\underbrace{\frac{d}{ds}\bigg\lvert_{t}r^s(p), Dr^t\cdot v}_{\in T^ZB}\right) \,dt,
    \end{align*}
    so if $\alpha$ vanishes on $T^ZB$ then $\beta$ also vanishes on $T^ZB$.
\end{proof}

\subsection{Moser's trick}

\begin{lemma}\label{lemma:flow_domain_vanish_neighbourhood}
    Let $M$ be a manifold, $A\subset M$ a subset, and ${\left(X_t\right)}_{t\in [0,1]}$ a time-dependent vector field with local flow
    \begin{equation*}
        \Phi^{t_0}_{(X_t)}:\mathcal{O} \to M,
    \end{equation*}
    where $\mathcal{O} \subset M \times \R$ is the maximal flow domain. Assume that $X_t$ vanishes along $A$ for all $t$.
    Then there exists a neighbourhood $U$ of $A$ such that
    \begin{equation*}
        U \times [0,1] \subset \mathcal{O}.
    \end{equation*}
\end{lemma}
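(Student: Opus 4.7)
The plan is to argue that $A$ itself sits inside the flow domain for the whole interval $[0,1]$, and then use openness of the maximal flow domain together with the compactness of $[0,1]$ to fatten $A$ to a neighbourhood.

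First I would observe that for each $p\in A$ the constant curve $t\mapsto p$ is an integral curve of $(X_t)_{t\in[0,1]}$ starting at $p$, since by hypothesis $X_t(p)=0$ for every $t\in[0,1]$. By uniqueness of integral curves this is \emph{the} integral curve, so $\Phi^t_{(X_t)}(p)=p$ is defined for all $t\in[0,1]$. Equivalently,
\begin{equation*}
    A\times [0,1]\subset \mathcal{O}.
\end{equation*}

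Next I would invoke the standard fact that the maximal flow domain $\mathcal{O}\subset M\times\R$ of a (time-dependent) smooth vector field is open. For each fixed $p\in A$, the set $\{p\}\times[0,1]$ is compact and contained in the open set $\mathcal{O}$. By the tube lemma, there is an open neighbourhood $U_p\subset M$ of $p$ with
\begin{equation*}
    U_p\times[0,1]\subset \mathcal{O}.
\end{equation*}
Setting $U\coloneq \bigcup_{p\in A}U_p$ gives the required open neighbourhood of $A$ satisfying $U\times[0,1]\subset \mathcal{O}$. There is no substantive obstacle here; the only thing to be careful about is that the vanishing hypothesis must hold for all $t\in[0,1]$ (not just at one time) so that the constant curve actually is an integral curve, which is exactly what is assumed.
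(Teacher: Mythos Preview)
Your proof is correct and follows essentially the same approach as the paper: show $\{p\}\times[0,1]\subset\mathcal{O}$ since $X_t$ vanishes at $p$, apply the tube lemma using openness of $\mathcal{O}$ and compactness of $[0,1]$ to get $U_p\times[0,1]\subset\mathcal{O}$, then set $U=\bigcup_{p\in A}U_p$. Your version simply spells out the integral-curve reasoning and names the tube lemma explicitly, where the paper is more terse.
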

\begin{proof}
    For each $p\in A$, because $X_t$ vanishes along $A$, we have
    \begin{equation*}
        \{p\} \times [0, 1] \subset \mathcal{O}.
    \end{equation*}
    Since $\mathcal{O} \subset M \times [0,1]$ is open and $[0,1]$ is compact, the tube lemma yields a neighbourhood $U_p$ of $p$ in $M$ such that
    \begin{equation*}
        U_p \times [0, 1] \subset \mathcal{O}.
    \end{equation*}
    Finally, take $U = \bigcup_{p\in A}U_p$.
\end{proof}

\begin{lemma}\label{lemma:using_beta_for_moser}
    Let $V$ be a manifold, $A \subset V$ a subset, and $\w_0, \w_1$ symplectic forms on $V$. Suppose that there exists a $1$-form $\beta$ with $d\beta = \w_0 - \w_1$. Assume that along $A$ the $1$-form $\beta$ vanishes and $\w_1 = \w_0$.
    Then there exist neighbourhoods $U',U'' \subset V$ of $A$ and a diffeomorphism
    \begin{equation*}
        \G:U'\to U''
    \end{equation*}
    such that
    \begin{enumerate}
        \item\label{property:moser_diff_symplectic_pullback} $\G^*\w_1 = \w_0$ and
        \item\label{property:moser_vanishes_on_vanishing_beta} $\G$ fixes $A$.
    \end{enumerate}
\end{lemma}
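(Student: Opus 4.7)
The plan is to run the standard Moser trick with the interpolating family
\begin{equation*}
    \w_t \coloneq (1-t)\w_0 + t\w_1 = \w_0 - t\, d\beta, \qquad t\in[0,1].
\end{equation*}
Along $A$ one has $\w_t = \w_0$, which is symplectic. Since non-degeneracy is an open condition and $[0,1]$ is compact, a tube-lemma argument at each $p\in A$ produces an open neighbourhood $U\subset V$ of $A$ on which $\w_t$ is symplectic for every $t\in[0,1]$.

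On $U$, define the time-dependent vector field $X_t$ by the Moser equation $\i_{X_t}\w_t = \beta$, which is uniquely determined and smooth in $(t,p)$ by non-degeneracy of $\w_t$. Because $\w_t$ is non-degenerate at every point of $U$, we have $X_t(p)=0$ at every point $p\in U$ with $\beta_p=0$; in particular $X_t$ vanishes along $A$. \Cref{lemma:flow_domain_vanish_neighbourhood} then supplies an open neighbourhood $U'\subset U$ of $A$ on which the flow $\Phi^t_{(X_s)}$ is defined for all $t\in[0,1]$. Set $\G \coloneq \Phi^1_{(X_s)}\big\lvert_{U'}$ and $U'' \coloneq \G(U')$.

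The usual Moser computation gives
\begin{equation*}
    \frac{d}{dt}\left(\Phi^t_{(X_s)}\right)^{\!*}\w_t = \left(\Phi^t_{(X_s)}\right)^{\!*}\!\left(\L_{X_t}\w_t + \frac{d\w_t}{dt}\right) = \left(\Phi^t_{(X_s)}\right)^{\!*}\!\left(d\i_{X_t}\w_t - d\beta\right) = 0,
\end{equation*}
using $d\w_t=0$ together with $\i_{X_t}\w_t=\beta$ and $\frac{d\w_t}{dt}=-d\beta$. Integrating from $0$ to $1$ yields $\G^*\w_1 = \w_0$, which is \cref{property:moser_diff_symplectic_pullback}. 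For \cref{property:moser_vanishes_on_vanishing_beta}, any $p\in\{\beta=0\}\cap U'$ satisfies $X_t(p)=0$ for all $t$ by the observation above, so $p$ is a fixed point of the flow and $\G(p)=p$; in particular $\G$ fixes $A$. The only substantive technical point, handled in the first paragraph, is obtaining a uniform neighbourhood on which $\w_t$ is non-degenerate for every $t\in[0,1]$; everything else is the classical argument.
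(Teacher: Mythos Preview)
Your proof is correct and follows essentially the same route as the paper's: interpolate $\w_t=(1-t)\w_0+t\w_1$, use non-degeneracy along $A$ and compactness of $[0,1]$ to get a neighbourhood where $\w_t$ is symplectic, solve $\i_{X_t}\w_t=\beta$, invoke \Cref{lemma:flow_domain_vanish_neighbourhood} for the flow domain, and finish with the standard Moser computation. The only cosmetic difference is a sign convention (the paper takes $\i_{X_t}\w_t=-\beta$), which is immaterial.
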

\begin{proof}
    Define
    \begin{equation*}
        \w_t = (1-t)\w_0 + t\w_1.
    \end{equation*}
    Then $\w_t$ is a smooth family of $2$-forms, non-degenerate along $A$, and hence non-degenerate on a neighbourhood of $A$. It satisfies $\frac{d}{dt}\w_t = \w_1 - \w_0 = -d\beta$.

    Let ${\left(X_t\right)}_{t\in [0,1]}$ be the time-dependent vector field satisfying
    \begin{equation*}
        \i_{X_t}\w_t = \beta
    \end{equation*}
    and let $\Phi^{t_0}_{(X_t)}$ be its flow. By \Cref{lemma:flow_domain_vanish_neighbourhood}, there exists a smaller neighbourhood $U' \subset V$ of $A$ on which the map $\Phi^{1}_{(X_t)}$ is well defined. We claim that
    \begin{equation*}
        \G \coloneq \Phi^{1}_{(X_t)}:U' \to U'' \coloneq \G(U')
    \end{equation*}
    satisfies the required properties.

    For property \cref{property:moser_diff_symplectic_pullback}, we differentiate
    \begin{align*}
        \frac{d}{ds}\bigg\lvert_{t_0}{\left(\Phi^{s}_{(X_t)}\right)}^*\w_s
        &= {\left(\Phi^{t_0}_{(X_t)}\right)}^*\left(\frac{d}{ds}\bigg\lvert_{t_0}\w_s + \L_{X_{t_0}}\w_{t_0} \right) \\
        &= {\left(\Phi^{t_0}_{(X_t)}\right)}^*\left(-d\beta + d\underbrace{\i_{X_{t_0}}\w_{t_0}}_{\beta} + \i_{X_{t_0}}\underbrace{d\w_{t_0}}_0\right) \\
        &= 0,
    \end{align*}
    and get that ${\left(\Phi^{s}_{(X_t)}\right)}^*\w_s = \w_0$ for all $s\in [0,1]$. In particular, ${\left(\Phi^1_{(X_t)}\right)}^*\w_1 = \w_0$.

    Property \cref{property:moser_vanishes_on_vanishing_beta} follows from the fact that $X_t$ vanishes on $\{ \beta = 0\} \cap U'$.
\end{proof}

\subsection{Proof of Theorem \ref{thm:moser_trick}}

\begin{proof}[Proof of \Cref{thm:moser_trick}]
    The form $\alpha \coloneq \w_0 - \w_1$ is closed and vanishes along $A$, so by \Cref{lemma:relative_poincare} there exists a $1$-form $\beta$ which vanishes along $A$ and such that $d\beta = \w_0 - \w_1$. Applying \Cref{lemma:using_beta_for_moser} yields the required diffeomorphism.
\end{proof}

\begin{remark}\label{remark:mosers_trick_discussion}
    The following \Cref{example:weak_moser_trick_differential} shows that the differential of the diffeomorphism constructed in the proof of \Cref{thm:moser_trick} does not necessarily restrict to the identity along $A$.

    However, if we further assume that the smooth deformation retraction is strong,  then ($A$ is necessarily a submanifold and) we prove in \Cref{appendix:strong_deformation_retraction_moser} that the differential of the diffeomorphism constructed in the proof of \Cref{thm:moser_trick} restricts to the identity along $A$.
\end{remark}

\begin{example}\label{example:weak_moser_trick_differential}
    Let $A=\R_x = \{y=0\} \subset \R^2_{x,y}$, and take
    \begin{equation*}
        \w_0 = dx\wedge dy,\quad \w_1 = (1 + y)dx\wedge dy
    \end{equation*}
    so that $\alpha = \w_0 - \w_1 = -ydx\wedge dy$. Take the smooth weak deformation retraction
    \begin{equation*}
        r^t(x,y) = (tx, ty).
    \end{equation*}
    Then the fiber integration construction yields
    \begin{align*}
        \beta(x,y)(v) &= \int_{0}^{1}i_t^*(x,y)\left(\i_{\frac{\partial}{\partial t}}R^*\alpha\right) \,dt \\
        &= \int_{0}^{1} \alpha\left(r^t(x,y)\right)\left(\frac{d}{ds}\bigg\lvert_t r^s(x,y), D_{(x,y)}r^t(v)\right) dt \\
        &= \int_{0}^{1} \left(-ty dx \wedge dy\right)\left(x \partial x + y \partial y, tv\right)dt  \\
        &= \left(\int_{0}^{1}-t^2 dt \right)\left(xy dy - y^2dx\right)(v) \\
        &= \frac{1}{3}\left(y^2 dx - yx dy\right)(v)
    \end{align*}
    with $d\beta = \alpha$ and
    \begin{equation*}
        X_t = \frac{-y}{3(1+ty)} \left(x \partial x  + y \partial y \right).
    \end{equation*}

    We use the identification $T\R^2 \simeq \R^2 \times \R^2$. Define $f(t, y) = \frac{-y}{3(1+ty)}$, so that
    \begin{equation*}
        X_t(x,y) = \begin{pmatrix}
            xf(t,y) \\
            yf(t,y)
        \end{pmatrix}.
    \end{equation*}
    Since $f(t, 0) =0$ and $\frac{\partial f}{\partial y}(t,0)= \frac{-1}{3}$, we obtain
    \begin{equation*}
        D_{(x,0)}X_t
        = \begin{pmatrix}
            f(t,0) & x\frac{\partial f}{\partial y}(t,0) \\
            0 & 0\cdot\frac{\partial f}{\partial y}(t,0) + f(t,0)
        \end{pmatrix}
        = \begin{pmatrix}
            0 & \frac{-x}{3} \\
            0 & 0
        \end{pmatrix}.
    \end{equation*}

    Using the identification $T\R^2 \simeq \R^2 \times \R^2$, the differential $D_{(x,0)}\Phi_{(X_t)}^{t_0}$ at $y=0$ needs to satisfy the differential equation
    \begin{equation*}
        \frac{d}{ds}\bigg\lvert_{t_0}\left(D_{(x,0)}\Phi_{(X_t)}^{s}\right) = \begin{pmatrix}
            0 & \frac{-x}{3} \\
            0 & 0
        \end{pmatrix}D_{(x,0)}\Phi_{(X_t)}^{t_0}
    \end{equation*}
    with $D_{(x,0)}\Phi_{(X_t)}^{0} = \Id$. One can check that
    \begin{equation*}
        D_{(x,0)}\Phi_{(X_t)}^{t_0} =
        \begin{pmatrix}
            1 & \frac{-xt_0}{3} \\
            0 & 1
        \end{pmatrix}
    \end{equation*}
    is the unique solution, and in particular $ D_{(x,0)}\Phi_{(X_t)}^1 \neq \Id$ for $x\neq 0$.
\end{example}

\subsection{Proof of Theorem~\ref{thm:main_symplectic}}
\begin{proof}[Proof of \Cref{thm:main_symplectic}]
    Apply \Cref{thm:main_diffeomorphism} with $(g,G)$ to obtain neighbourhoods $U_0,U_1$ of $A_0,A_1$, respectively, and a diffeomorphism
    \begin{equation*}
        \widetilde{\G}:U_0 \to U_1
    \end{equation*}
    that restricts to $g$ on $A_0$ and whose differential restricts to $G$ on $TM_0\Big\lvert_{A_0}$. Since $G$ is a symplectic tangent bundle isomorphism, the symplectic forms $\w_0$ and $\widetilde{\G}^*\w_1$ agree along $A_0$.

    By assumption, $A_0$ is smoothly locally trivial with conical fibers, so by the results of \cite{zimhony2024commutative}, there exists a neighbourhood $V_0$ of $A_0$ in $U_0$ and a smooth weak deformation retraction of $V_0$ to $A_0$. Applying \Cref{thm:moser_trick}, we obtain neighbourhoods $U', U''\subset V_0$ of $A_0$ and a diffeomorphism $\G^\w:U'\to U''$ with
    \begin{equation*}
        {\left(\G^\w\right)}^*{\widetilde{\G}}^*\w_1 = \w_0
    \end{equation*}
    and ${\left(\G^\w\right)}\Big\lvert_{A_0} = \Id$. It follows that
    \begin{equation*}
        \G \coloneq \widetilde{\G} \circ \G^\w:U' \to \widetilde{\G}(U'')
    \end{equation*}
    is a symplectomorphism with $\G\big\lvert_{A_0} = g$.
\end{proof}

\section{Symplectic normal form for coisotropic stratified subspaces}\label{section:coisotropic_theorem}
In this section we prove \Cref{thm:main_coisotropic}.

\subsection{Proof of Theorem~\ref{thm:main_coisotropic}}
For $i=0,1$, let $\left(M_i, \w_i\right)$ be symplectic manifolds and $\left(A_i, \S_i\right)$ stratified subspaces of them, smoothly locally trivial with conical fibers. Assume that $A_i$ are strongly coisotropic and that there exists a stratified diffeomorphism $g:\left(A_0, \S_0\right) \to \left(A_1, \S_1\right)$ with $g^*\w_1 = \w_0$ as Zariski forms on $A_0$.

We construct the required neighbourhoods and symplectomorphism using ascending recursion on the dimension of strata.

Recall \Cref{notation:strata_skeleton}.

\subsubsection{Assumption for construction step d}\label{subsubsection:construction_assumption}
There exist
\begin{enumerate}
    \item for $i=0,1$, neighbourhoods $U^{\leq (d-1)}_i$ of $A^{\leq (d-1)}_i$ in $M_i$;
    \item a symplectomorphism
    \begin{equation*}
        \G^{\leq (d-1)}: \left(U^{\leq (d-1)}_0, \w_0\right) \to \left(U^{\leq (d-1)}_1, \w_1\right)
    \end{equation*}
    \item a stratified diffeomorphism $g^{\leq (d-1)}:A_0 \to A_1$;
\end{enumerate}
such that the following hold.
\begin{enumerate}
    \item $\G^{\leq (d-1)}$ and $g^{\leq (d-1)}$ agree on $U^{\leq (d-1)}_0 \cap A_0$.
    \item The stratified diffeomorphisms $g, g^{\leq (d-1)}:A_0 \to A_1$ are isotopic through a family of stratified diffeomorphisms ${\{g_t\}}_{t\in [0,1]}$, with $g_t^*\w_1 = \w_0$ as Zariski forms on $A_0$ for all $t\in [0,1]$.
\end{enumerate}

Note that $U^{\leq (d-1)}_i$ may intersect strata of dimension higher than $d-1$.

\subsubsection{Construction step d}\label{subsubsection:constrction_step}
For $d=0$, take $A^{\leq -1}_i=\emptyset$ and $U^{\leq -1}_i=\emptyset$.

For $d>0$, let $V^{\leq (d-1)}_0$ and $W^{\leq (d-1)}_0$ be neighbourhoods of $A^{\leq (d-1)}_0$ in $M_0$ with
\begin{equation*}
    A^{\leq (d-1)}_0 \subset W^{\leq (d-1)}_0 \subset \overline{W^{\leq (d-1)}_0} \subset V^{\leq (d-1)}_0 \subset \overline{V^{\leq (d-1)}_0} \subset U^{\leq (d-1)}_0,
\end{equation*}
and such that
\begin{equation*}
    V^{\leq (d-1)}_1 \coloneq \G^{\leq (d-1)}\left(V^{\leq (d-1)}_0\right), \quad W^{\leq (d-1)}_1 \coloneq \G^{\leq (d-1)}\left(W^{\leq (d-1)}_0\right)
\end{equation*}
satisfy
\begin{equation*}
    A^{\leq (d-1)}_1 \subset W^{\leq (d-1)}_1 \subset \overline{W^{\leq (d-1)}_1} \subset V^{\leq (d-1)}_1 \subset \overline{V^{\leq (d-1)}_1} \subset U^{\leq (d-1)}_1,
\end{equation*}
where the closures are taken in $M_i$.

By local finiteness of $\S_i$, normality of $M_i$ as topological spaces, and \Cref{remark:closedness_of_strata}, there exist neighbourhoods $U^{X^d_i}_i$ of $X^d_i \in \S^d_i$ in $M^{\geq d}_i = M_i \setminus A^{\leq (d-1)}_i$ such that
\begin{equation*}
    \forall X^d_i,Y^d_i\in \S^d_i:\: X^d_i \neq Y^d_i \implies U^{X^d_i}_i \cap U^{Y^d_i}_i = \emptyset.
\end{equation*}
See \cite[Lemma~4.6]{zimhony2024commutative} for more details.

\begin{definition}\label{definition:extension_for_stratum}
    Let $X^d_0 \in \S_0^d$ and $X_1^d \coloneq g^{\leq (d-1)}\left(X^d_0\right) \in \S_1^d$.

    \textbf{An extension for $\mathbf{\left(g^{\leq (d-1)}, \G^{\leq (d-1)} \right)}$ to $\mathbf{(X^d_0, X^d_1)}$} consists of
    \begin{enumerate}
        \item possibly shrinking $U^{X_i}_i$;
        \item neighbourhoods $V^{X^d_i}_i, W^{X^d_i}_i$ of $X^d_i$ in $U_i^{X^d_i}$, satisfying
        \begin{equation*}
            X^d_i \subset W^{X^d_i}_i \subset \overline{W^{X^d_i}_i} \subset V^{X^d_i}_i \subset \overline{V^{X^d_i}_i} \subset U^{X^d_i}_i,
        \end{equation*}
        where the closures are taken in $M^{\geq d}_i$;
        \item a diffeomorphism
        \begin{equation*}
            \G^{X^d_0}: U_0^{X^d_0} \to  U_1^{X^d_1}
        \end{equation*}
        with $\G^{X^d_0}\left(V^{X^d_0}_0\right) = V^{X^d_1}_1$, and similarly for $\overline{V^{X^d_i}_i}, W^{X^d_i}_i$, and $\overline{W^{X^d_i}_i}$;
        \item a stratified diffeomorphism $g^{X^d_0}:U_0^{X^d_0} \cap A_0 \to U_1^{X^d_1} \cap A_1$ defined by
        \begin{equation*}
            g^{X^d_0} \coloneq \G^{X^d_0}\Big\lvert_{U_0^{X^d_0} \cap A_0},
        \end{equation*}
        where we consider $U_i^{X^d_i} \cap A_i$ with their induced stratifications (\Cref{definition:induced_stratification});
    \end{enumerate}
    such that the following conditions hold.
    \begin{enumerate}
        \item $\G^{X^d_0}$ restricts on $W^{X^d_0}_0$ to a symplectomorphism $W^{X^d_0}_0 \to W^{X^d_1}_1$;
        \item $\G^{X^d_0}$ agrees with $\G^{\leq (d-1)}$ on $W^{\leq (d-1)}_0 \cap U^{X^d_0}_0$;
        \item $g^{X^d_0}$ agrees with $g^{\leq (d-1)}$ on $W^{\leq (d-1)}_0 \cap U^{X^d_0}_0 \cap A_0$ and on $\left(U^{X^d_0}_0 \setminus \overline{V^{X^d_0}_0}\right) \cap A_0$;
        \item The stratified diffeomorphisms
        \begin{equation*}
            g^{X^d_0},g^{\leq (d-1)}\Big\lvert_{U^{X^d_0}_0 \cap A_0}:U^{X^d_0}_0 \cap A_0 \to U^{X^d_1}_1 \cap A_1
        \end{equation*}
        are isotopic through a family of stratified diffeomorphisms
        \begin{equation*}
            g_t:U^{X^d_0}_0 \cap A_0 \to U^{X^d_1}_1 \cap A_1,
        \end{equation*}
        satisfying, for all $t\in [0,1]$, the following properties.
        \begin{enumerate}
            \item $g_t^*\w_1 = \w_0$ as Zariski forms on $A_0$.
            \item $g_t$ agrees with $g^{\leq (d-1)}$ on $W^{\leq (d-1)}_0 \cap U^{X^d_0}_0 \cap A_0$ and on $\left(U^{X^d_0}_0 \setminus \overline{V^{X^d_0}_0}\right) \cap A_0$.
        \end{enumerate}
    \end{enumerate}
\end{definition}

See \Cref{figure:schematics_neighbourhood} for a schematic drawing of neighbourhoods.

\begin{figure}
    \centering
    \begin{tikzpicture}[scale=0.9]
        \fill[opacity=0.2,color=magenta] (0,0) -- (7,3) node [anchor=west,opacity=0.4]{$U^{X^d}$} -- (7,-3);
        \fill[opacity=0.3,color=magenta] (0,0) -- (7,2) node [anchor=west,opacity=0.5]{$V^{X^d}$} -- (7,-2);
        \fill[opacity=0.4,color=magenta] (0,0) -- (7,1) node [anchor=west,opacity=0.6]{$W^{X^d}$} -- (7,-1);

        \fill[opacity=0.6,color=blue!50] (0,0) circle (1.5);
        \draw[color=blue] (0,1.5) node[below]{$W^{\leq (d-1)}$};
        \fill[opacity=0.4,color=blue!50] (0,0) circle (2.5);
        \draw[color=blue] (0,2.5) node[below]{$V^{\leq (d-1)}$};
        \fill[opacity=0.2,color=blue!50] (0,0) circle (3.5);
        \draw[color=blue] (0,3.5) node[below]{$U^{\leq (d-1)}$};

        \draw[color=red,line width=1.5pt] (0,0) -- (7,0) node[anchor=west]{$X^d$};
        \draw[line width=1.5pt] (0,0) -- (-3, 2.625) node[align=center,anchor=north east]{Other strata of \\ dimension d} -- (-4,3.5);
        \draw[line width=1.5pt] (0,0) -- (-4,-3.5);
        \filldraw[color=blue] (0,0) circle (4pt) node[below=8pt]{$A^{\leq (d-1)}$} ;
    \end{tikzpicture}
    \caption{A schematic drawing of neighbourhoods: the stratified subspace $A^{\leq d}$ is a union of the three lines, which represent strata of dimension $d$, and the vertex in the middle, which represents $A^{\leq (d-1)}$.}
    \label{figure:schematics_neighbourhood}
\end{figure}

\begin{proposition}\label{proposition:extensions_nbhd_partition}
    Assume that there exist extensions for $\left(g^{\leq (d-1)}, \G^{\leq (d-1)} \right)$ to $(X^d_0, X^d_1)$ for each $X^d_0 \in \S_0^d$ and $X_1^d \coloneq g^{\leq (d-1)}\left(X^d_0\right) \in \S_1^d$.
    Let $i=0,1$ and $p_i \in M_i$. Then either
    \begin{enumerate}
        \item there exists some $X^d_i\in \S^d_i$ with $p_i\in U^{X^d_i}_i$,
        \item $p_i \in A_i^{\leq (d-1)}$, or
        \item $p_i \in M^{\geq d}_i \setminus \bigsqcup_{X^d_i\in \S^d_i} \overline{V^{X^d_0}_i}$.
    \end{enumerate}
\end{proposition}
\begin{proof}
    By construction.
\end{proof}

\begin{proposition}\label{proposition:from_extending_to_single_stratum_to_gleqd}
    Assume that there exist extensions for $\left(g^{\leq (d-1)}, \G^{\leq (d-1)} \right)$ to $(X^d_0, X^d_1)$ for each $X^d_0 \in \S_0^d$ and $X_1^d \coloneq g^{\leq (d-1)}\left(X^d_0\right) \in \S_1^d$.
    Define $g^{\leq d}:A_0 \to A_1$ by
    \begin{equation*}
        g^{\leq d}(p_0) = \begin{cases}
            g^{X^d_0}(p_0), & p_0 \in U^{X^d_0}_0 \cap A_0, \\
            g^{\leq (d-1)}(p_0), & \text{otherwise}.
        \end{cases}
    \end{equation*}
    Then it is a stratified diffeomorphism $A_0 \to A_1$.
\end{proposition}
\begin{proof}
    We prove that $g^{\leq d}$ is smooth, surjective, injective, has a smooth inverse, and is a stratified map.

    For smoothness, let $p_0 \in A_0$. Using \Cref{proposition:extensions_nbhd_partition}, we divide into the following cases.

    If $p_0\in U^{X^d_0}_0$ for some $X^d_0\in \S^d_0$, then there exists a neighbourhood of $p_0$ in $A_0$ contained in $U^{X^d_0}_0 \cap A_0$. In this neighbourhood, $g^{\leq d}$ is smooth since it agrees with $g^{X^d_0}$.

    If $p_0 \in A_0^{\leq (d-1)}$, then there exists a neighbourhood of $p_0$ in $A_0$ contained in $W^{\leq (d-1)}_0 \cap A_0$. In this neighbourhood, $g^{\leq d}$ is smooth since it agrees with $g^{\leq (d-1)}$, as every $g^{X^d_0}$ agrees with $g^{\leq (d-1)}$ on $W^{\leq (d-1)}_0 \cap U^{X^d_0}_0 \cap A_0$.

    If $p_0 \in A^{\geq d}_0 \setminus \bigsqcup_{X^d_0\in \S^d_0} \overline{V^{X^d_0}}$, then there exists a neighbourhood of $p_0$ in $A^{\geq d}_0$ which does not intersect any $\overline{V^{X^d_0}_0}$. In this neighbourhood, $g^{\leq d}$ is smooth since it agrees with $g^{\leq (d-1)}$, as every $g^{X^d_0}$ agrees with $g^{\leq (d-1)}$ on $\left(U^{X^d_0}_0 \setminus \overline{V^{X^d_0}_0}\right) \cap A_0$.

    Injectivity and surjectivity follow from
    \begin{equation*}
        X^d_i \neq Y^d_i \implies U^{X^d_i}_i \cap U^{Y^d_0}_i = \emptyset
    \end{equation*}
    and from the fact that the functions
    \begin{equation*}
        g^{X^d_0}:U^{X^d_0}_0\cap A_0 \to U^{X^d_1}_1 \cap A_1, \quad X^d_0\in \S^d_0
    \end{equation*}
    and
    \begin{equation*}
        g^{\leq (d-1)}\Big\lvert_{A_0 \setminus \bigsqcup_{X^d_0\in \S^d_0} U^{X^d_0}_0}: A_0 \setminus \bigsqcup_{X^d_0\in \S^d_0} U^{X^d_0}_0 \to A_1 \setminus \bigsqcup_{X^d_1\in \S^d_1} U^{X^d_1}_1
    \end{equation*}
    are bijective.

    Let ${\left(g^{\leq d}\right)}^{-1}:A_1 \to A_0$ be the inverse of $g^{\leq d}$. The same argument used to prove that $g^{\leq d}$ is smooth, replacing $U^{X^d_0}_0$ with $U^{X^d_1}_1$ and $g^{\leq (d-1)}, g^{X^d_0}$ with their inverses, proves that ${\left(g^{\leq d}\right)}^{-1}$ is smooth.

    Lastly, $g^{\leq d}:A_0 \to A_1$ is a stratified diffeomorphism since $g^{X^d_0}, X^d_0\in \S^d_0$ and $g^{\leq (d-1)}$ send $X_0\in \S_0$ to $g^{\leq (d-1)}(X_0) \in \S_1$.
\end{proof}

\begin{proposition}\label{proposition:gleqd_isotopic_to_g}
    Let $g^{\leq d}:A_0 \to A_1$ be the stratified diffeomorphism constructed in \Cref{proposition:from_extending_to_single_stratum_to_gleqd}. Then it is isotopic to $g:A_0 \to A_1$ through a family of stratified diffeomorphisms
    \begin{equation*}
        g_t:A_0 \to A_1 \quad t\in [0,1],
    \end{equation*}
    with $g_t^*\w_1 = \w_0$ as Zariski forms on $A_0$ for all $t\in [0,1]$.
\end{proposition}
\begin{proof}
    By concatenating isotopies, it is enough to prove that $g^{\leq (d-1)}$ and $g^{\leq d}$ are isotopic through a family of stratified diffeomorphisms ${\{g_t\}}_{t\in [0,1]}$, with $g_t^*\w_1 = \w_0$ as Zariski forms on $A_0$ for all $t\in [0,1]$.

    Recall that the stratified diffeomorphisms
    \begin{equation*}
        g^{X^d_0},g^{\leq (d-1)}\Big\lvert_{U^{X^d_0}_0 \cap A_0}:U^{X^d_0}_0 \cap A_0 \to U^{X^d_1}_1 \cap A_1
    \end{equation*}
    are isotopic through families of stratified diffeomorphisms ${\{g^{X^d_0}_t\}}_{t\in [0,1]}$, satisfying, for all ${t\in [0,1]}$,
    \begin{enumerate}
        \item ${\left(g^{X^d_0}_t\right)}^*\w_1 = \w_0$ and
        \item $g^{X^d_0}_t$ agrees with $g^{\leq (d-1)}$ on $W^{\leq (d-1)}_0 \cap U^{X^d_0}_0 \cap A_0$ and on $\left(U^{X^d_0}_0 \setminus \overline{V^{X^d_0}_0}\right) \cap A_0$.
    \end{enumerate}

    Define $g_t:A_0 \to A_1,\,t\in[0,1]$ by
    \begin{equation*}
        g_t(p_0) = \begin{cases}
            g^{X^d_0}_t(p_0), & p_0 \in U^{X^d_0}_0, \\
            g^{\leq (d-1)}(p_0), & \text{otherwise}.
        \end{cases}
    \end{equation*}
    Using \Cref{proposition:extensions_nbhd_partition} and the properties of ${\{g^{X^d_0}_t\}}_{t\in [0,1]}$, we get that ${\{g_t\}}_{t\in [0,1]}$ is a family satisfying the required properties.
\end{proof}

\begin{proposition}
    Assume that there exist extensions for $\left(g^{\leq (d-1)}, \G^{\leq (d-1)} \right)$ to $(X^d_0, X^d_1)$ for each $X^d_0 \in \S_0^d$ and $X_1^d \coloneq g^{\leq (d-1)}\left(X^d_0\right) \in \S_1^d$. Then there exist $U^{\leq d}_i, \G^{\leq d}$, and $g^{\leq d}$ satisfying the recursion assumption \ref{subsubsection:construction_assumption} for construction step $d+1$.
\end{proposition}
\begin{proof}
    Let $g^{\leq d}:A_0 \to A_1$ be the stratified diffeomorphism constructed in \Cref{proposition:from_extending_to_single_stratum_to_gleqd}. By \Cref{proposition:gleqd_isotopic_to_g}, it is isotopic to $g:A_0 \to A_1$ through a family of stratified diffeomorphisms ${\{g_t\}}_{t\in [0,1]}$, with $g_t^*\w_1 = \w_0$ as Zariski forms on $A_0$ for all $t\in [0,1]$.

    Define
    \begin{equation*}
        U^{\leq d}_i \coloneq W^{\leq (d-1)}_i \cup \bigcup_{X^d_i \in \S^d_i}W^{X^d_i}_i
    \end{equation*}
    and
    \begin{equation*}
        \G^{\leq d}:U^{\leq d}_0 \to U^{\leq d}_1
    \end{equation*}
    by
    \begin{equation*}
        \G^{\leq d}(p_0) = \begin{cases}
            \G^{X^d_0}(p_0), & p_0 \in W^{X^d_0}_0, \\
            \G^{\leq (d-1)}(p_0), & p_0 \in W^{\leq (d-1)}_0.
        \end{cases}
    \end{equation*}
    Using \Cref{proposition:extensions_nbhd_partition} and the properties of ${\{\G^{X^d_0}\}}_{X^d_0\in \S^d_0}$, we get that $\G^{\leq d}$ is smooth, bijective, and has a smooth inverse.
    Since $\G^{\leq (d-1)}$ and ${\{\G^{X^d_0}\}}_{X^d_0\in \S^d_0}$ restrict to symplectomorphisms on $W^{\leq (d-1)}_i$ and ${\{W^{X^d_i}_i\}}_{X^d_0\in \S^d_0}$ respectively, $\G^{\leq d}$ is a symplectomorphism.
\end{proof}

Therefore, to prove \Cref{thm:main_coisotropic}, it remains to prove:
\begin{thm}\label{thm:extending_to_stratum}
    Let $X^d_0\in \S^d_0$. There exists an extension for $\left(g^{\leq (d-1)}, \G^{\leq (d-1)} \right)$ to $(X^d_0, X^d_1)$.
\end{thm}

We prove \Cref{thm:extending_to_stratum} in the following \Cref{subsection:extending_to_higher_stratum}.

\subsection{Extending to a stratum}\label{subsection:extending_to_higher_stratum}

Let $X_0 \coloneq X^d_0\in \S^d_0$ and $X_1 \coloneq g^{\leq (d-1)}(X_0) \in \S^d_1$. Our strategy for proving \Cref{thm:extending_to_stratum} is the following.
\begin{enumerate}
    \item Apply \Cref{lemma:stratified_spaces_locally_diffeo,lemma:stronger_local_extendability_implies_extendability} to construct a diffeomorphism $\G^{X_0}_{\text{diff}}$ from a neighbourhood of $X_0$ in $M_0$ to a neighbourhood of $X_1$ in $M_1$, that agrees with $\G^{\leq (d-1)}$ on a neighbourhood of $\overline{W^{\leq (d-1)}_0}$ and whose differential along $X_0$ is a symplectic tangent bundle isomorphism.
    \item Construct a tubular neighbourhood of $X_0$ in $M_0$ whose multiplication by $t\in[0,1]$ preserves $A_0$.
    \item Use this multiplication by scalars to apply a strong version of Moser's trick for $\w_0$ and ${\left(\G^{X_0}_{\text{diff}}\right)}^*\w_1$, preserving $A_0$.
\end{enumerate}

\subsubsection{Diffeomorphism around a stratum}
Denote by
\begin{equation*}
    G^{\leq (d-1)}:TM_0\Big\lvert_{U^{\leq (d-1)}_0 \cap A_0} \to TM_1\Big\lvert_{U^{\leq (d-1)}_1 \cap A_1}
\end{equation*}
the symplectic tangent bundle isomorphism over $g^{\leq (d-1)}\Big\lvert_{U^{\leq (d-1)}_0 \cap A_0}$ defined by
\begin{equation*}
    G^{\leq (d-1)} \coloneq D\G^{\leq (d-1)}\Big\lvert_{U^{\leq (d-1)}_0 \cap A_0}.
\end{equation*}

\begin{proposition}\label{proposition:symplectic_bundle_iso_over_X0}
    There exists a symplectic tangent bundle isomorphism
    \begin{equation*}
        G^{X_0}:\left(TM_0\Big\lvert_{X_0}, \w_0 \right) \to \left(TM_1\Big\lvert_{X_1}, \w_1 \right)
    \end{equation*}
    over $g^{\leq (d-1)}\Big\lvert_{X_0}$ which
    \begin{enumerate}
        \item agrees with $G^{\leq (d-1)}$ along $X_0 \cap \overline{V^{\leq (d-1)}_0}$ and
        \item agrees with $D^Zg^{\leq (d-1)}$ on $T^ZA_0\Big\lvert_{X_0}$.
    \end{enumerate}
\end{proposition}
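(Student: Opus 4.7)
The plan is to build $G_{X_0}$ as a section of an affine subbundle $\mathcal{E}\to X_0$ whose fiberwise elements are the admissible symplectic extensions of $D^Zg$, initialized on the closed set $X_0\cap\overline{V_0'}$ by $D\G^{\leq(d-1)}$. (I read the statement's ``$X_0\cap\overline{V_1'}$'' as $X_0\cap\overline{V_0'}=X_0\cap(\G^{\leq(d-1)})^{-1}(\overline{V_1'})$, since $X_0\subset M_0$ while $\overline{V_1'}\subset M_1$.)

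First, I would verify that $D\G^{\leq(d-1)}$ already provides a compatible section over the closed set $X_0\cap\overline{V_0'}\subset X_0\cap U_0^{\leq(d-1)}$: it is a symplectic bundle isomorphism because $\G^{\leq(d-1)}$ is a symplectomorphism on $U_0^{\leq(d-1)}$, and applying the chain rule to $\G^{\leq(d-1)}|_{A_0\cap U_0^{\leq(d-1)}}=g$ forces $D\G^{\leq(d-1)}|_{T^ZA_0}=D^Zg$ there. Next I set up the bundle $\mathcal{E}$ whose fiber is
\begin{equation*}
\mathcal{E}_{p_0}=\{\psi:T_{p_0}M_0\to T_{g(p_0)}M_1 \mid \psi \text{ symplectic},\; \psi|_{T^Z_{p_0}A_0}=D^Z_{p_0}g\},
\end{equation*}
and use symplectic linear algebra to show each $\mathcal{E}_{p_0}$ is a nonempty affine subspace of $\mathrm{Hom}(T_{p_0}M_0,T_{g(p_0)}M_1)$. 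Setting $C_i=T^Z_{p_i}A_i$, choose a symplectic complement $D_i\subset C_i$ of $C_i^{\omega_i}$ (with $D_1=D^Z_{p_0}g(D_0)$) and a Lagrangian complement $L_i$ of $C_i^{\omega_i}$ inside $(C_i^{\omega_i})^{\omega_i}=C_i^{\omega_i}\oplus L_i$; decomposing $\psi|_{L_0}=\alpha+\beta+\gamma$ with $\alpha:L_0\to D_1$, $\beta:L_0\to C_1^{\omega_1}$, $\gamma:L_0\to L_1$, the symplectic condition forces $\alpha=0$, uniquely determines $\gamma$ via the nondegenerate pairing between $L_1$ and $C_1^{\omega_1}$, and leaves $\beta$ free subject to a symmetry condition. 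A direct check then shows that convex combinations of such $\psi$ remain in $\mathcal{E}_{p_0}$, so $\mathcal{E}_{p_0}$ is genuinely affine.

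The extension now proceeds by a partition-of-unity argument on the smooth affine subbundle $\mathcal{E}\subset\mathrm{Hom}(TM_0|_{X_0},g^*TM_1|_{X_1})$: cover $X_0$ by an open neighborhood $W_0$ of $X_0\cap\overline{V_0'}$ on which $D\G^{\leq(d-1)}$ is a local section of $\mathcal{E}$, together with sufficiently small open sets over which local sections are constructed by the above linear-algebraic recipe applied to smoothly varying decompositions in local trivializations, and then average these local sections using a partition of unity subordinate to the cover. The main obstacle is ensuring that the averaged section remains symplectic, which is resolved by the fact that ``being symplectic and restricting to $D^Zg$'' is jointly an affine condition pointwise, so $\mathcal{E}$ is indeed a smooth affine subbundle and a partition-of-unity average is automatically a smooth section of $\mathcal{E}$, delivering the required $G_{X_0}$.
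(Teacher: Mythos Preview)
Your approach is correct and genuinely different from the paper's. The paper fixes compatible almost complex structures $J_i$ to obtain splittings $TM_i|_{X_i}=T^ZA_i|_{X_i}\oplus J_i\bigl((T^ZA_i|_{X_i})^{\omega_i}\bigr)\cong T^ZA_i|_{X_i}\oplus\bigl((T^ZA_i|_{X_i})^{\omega_i}\bigr)^*$ and sets $G_{X_0}=D^Zg\oplus(D^Zg^{-1})^*$; the agreement with $D\G^{\leq(d-1)}$ is then arranged by taking any $J_0$, pushing it forward by $\G^{\leq(d-1)}$, and extending via polar decomposition to a global compatible $J_1$. By contrast, you isolate the key algebraic fact directly: coisotropy makes $C_i^{\omega_i}$ isotropic, which forces the difference of any two symplectic extensions of $D^Zg$ to land in $C_1^{\omega_1}$ and hence the cross-terms in the symplecticity check vanish, so the fiber $\mathcal{E}_{p_0}$ is a nonempty affine space and the whole $\mathcal{E}$ is a smooth affine subbundle over which one can average by a partition of unity. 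Your route is more elementary (no almost complex structures, no polar decomposition) and makes transparent exactly where the coisotropic hypothesis enters; the paper's route yields an explicit formula for $G_{X_0}$ and ties nicely to the classical coisotropic embedding picture.

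Two small points to tighten in your write-up. First, your identity $(C_i^{\omega_i})^{\omega_i}=C_i^{\omega_i}\oplus L_i$ is off: one always has $(C_i^{\omega_i})^{\omega_i}=C_i$, so what you want is $D_i^{\omega_i}=C_i^{\omega_i}\oplus L_i$ with $L_i$ a Lagrangian complement of $C_i^{\omega_i}$ in $D_i^{\omega_i}$ (so that $V_i=D_i\oplus C_i^{\omega_i}\oplus L_i$). Second, ``closed under convex combinations'' is literally weaker than ``affine''; your direct check in fact works for all $t\in\R$, which is what you need to conclude that $\mathcal{E}$ is a smooth affine subbundle (for this you are also implicitly using that $T^ZA_i|_{X_i}$ has locally constant rank, which follows from the local conical model).
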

\begin{proof}
    For $i=0,1$, the subbundles $T^ZA_i\Big\lvert_{X_i}$ are coisotropic in $TM_i\Big\lvert_{X_i}$. Given compatible almost complex structures $J_i$ on $TM_i$, we obtain decompositions
    \begin{equation*}
        TM_i\Big\lvert_{X_i} = T^ZA_i\Big\lvert_{X_i} \oplus J_i\left( {\left( T^ZA_i\Big\lvert_{X_i}\right)}^{\w_i} \right) \simeq T^ZA_i\Big\lvert_{X_i} \oplus {\left( {\left( T^ZA_i\Big\lvert_{X_i}\right)}^{\w_i} \right)}^*
    \end{equation*}
    and a symplectic tangent bundle isomorphism
    \begin{equation*}
        T^ZA_0\Big\lvert_{X_0} \oplus {\left( {\left( T^ZA_0\Big\lvert_{X_0}\right)}^{\w_0} \right)}^* \xrightarrow{\sim} T^ZA_1\Big\lvert_{X_1} \oplus {\left( {\left( T^ZA_1\Big\lvert_{X_1}\right)}^{\w_1} \right)}^*
    \end{equation*}
    given by
    \begin{equation*}
        D^Zg^{\leq (d-1)} \oplus {\left(D^Zg^{\leq (d-1)}\right)}^{-1,*}.
    \end{equation*}
    In particular, such a symplectic bundle isomorphism agrees with $D^Zg^{\leq (d-1)}$ on $T^ZA_0\Big\lvert_{X_0}$.

    It remains to find compatible almost complex structures $J_i$ such that the induced isomorphism agrees with $G^{\leq (d-1)}$ along $X_0 \cap \overline{V^{\leq (d-1)}_0}$. Let $J_0$ be an $\w_0$-compatible almost complex structure on $TM_0$. It is enough to find a compatible almost complex structure $J_1$ on $TM_1$ such that
    \begin{equation*}
        J_1 = {\left(G^{\leq (d-1)}\right)}_*J_0
    \end{equation*}
    along $X_1 \cap \overline{V^{\leq (d-1)}_1}$.

    Let $h_0$ be the $\w_0$-compatible Riemannian metric on $TM_0$ induced by $J_0$. Using partitions of unity and $\overline{V^{\leq (d-1)}_1} \subset U^{\leq (d-1)}_1$, use $\G^{\leq (d-1)}$ to construct a Riemannian metric $h_1$ on $TM_1$ such that
    \begin{equation*}
        h_1\Big\lvert_{\overline{V^{\leq (d-1)}_1} \cap X_1} = {\left(G^{\leq (d-1)}\right)}_*h_0.
    \end{equation*}
    Using the polar decomposition as in \cite[Lecture~2]{weinstein_lectures}, let $J_1$ be the $\w_1$-compatible almost complex structure induced by $h_1$. Since $h_1$ is $\w_1$-compatible on $\overline{V^{\leq (d-1)}_1} \cap X_1$, the $\w_1$-compatible Riemannian metric induced by $J_1$ agrees with $h_1$ on that subset. Since $\w_1, h_1$ are \linebreak
    $G^{\leq (d-1)}$-related to $\w_0, h_0$ on $\overline{V^{\leq (d-1)}_1} \cap X_1$, it follows that $J_1 = \G^{\leq (d-1)}_*J_0$ on that subset.
\end{proof}

Let
\begin{equation*}
    G^{X_0}:\left(TM_0\Big\lvert_{X_0}, \w_0 \right) \to \left(TM_1\Big\lvert_{X_1}, \w_1 \right)
\end{equation*}
be as in \Cref{proposition:symplectic_bundle_iso_over_X0}.

\begin{proposition}\label{proposition:coisotropic_local_extendability}
    Let $p_0\in X_0$ and $p_1=g^{\leq (d-1)}(p_0)\in X_1$. Then there exist
    \begin{enumerate}
        \item neighbourhoods $U^{p_i}_i$ of $p_i$ in $M_i$ and
        \item a diffeomorphism $\G^{p_0}:U^{p_0}_0 \to U^{p_1}_1$ which agrees with $g^{\leq (d-1)}$ on $A_0\cap U^{p_0}_0$ and whose differential $D\G^{p_0}$ agrees with $G^{X_0}$ along $X_0$.
    \end{enumerate}
\end{proposition}
\begin{proof}
    Since $A_i$ are smoothly locally trivial with conical fibers, \Cref{lemma:stratified_spaces_locally_diffeo} yields conical charts $\theta_{p_0}:U^{p_0}_0 \to W$ and $\theta_{p_1}:U^{p_1}_1 \to W$ of $p_0, p_1$ respectively such that $\theta_{p_1}^{-1} \circ \theta_{p_0}$ restricts to $g^{\leq (d-1)}$ on $A_0 \cap U^{p_0}_0$. Write
    \begin{equation*}
        X_W \coloneq \theta_{p_0}\left(X_0 \cap U^{p_0}_0 \right) = \theta_{p_1}\left(X_1 \cap U^{p_1}_1 \right),
    \end{equation*}
    which by definition satisfies
    \begin{equation*}
        X_W = \left(\R^k\times \{0\}\right) \cap W \subset \R^k \times \R^{n-k}.
    \end{equation*}

    Let $(x,y)$ be coordinates on $W\subset \R_x^k\times \R_y^{n-k}$ and $(x,y,v_x,v_y)$ be coordinates on $ TW$. Define $\tau:W \to TW\Big\lvert_{X_W}$ by
    \begin{equation*}
        \tau(x,y) = (x, 0, x, y)
    \end{equation*}
    and $\sigma:TW\Big\lvert_{X_W} \to \R^k \times \R^{n-k}$ by
    \begin{equation*}
        \sigma(x, 0, v_x, v_y) = (v_x, v_y).
    \end{equation*}
    Let $\G_W:W \to \R^k \times \R^{n-k}$ be the map given by the composition
    \[\begin{tikzcd}
        W & {TW\Big\lvert_{X_W}} & {TM_0\Big\lvert_{X_0}} & {TM_1\Big\lvert_{X_1}} & {TW\Big\lvert_{X_W}} & {\R^k \times \R^{n-k}}
        \arrow["\tau", from=1-1, to=1-2]
        \arrow["{D\theta_{p_0}^{-1}}", from=1-2, to=1-3]
        \arrow["{G_{X_0}}", from=1-3, to=1-4]
        \arrow["{D\theta_{p_1}}", from=1-4, to=1-5]
        \arrow["\sigma", from=1-5, to=1-6].
    \end{tikzcd}\]
    We verify that $\G_W$ has the following properties:
    \begin{enumerate}
        \item\label{property:G_W_differential} $D\G_W = D\theta_{p_1} \circ G_{X_0} \circ D\theta_{p_0}^{-1}$ along $X_W$.
        \item\label{property:G_W_diffeo} After possibly shrinking $W$, the map $\G_W:W \to \G_W(W)$ is a diffeomorphism.
        \item\label{property:G_W_preserves_A_W} $\G_W$ fixes $A_W \coloneq \theta_{p_0}\left(A_0 \cap U_{p_0} \right) = \theta_{p_1}\left(A_1 \cap U_{p_1} \right)$.
    \end{enumerate}

    Property \cref{property:G_W_differential} follows by calculation in local coordinates. Property \cref{property:G_W_diffeo} follows from the inverse function theorem.
    To prove property \cref{property:G_W_preserves_A_W}, note that by the definition of a conical chart (\Cref{definition:smoothly_locally_trivial}), the subset $A_W$ is conical and therefore $\tau(A_W) \subset T^ZA_W\Big\lvert_{X_W}$. Since both $G_{X_0}$ and $D\theta_{p_1}^{-1} \circ D\theta_{p_0}$ restrict to $D^Zg^{\leq (d-1)}$ on $T^ZA_0$, the bundle isomorphism
    \begin{equation*}
        \left(D\theta_{p_1} \circ G_{X_0} \circ D\theta_{p_0}^{-1}\right):TW\Big\lvert_{X_W} \to TW\Big\lvert_{X_W}
    \end{equation*}
    restricts on $T^ZA_W\Big\lvert_{X_W}$ to the identity, and it follows that for $(x,y)\in A_W$ we have
    \begin{align*}
        \G_W(x,y)
        =& \left(\sigma \circ D\theta_{p_1} \circ G_{X_0} \circ D\theta_{p_0}^{-1}\right)\underbrace{(x, 0, x, y)}_{\in T^ZA_W\Big\lvert_{X_W}} \\
        =& \sigma(x,0,x,y) \\
        =& (x,y).
    \end{align*}
    Finally, we shrink $W$ such that $\G_W(W)$ is contained in the domain of $\theta_{p_1}^{-1}$ and define ${\G^{p_0} = \theta_{p_1}^{-1} \circ \G_W \circ \theta_{p_0}}$. One can check that $\G^{p_0}$ satisfies the required Properties.
\end{proof}

\begin{proposition}\label{proposition:existence_of_G_X_0}
    Possibly shrinking $U^{X_i}_i$, there exists a diffeomorphism
    \begin{equation*}
        \G^{X_0}_{\text{diff}}:U^{X_0}_0 \to U^{X_1}_1
    \end{equation*}
    which
    \begin{enumerate}
        \item agrees with $g^{\leq (d-1)}$ on $U^{X_0}_0 \cap A_0$,
        \item\label{property:G_X_0_agrees_with_G_leq_on_V_leq} agrees with $\G^{\leq (d-1)}$ on $V^{\leq (d-1)}_0 \cap U^{X_0}_0$, and
        \item whose differential $D\G^{X_0}_{\text{diff}}$ agrees with $G^{X_0}$ along $X_0$.
    \end{enumerate}
\end{proposition}
\begin{proof}
    By \Cref{proposition:coisotropic_local_extendability}, for every $p_0\in X_0$, there exist a neighbourhood $U^{p_0}_0$ of $p_0$ in $U^{X_0}_0$ and a smooth map $\G^{p_0}:U^{p_0}_0 \to M_1$ that
    \begin{enumerate}
        \item agrees with $g^{\leq (d-1)}$ on $A_0\cap U^{p_0}_0$ and
        \item whose differential $D\G^{p_0}$ agrees with $G^{X_0}$ along $X_0\cap U^{p_0}_0$.
    \end{enumerate}

    Applying \Cref{lemma:stronger_local_extendability_implies_extendability} with the sets $M_i = U_i^{X_i},\ {B_i=X_i},\ {A_i= U^{X_i}_i \cap A_i}$, the smooth maps ${g=g^{\leq (d-1)}\Big\lvert_{U^{X_0}_0 \cap A_0}},\ {G=G^{X_0}}$, the additional data ${O_0 = U^{\leq (d-1)}_0},{P_0 = V^{\leq (d-1)}_0}$, and ${\G^{O_0}=\G^{\leq (d-1)}}$, after possibly shrinking $U^{X_i}_i$, we obtain a smooth map
    \begin{equation*}
        \G^{X_0}_{\text{diff}}:U^{X_0}_0 \to U^{X_1}_1
    \end{equation*}
    which
    \begin{enumerate}
        \item agrees with $g^{\leq (d-1)}$ on $U^{X_0}_0 \cap A_0$,
        \item agrees with $\G^{\leq (d-1)}$ on $V^{\leq (d-1)}_0 \cap U^{X_0}_0$, and
        \item whose differential $D\G^{X_0}_{\text{diff}}$ agrees with $G^{X_0}$ along $X_0$.
    \end{enumerate}

    Finally, apply \Cref{lemma:inverse_function_revisited} and possibly shrink $U^{X_i}_i$ so that $\G^{X_0}_{\text{diff}}$ is a diffeomorphism.
\end{proof}

Let
\begin{equation*}
    \G^{X_0}_{\text{diff}}:U^{X_0}_0 \to U^{X_1}_1
\end{equation*}
be as in \Cref{proposition:existence_of_G_X_0}.

\subsubsection{Tangential tubular neighbourhood}
In this subsection we work with Euler-like vector fields and their induced tubular neighbourhoods. See \Cref{appendix:euler_likes} for a quick introduction to Euler-like vector fields.

The following lemma is discussed in greater generality in~\cite[Section~4]{zimhony2024commutative}. We give a brief proof here.

\begin{lemma}\label{lemma:tangent_euler_like}
    Let $M$ be a manifold and let $(A,\S)\subset M$ be a stratified subspace, smoothly locally trivial with conical fibers. Let $X^d\in \S$ be a stratum.
    Then there exists an Euler-like vector field $\E$ along $X$ that is tangent to higher strata, i.e., $\E(y) \in T_yY$ for $Y>X$ and $y\in Y$ in the domain of $\E$.
\end{lemma}
\begin{proof}
    For each $p\in X$ let $\theta_p:U_p \to W_p$ be a conical chart of $p$ as in \Cref{definition:smoothly_locally_trivial}. Let $(x_1,\ldots, x_d, y_1, \ldots, y_{n-d})$ be coordinates on $W_p$ such that $\theta_p(X\cap U_p) = \{y=0\}\cap W_p$. Write
    \begin{equation*}
        \E^p \coloneq {\left(\theta_p^{-1}\right)}_*\left(\sum_{i=1}^{n-d}y_i\partial y_i\right),
    \end{equation*}
    which is Euler-like along $X\cap U_p$ and tangent to higher strata by the definition of a conical chart.

    Using paracompactness, there exist a discrete subset $I\subset X$ and a partition of unity ${\{\varphi_{p}\}}_{p\in I}$ in
    \begin{equation*}
        U\coloneq \bigcup_{p\in X}U_p,
    \end{equation*}
    with $\supp{\varphi_{p}} \subset U_{p}$. The vector field
    \begin{equation*}
        \E\coloneq \sum_{p\in I} \varphi_{p} \E^{p}
    \end{equation*}
    is defined on $U$, Euler-like along $N$ (by \Cref{lemma:euler_like_convex}), and tangent to higher strata by construction.
\end{proof}

\begin{proposition}\label{proposition:good_euler_like}
    There exists an Euler-like vector field $\E_0$ along $X_0$ in $M_0$, inducing a tubular neighbourhood embedding $\Psi:\nu(M_0, X_0) \to U^{X_0}_0$ and a multiplication by scalars (\Cref{definition:induced_multiplication_by_scalar})
    \begin{equation*}
        m^t_{\E_0}:\Psi\left(\nu(M_0, X_0)\right) \to \Psi\left(\nu(M_0, X_0)\right), \quad t\in[0,1],
    \end{equation*}
    with the following properties.
    \begin{enumerate}
        \item $\E_0$ is tangent to higher strata.
        \item\label{property:open_closed_euler_like_thing} If $\overline{W^{\leq(d-1)}_0}$ intersects a fiber of $m^0_{\E_0}$, then the whole fiber is contained in $V^{\leq (d-1)}_0$.
    \end{enumerate}
\end{proposition}
\begin{proof}
    Apply \Cref{lemma:tangent_euler_like}, \Cref{thm:euler_like_tubular}, and \Cref{lemma:closed_open_sets_euler}.
\end{proof}

Let $\E^{X_0}$ be an Euler-like vector field along $X_0$ in $U^{X_0}_0$ as in \Cref{proposition:good_euler_like}. Possibly shrinking $U^{X_i}_i$, we assume that $U^{X_0}_0$ is the image of the induced tubular neighbourhood embedding
\begin{equation*}
    \Psi:\nu(M_0, X_0) \xrightarrow{\sim} U^{X_0}_0
\end{equation*}
and $U^{X_1}_1 = \G^{X_0}_{\text{diff}}\left(U^{X_0}_0\right)$.
Let
\begin{equation*}
    m^t_{\E_0}:U^{X_0}_0 \to U^{X_0}_0, \quad t\in [0,1]
\end{equation*}
be the induced multiplication by scalars. By the assumption that $\E^{X_0}$ is tangent to higher strata, \Cref{lemma:tangential_vector_field_stratified_diffeo}, and
\begin{equation*}
    \frac{d}{dt}\bigg\lvert_{t=0}m^{\exp(t)}_{\E_0}(p) = \E^{X_0}(p),
\end{equation*}
the map $m^t_{\E_0}$ preserves strata for $t\in (0,1]$. Since $U^{X_0}_0 \cap A_0$ is closed in $U^{X_0}_0$, it follows that $U^{X_0}_0 \cap A_0$ is an invariant subset of $m^t_{\E_0}$ for $t\in [0,1]$.

\subsubsection{Moser's trick revisited}

We follow the proof of \Cref{lemma:using_beta_for_moser} with some modifications.

\begin{proposition}
    Consider $U^{X_0}_0$ with the two symplectic forms $\w_0, {\left(\G^{X_0}_{\text{diff}}\right)}^*\w_1$. Then $\w_0$ and ${\left(\G^{X_0}_{\text{diff}}\right)}^*\w_1$ agree
    \begin{enumerate}
        \item on $T^ZA_0\Big\lvert_{U^{X_0}_0 \cap A_0}$,
        \item along $V^{\leq (d-1)}_0 \cap U^{X_0}_0$, and
        \item along $X_0$.
    \end{enumerate}
\end{proposition}
\begin{proof}
    Using the properties of $\G^{X_0}_{\text{diff}}$ from \Cref{proposition:existence_of_G_X_0} and ${\left(g^{\leq (d-1)}\right)}^*\w_1 = \w_0$ as Zariski forms on $T^ZA_0$.
\end{proof}

Let $R:[0,1]\times U^{X_0}_0 \to U^{X_0}_0$ be the smooth deformation retraction $U^{X_0}_0 \to X^0$ defined by $R(t, p) = m^t_{\E_0}(p)$.

Let $\alpha \coloneq \w_0 - {\left(\G^{X_0}_{\text{diff}}\right)}^*\w_1$, which is a closed $2$-form that vanishes along $X_0$, and write
\begin{equation*}
    \w_t = (1-t)\w_0 + t{\left(\G^{X_0}_{\text{diff}}\right)}^*\w_1
\end{equation*}
and
\begin{equation*}
    \beta \coloneq {\left(\pi_V\right)}_*R^*\alpha = \int_{0}^{1}i_t^* \i_{\frac{\partial}{\partial t}}R^*\alpha \,dt.
\end{equation*}

\begin{proposition}\label{proposition:beta_properties}
    The $1$-form $\beta$ satisfies
    \begin{enumerate}
        \item $d\beta = \alpha$,
        \item $\beta$ vanishes along $X_0$, and
        \item\label{property:beta_vanishes_on_zariski_tangent} $\beta$ vanishes on $T^ZA_0\Big\lvert_{U^{X_0}_0 \cap A_0}$.
    \end{enumerate}
\end{proposition}
\begin{proof}
    Using the fact that $U^{X_0}_0 \cap A_0$ is an invariant subset of $R$ and applying \Cref{lemma:relative_poincare}.
\end{proof}

Define the time-dependent vector field ${\left(X_t\right)}_{t\in [0,1]}$ on $U^{X_0}_0$ by
\begin{equation*}
    \i_{X_t}\w_t = \beta
\end{equation*}
and let $\Phi^{t_0}_{(X_t)}$ be its flow. Since $\beta$ vanishes along $X_0$, so does ${\left(X_t\right)}_{t\in [0,1]}$.

\begin{proposition}\label{proposition:creating_V_X_0}
    There exists a neighbourhood $V^{X_0}_0$ of $X_0$ in $U^{X_0}_0$ satisfying the following properties.
    \begin{enumerate}
        \item The time-1 flow $\Phi^{1}_{(X_t)}$ is well defined on $V^{X_0}_0$, i.e., $\Phi^{1}_{(X_t)}\left(V^{X_0}_0\right) \subset U^{X_0}_0$.
        \item $\overline{V^{X_0}_0} \subset U^{X_0}_0$, where the closure is taken in $M^{\geq d}_0$.
        \item\label{property:image_of_V_0_closed} The image of $\overline{V^{X_0}_0}$ under $\G^{X_0}_{\text{diff}}$ is closed in $M^{\geq d}_1$.
    \end{enumerate}
\end{proposition}
\begin{proof}
    Applying \Cref{lemma:flow_domain_vanish_neighbourhood} and possibly shrinking. For the last property, use the fact that $M_i$ are normal as topological spaces.
\end{proof}

Let $V^{X_0}_0 \subset U^{X_0}_0$ as in \Cref{proposition:creating_V_X_0}.

Let $\varphi:U^{X_0}_0 \to [0,1]$ be a smooth bump function, supported in $V^{X_0}_0$, which is identically $1$ on a neighbourhood $W'$ of $X_0$. Consider the time-dependent vector field ${\left(\varphi X_t\right)}_{t\in [0,1]}$, whose time-1 flow $\Phi^{t_0}_{(\varphi X_t)}:U^{X_0}_0 \to U^{X_0}_0$ is well defined by construction.

Let
\begin{equation*}
    W^{X_0}_0 \coloneq \bigcap_{t_0\in [0,1]}{\left(\Phi^{t_0}_{(\varphi X_t)}\right)}^{-1}(W').
\end{equation*}

\begin{proposition}
    $W^{X_0}_0$ is an open neighbourhood of $X_0$, with $\overline{W^{X_0}_0} \subset V^{X_0}_0$.
\end{proposition}
\begin{proof}
    The map $[0,1]\times U^{X_0}_0 \to U^{X_0}_0$ defined by $(t_0, p_0) \to {\left(\Phi^{t_0}_{(\varphi X_t)}\right)}^{-1}(p_0)$ is proper, and therefore it is closed. It follows that the subset
    \begin{equation*}
        U^{X_0}_0 \setminus W^{X_0}_0 = \bigcup_{t_0\in [0,1]}{\left(\Phi^{t_0}_{(\varphi X_t)}\right)}^{-1}\left(U^{X_0}_0 \setminus W'\right),
    \end{equation*}
    which is the image of $[0,1]\times\left(U^{X_0}_0 \setminus W'\right)$ under this map, is closed.
    Since ${\left(\Phi^{t_0}_{(\varphi X_t)}\right)}^{-1}$ fixes $X_0$ for all $t_0\in [0,1]$, the subset $W^{X_0}_0$ contains $X_0$. Since
    \begin{equation*}
        \overline{W^{X_0}_0}\subset \overline{W'} \subset \supp(\varphi) \subset V^{X_0}_0,
    \end{equation*}
    we have $\overline{W^{X_0}_0} \subset V^{X_0}_0$.
\end{proof}

Write $\G^\w \coloneq \Phi^{1}_{(\varphi X_t)}:U^{X_0}_0 \to U^{X_0}_0$.

\begin{proposition}\label{proposition:restricts_to_symplec_on_W_X_0}
    ${\left(\G^\w\right)}^*{\left(\G^{X_0}_{\text{diff}}\right)}^*\w_1 = \w_0$ on $W^{X_0}_0$.
\end{proposition}
\begin{proof}
    Let $p_0 \in W^{X_0}_0$. The definition of $W^{X_0}_0$ implies that for all $t_0\in [0,1]$ we have
    \begin{equation*}
        \Phi^{t_0}_{(\varphi X_t)}(p_0) = \Phi^{t_0}_{(X_t)}(p_0).
    \end{equation*}
    Using the same arguments as in the proof of \Cref{lemma:using_beta_for_moser}, it follows that
    \begin{equation*}
        \frac{d}{ds}\bigg\lvert_{t_0}{\Bigl({\left(\Phi^{s}_{(X_t)}\right)}^*\left(\w_s\right)\Bigr)} = 0,
    \end{equation*}
    and therefore ${\left(\Phi^{1}_{(X_t)}\right)}^*{\left(\G^{X_0}_{\text{diff}}\right)}^*\w_1 = \w_0$.

    Let $U^{p_0}$ be a neighbourhood of $p_0$ contained in $W^{X_0}_0$. Then the restriction of $\Phi^{1}_{(\varphi X_t)}$ to $U^{p_0}$ agrees with the restriction of $\Phi^{1}_{(X_t)}$ to ${U^{p_0}}$, and in particular their differential agree at $p_0$. Thus,
    \begin{align*}
        \left({\left(\G^\w\right)}^*{\left(\G^{X_0}_{\text{diff}}\right)}^*\w_1\right)(p_0) &= \left({\left(\Phi^{1}_{(\varphi X_t)}\right)}^*{\left(\G^{X_0}_{\text{diff}}\right)}^*\w_1\right)(p_0) \\
        &= \left({\left(\Phi^{1}_{(X_t)}\right)}^*{\left(\G^{X_0}_{\text{diff}}\right)}^*\w_1\right)(p_0) \\
        &= \w_0(p_0).
    \end{align*}
\end{proof}

\begin{proposition}\label{proposition:G_w_fixes_W_X_0}
    The map $\G^\w$ fixes $\overline{W^{\leq (d-1)}_0} \cap U^{X_0}_0$.
\end{proposition}
\begin{proof}
    Let $p_0\in \overline{W^{\leq (d-1)}_0} \cap U^{X_0}_0$. By \Cref{proposition:good_euler_like}\eqref{property:open_closed_euler_like_thing}, $m^t_{\E_0}(p_0) \in V^{\leq (d-1)}_0 \cap U^{X_0}_0$ for all $t\in [0,1]$. Since $\alpha$ vanishes along $V^{\leq (d-1)}_0 \cap U^{X_0}_0$, for all $v_0 \in T_{p_0}M_0$ we have
    \begin{align*}
        \beta_{p_0}(v_0) &= \int_{0}^{1}{\left(i_t^* \i_{\frac{\partial}{\partial t}}R^*\alpha\right)}_{p_0}(v_0) \,dt \\
        &= \int_{0}^{1}\underbrace{{\alpha}_{m^t_{\E_0}(p_0)}}_0\left(\frac{d}{ds}\bigg\lvert_{t}m^s_{\E_0}(p_0), Dm^t_{\E_0}\cdot v_0\right) \\
        &= 0.
    \end{align*}
    Therefore, $X_t(p_0) = 0$ for all $t\in [0,1]$ and $\G^\w(p_0) = \Phi^{1}_{(\varphi X_t)}(p_0) = p_0$.
\end{proof}

\begin{proposition}
    $U^{X_0}_0\cap A_0$ is an invariant subset of $\G^\w$. Moreover, considering $U^{X_0}_0\cap A_0$ with its induced stratification, $\G^\w$ restricts to a stratified diffeomorphism.
\end{proposition}
\begin{proof}
    By \Cref{proposition:beta_properties}\eqref{property:beta_vanishes_on_zariski_tangent}, $\beta_{p_0}$ vanishes on $T^Z_{p_0}A_0$, and therefore $X_t(p_0) \in {\left(T^Z_{p_0}A_0\right)}^{\w_t}$ for $t\in [0,1]$.
    For all $t\in [0,1]$, the restriction of $\w_t$ to $T^Z_{p_0}A_0$ agrees with $\w_0$.
    Therefore, for all $t\in [0,1]$, we have ${\left(T^Z_{p_0}A_0\right)}^{\w_t} = {\left(T^Z_{p_0}A_0\right)}^{\w_0}$ and the subspace $T^Z_{p_0}A_0 \subset \left(T_{p_0}M_0, \w_t(p_0)\right)$ is coisotropic.
    It follows that $X_t(p_0) \in {\left(T^Z_{p_0}A_0\right)}^{\w_0}$.

    Let $Y\in \S$ be the stratum containing $p_0$. By the strongly coisotropic assumption on $\left(A_0, \S_0\right)$, we get
    \begin{equation*}
        X_t(p_0) \in {\left(T^Z_{p_0}A_0\right)}^{\w_0} \subset T_{p_0}Y,
    \end{equation*}
    i.e., the vector field $X_t$ is tangent to strata for $t\in [0,1]$. Applying \Cref{lemma:tangential_vector_field_stratified_diffeo} finishes the proof.
\end{proof}

Let
\begin{equation*}
    g^\w\coloneq \G^\w\Big\lvert_{U^{X_0}_0 \cap A_0}:U^{X_0}_0 \cap A_0 \to U^{X_0}_0 \cap A_0.
\end{equation*}

\begin{proposition}\label{proposition:g_w_isotopic_to_id}
    The stratified diffeomorphism
    \begin{equation*}
        g^\w:U^{X_0}_0 \cap A_0 \to U^{X_0}_0 \cap A_0
    \end{equation*}
    is isotopic to $\Id:U^{X_0}_0 \cap A_0 \to U^{X_0}_0 \cap A_0$ through a family of stratified diffeomorphisms
    \begin{equation*}
        g_t:U^{X_0}_0 \cap A_0 \to U^{X_0}_0 \cap A_0, \quad t\in [0,1],
    \end{equation*}
    such that, for each $t\in [0,1]$, the stratified diffeomorphism $g_t$ satisfies the following properties.
    \begin{enumerate}
        \item $g_t^*\w_t = \w_0$ as Zariski forms on $U^{X_0}_0 \cap A_0$.
        \item $g_t$ fixes $W^{X_0}_0 \cap U^{X_0}_0 \cap A_0$ and $\left(U^{X_0}_0 \setminus \overline{V^{X^d_0}_0}\right) \cap A_0$.
    \end{enumerate}

    In particular, ${\left(g^\w\right)}^*{\left(\G^{X_0}_{\text{diff}}\right)}^*\w_1 = \w_0$ as Zariski forms on $A_0$, and $g^\w$ fixes $W^{X_0}_0 \cap U^{X_0}_0 \cap A_0$ and  $\left(U^{X_0}_0 \setminus \overline{V^{X^d_0}_0}\right) \cap A_0$.
\end{proposition}
\begin{proof}
    For all $t_0 \in [0,1]$, write
    \begin{equation*}
        g_{t_0} \coloneq \Phi^{t_0}_{(\varphi X_t)}\Big\lvert_{U^{X_0}_0 \cap A_0}.
    \end{equation*}
    Then ${\{g_t\}}_{t\in [0,1]}$ is a family of stratified diffeomorphisms, with $g_0 = \Id$ and $g_1 = g^\w$.

    Differentiating ${\left(\Phi^{s}_{(\varphi X_t)}\right)}^*\w_s$ we obtain
    \begin{align*}
        \frac{d}{ds}\bigg\lvert_{t_0}{\left(\Phi^{s}_{(\varphi X_t)}\right)}^*\w_s
        &= {\left(\Phi^{t_0}_{(\varphi X_t)}\right)}^*\left(\frac{d}{ds}\bigg\lvert_{t_0}\w_s + \L_{(\varphi X_{t_0})}\w_{t_0} \right) \\
        &= {\left(\Phi^{t_0}_{(\varphi X_t)}\right)}^*\left(-\alpha + d\underbrace{\i_{(\varphi X_{t_0})}\w_{t_0}}_{\varphi \beta} + \i_{(\varphi X_{t_0})}\underbrace{d\w_{t_0}}_0\right) \\
        &= {\left(\Phi^{t_0}_{(\varphi X_t)}\right)}^*\left(-\alpha + d\varphi \wedge \beta + \varphi d\beta \right) \\
        &= {\left(\Phi^{t_0}_{(\varphi X_t)}\right)}^*\left((-1 + \varphi)\alpha + d\varphi \wedge \beta\right).
    \end{align*}
    Recall that $\alpha$ vanishes on $T^ZA_0$ and, by \Cref{proposition:beta_properties}\eqref{property:beta_vanishes_on_zariski_tangent}, so does $\beta$.
    It follows that $\left((-1 + \varphi)\alpha + d\varphi \wedge \beta\right)$ vanishes on $T^ZA_0$.
    Since $D\Phi^{t_0}_{(\varphi X_t)}$ sends $T^ZA_0$ to $T^ZA_0$, the $2$-form
    \begin{equation*}
        \frac{d}{ds}\bigg\lvert_{t_0}{\left(\Phi^{s}_{(\varphi X_t)}\right)}^*\w_s = {\left(\Phi^{t_0}_{(\varphi X_t)}\right)}^*\left((-1 + \varphi)\alpha + d\varphi \wedge \beta\right)
    \end{equation*}
    vanishes on $T^ZA_0$.

    Let $p_0\in U^{X_0}_0 \cap A_0$ and $u_0,v_0 \in T^Z_{p_0}A_0$. Then
    \begin{equation*}
        \frac{d}{ds}\bigg\lvert_{t_0}\i_{u_0}\i_{v_0}{\left(\Phi^{s}_{(\varphi X_t)}\right)}^*\w_s = \i_{u_0}\i_{v_0}\frac{d}{ds}\bigg\lvert_{t_0}{\left(\Phi^{s}_{(\varphi X_t)}\right)}^*\w_s = 0,
    \end{equation*}
    and therefore $g_t^*\w_t = \w_0$ as Zariski forms on $A_0$.

    The second property follows from \Cref{proposition:G_w_fixes_W_X_0} and $\supp\varphi \subset V^{X_0}_0$.
\end{proof}
\begin{remark}
    $\w_t = \w_0$ as Zariski forms on $U^{X_0}_0 \cap A_0$ for all $t\in [0,1]$. So, equivalently, the stratified diffeomorphisms $g_t$ preserve $\w_0$ as a Zariski form on $U^{X_0}_0 \cap A_0$ for all $t\in [0,1]$.
\end{remark}

We are now ready to prove \Cref{thm:extending_to_stratum}.

\begin{proof}[Proof of \Cref{thm:extending_to_stratum}]
    Let
    \begin{equation*}
        \G^{X_0} \coloneq \G^{X_0}_{\text{diff}} \circ \G^\w: U^{X_0}_0 \to U^{X_1}_1
    \end{equation*}
    and
    \begin{equation*}
        g^{X_0} \coloneq \G^{X_0}_{\text{diff}}\Big\lvert_{U^{X_0}_0 \cap A_0} \circ g^\w:U^{X_0}_0 \cap A_0 \to U^{X_1}_1 \cap A_1.
    \end{equation*}
    Write
    \begin{equation*}
        V^{X_1}_1 \coloneq \G^{X_0}\left(V^{X_0}_0\right), \quad W^{X_1}_1 \coloneq \G^{X_0}\left(W^{X_0}_0\right).
    \end{equation*}

    We claim that the data $\left(U^{X_i}, V^{X_i}_i, W^{X_i}_i, \G^{X_0}, g^{X_0}\right)$ is an extension for $\left(g^{\leq (d-1)}, \G^{\leq (d-1)} \right)$ to $(X_0, X_1)$, see \Cref{definition:extension_for_stratum}.

    By \Cref{proposition:creating_V_X_0}\eqref{property:image_of_V_0_closed}, $\G^{X_0}_{\text{diff}}\left(\overline{V^{X_0}_0}\right)$ is closed in $M^{\geq d}_1$. Since $\supp\varphi \subset V^{X_0}_0$, the diffeomorphism $\G^\w = \Phi^{1}_{(\varphi X_t)}$ preserves $V^{X_0}_0$, and therefore
    \begin{equation*}
        \G^{X_0}\left(\overline{V^{X_0}_0}\right)= \G^{X_0}_{\text{diff}}\left(\overline{V^{X_0}_0}\right)
    \end{equation*}
    is closed in $M^{\geq d}_1$. It follows that
    \begin{equation*}
        X_i \subset W^{X_i}_i \subset \overline{W^{X_i}_i} \subset V^{X_i}_i \subset \overline{V^{X_i}_i} \subset U^{X_i}_i
    \end{equation*}
    for $i=0,1$, where the closures are taken in $M^{\geq d}_i$.

    By \Cref{proposition:restricts_to_symplec_on_W_X_0}, $\G^{X_0}$ restricts to a symplectomorphism on $W^{X_0}_0$.

    By \Cref{proposition:G_w_fixes_W_X_0}, $\G^\w$ fixes $\overline{W^{\leq (d-1)}_0} \cap U^{X_0}_0$.
    By \Cref{proposition:existence_of_G_X_0}\eqref{property:G_X_0_agrees_with_G_leq_on_V_leq}, $\G^{X_0}_{\text{diff}}$ agrees with $\G^{\leq (d-1)}$ on $V^{\leq (d-1)}_0 \cap U^{X_0}_0$, and in particular on $W^{\leq (d-1)}_0 \cap U^{X_0}_0$.
    It follows that $\G^{X_0}$ agrees with $\G^{\leq (d-1)}$ on $W^{\leq (d-1)}_0 \cap U^{X_0}_0$.

    Composing the isotopies $g_t$ from \Cref{proposition:g_w_isotopic_to_id} with $\G^{X_0}_{\text{diff}}\Big\lvert_{U^{X_0}_0 \cap A_0}$, we obtain a family of stratified diffeomorphisms
    \begin{equation*}
        g_t:U^{X_0}_0 \cap A_0 \to U^{X_1}_1 \cap A_1
    \end{equation*}
    with $g_0 = g^{\leq (d-1)}\Big\lvert_{U^{X_0}_0 \cap A_0}$ and $g_1 = g^{X_0}$. For each $t\in [0,1]$, the stratified diffeomorphism $g_t$ satisfies the following properties.
    \begin{enumerate}
        \item $g_t^*\w_1 = \w_0$ as Zariski forms on $A_0$.
        \item $g_t$ agrees with $g^{\leq (d-1)}$ on $W^{\leq (d-1)}_0 \cap U^{X^d_0}_0 \cap A_0$ and on $\left(U^{X^d_0}_0 \setminus \overline{V^{X^d_0}_0}\right) \cap A_0$.
    \end{enumerate}
    In particular, ${\left(g^{X_0}\right)}^*\w_1 = \w_0$ as Zariski forms on $A_0$ and $g^{X_0}$ agrees with $g^{\leq (d-1)}$ on $W^{\leq (d-1)}_0 \cap U^{X^d_0}_0 \cap A_0$ and on $\left(U^{X^d_0}_0 \setminus \overline{V^{X^d_0}_0}\right) \cap A_0$.
\end{proof}

\section{Exactness for isotropic stratified subspaces}\label{section:exactness_for_isotropic}

Recall that for a symplectic manifold $(M, \w)$ and an isotropic submanifold $A \subset M$, the symplectic form is exact on a tubular neighbourhood of $A$. \Cref{thm:exactness_of_symplectic_form} is an analogue of this assertion for our case, where $A$ is allowed to be singular. In this section, we relax the assumption on $A$ to just Whitney (B) regularity, rather than requiring it to be smoothly locally trivial with conical fibers.

\begin{thm}\label{thm:exactness_of_symplectic_form}
    Let $(V, \w)$ be a symplectic manifold and $(A, \S)$ a Whitney (B) regular isotropic stratified subspace of it. Let $R:[0,1]\times V \to V$ be a smooth weak deformation retraction of $V$ to $A$ (\Cref{definition:smooth_weak_deformation_retraction}).
    Then $\w$ is exact, with a primitive given by
    \begin{equation*}
        {\left(\pi_V\right)}_*R^*\w = \int_{0}^{1}i_t^* \i_{\frac{\partial}{\partial t}}R^*\w \,dt.
    \end{equation*}
\end{thm}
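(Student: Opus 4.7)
The strategy is to follow the template of the relative Poincar\'e lemma (\Cref{lemma:relative_poincare}), but replacing the strong hypothesis ``$\alpha$ vanishes along $A$'' by the weaker fact $r^*\w = 0$, where $r \coloneq r^0$. Applying \Cref{lemma:homotopy_property_of_fiber_integration} to $\gamma \coloneq R^*\w$ and using $dR^*\w = R^*d\w = 0$, we obtain
\begin{equation*}
    \w - r^*\w = d(\pi_V)_* R^*\w.
\end{equation*}
So it is enough to prove $r^*\w = 0$ on $V$.

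I will show that $r^*\w$ vanishes on an open dense subset, hence (by smoothness of $r^*\w$) on all of $V$. For each stratum $X\in \S$, set $V_X \coloneq \interior(r^{-1}(X))$. On $V_X$ the smooth map $r:V \to V$ has image in the smoothly embedded submanifold $X$, so $r\big\lvert_{V_X} : V_X \to X$ is smooth and $\Image(Dr_p) \subset T_{r(p)}X$ for every $p\in V_X$. Since $X$ is isotropic, $\w$ annihilates $T_{r(p)}X$, whence $r^*\w\big\lvert_{V_X} = 0$.

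The delicate step is density of $\bigcup_{X\in \S} V_X$ in $V$. Let $W\subset V$ be a non-empty open set and pick $p\in W$. By local finiteness of $\S$, the point $q \coloneq r(p)$ has a neighbourhood $U_q \subset V$ meeting only finitely many strata $X_1, \ldots, X_n$. Set $W' \coloneq W\cap r^{-1}(U_q)$ (non-empty since it contains $p$) and $I \coloneq \{i : r(W')\cap X_i \neq \emptyset\}$. Choose $i_0 \in I$ maximal with respect to the order $\leq$ on strata. By the frontier condition (\Cref{def:stratified_space}), $X_{i_0}\cap \overline{X_i} = \emptyset$ for every $i\in I\setminus\{i_0\}$, so
\begin{equation*}
    O \coloneq W' \setminus r^{-1}\Biggl(\bigcup_{i\in I\setminus\{i_0\}}\overline{X_i}\Biggr)
\end{equation*}
is an open subset of $W$. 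It contains any point of $r^{-1}(X_{i_0})\cap W'$ (non-empty since $i_0\in I$), and for $p''\in O$ the image $r(p'')$ lies in $\bigcup_{i\in I}X_i$ but avoids $\overline{X_i}$ for $i\neq i_0$, so $r(O)\subset X_{i_0}$. Hence $\emptyset \neq O \subset V_{X_{i_0}}\cap W$, proving density.

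\textbf{The hard part} is this density step; it combines the frontier condition with local finiteness of the stratification (and in particular uses neither Whitney (B) regularity nor isotropy). The remainder of the argument mirrors \Cref{lemma:relative_poincare}, with isotropy entering only through the elementary fact that $\w$ vanishes on tangent spaces of isotropic strata and that a smooth map into $V$ with image in an embedded submanifold is smooth into the submanifold.
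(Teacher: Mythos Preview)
Your proof is correct and follows the same overall template as the paper: apply the homotopy formula (\Cref{lemma:homotopy_property_of_fiber_integration}) to reduce to $r^*\w = 0$, then verify this on an open dense subset where $r$ lands in a single isotropic stratum. The density arguments, however, are genuinely different. The paper filters $V$ by dimension, setting $V^{\leq d} \coloneqq r^{-1}(A^{\leq d})$, and invokes an auxiliary topological lemma (\Cref{lemma:closed_filtration_interiors_dense}) asserting that for any finite descending chain of closed subsets the union of the interiors of the successive differences is dense; Whitney~(B) enters to guarantee that each $A^{\leq d}$, and hence each $V^{\leq d}$, is closed. You instead work directly with the partial order on $\S$: on any small open set you pick a maximal stratum among those hit by $r$ and use the frontier condition to carve out a non-empty open subset mapped entirely into it. Your route is more self-contained (no auxiliary lemma needed) and, as you correctly note, uses only local finiteness and the frontier condition, so it in fact proves the statement without the Whitney~(B) hypothesis. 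The paper's route, by contrast, isolates the density step as a reusable lemma about closed filtrations, at the cost of invoking regularity that is not strictly necessary here.
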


We prove \Cref{thm:exactness_of_symplectic_form} further below. In the proof, we use the following topological lemmas.

For a topological space $V$ and a subset $A$, we denote the interior of $A$ in $V$ by $\interior_V(A)$.

\begin{lemma}\label{lemma:closed_set_interior_complement_dense}
    Let $V$ be a topological space and $A$ a closed subset. Then
    \begin{equation*}
        \left(V\setminus A\right) \cup \interior_V(A)
    \end{equation*}
    is dense in $V$.
\end{lemma}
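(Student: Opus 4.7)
The plan is to show that the complement of $(V\setminus A) \cup \interior_V(A)$ in $V$ has empty interior, which is equivalent to density. Since $A$ is closed, this complement is exactly $A \setminus \interior_V(A)$, i.e., the topological boundary of $A$.

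So the task reduces to proving $\interior_V(A \setminus \interior_V(A)) = \emptyset$. I would argue by contradiction: suppose $U \subset V$ is a nonempty open set contained in $A \setminus \interior_V(A)$. Then on one hand $U \subset A$ with $U$ open, so by definition of interior $U \subset \interior_V(A)$. On the other hand $U \cap \interior_V(A) = \emptyset$ by the assumed containment. Together these force $U = \emptyset$, contradicting nonemptiness. Hence the interior of $A \setminus \interior_V(A)$ is empty, equivalently $(V\setminus A) \cup \interior_V(A)$ is dense in $V$.

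There is no real obstacle here; the statement is a standard point-set fact, and the proof is a two-line manipulation with the definition of interior. The only thing to be slightly careful about is that closedness of $A$ is used to identify the complement of $(V\setminus A) \cup \interior_V(A)$ with $A\setminus \interior_V(A)$ rather than $\overline{A}\setminus \interior_V(A)$, but this identification is immediate.
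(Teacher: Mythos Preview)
Your proof is correct and is precisely the standard unpacking of the paper's one-line ``Immediate from definitions.'' As a minor aside, your argument nowhere actually uses closedness of $A$---the complement of $(V\setminus A)\cup\interior_V(A)$ is $A\setminus\interior_V(A)$ for \emph{any} subset $A$, and your contradiction argument works regardless---so the hypothesis is in fact superfluous.
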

\begin{proof}
    Immediate from definitions.
\end{proof}

\begin{lemma}\label{lemma:closed_filtration_interiors_dense}
    Let $V$ be a topological space, $N\in \N$ and
    \begin{equation*}
        V = V^{\leq N} \supset V^{\leq (N-1)} \supset \ldots \supset V^{\leq 1} \supset V^{\leq 0} \supset V^{\leq (-1)} = \emptyset
    \end{equation*}
    a descending chain of closed subsets.
    Then
    \begin{equation*}
        \bigcup_{d=0}^N \interior_V\left(V^{\leq d} \setminus V^{\leq (d-1)}\right)
    \end{equation*}
    is dense in $V$.
\end{lemma}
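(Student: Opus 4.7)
The plan is to argue by induction on $N$. The base case $N=0$ is immediate, since then $V^{\leq 0} = V$ and $V^{\leq(-1)} = \emptyset$, so the single set in the union is $\interior_V(V) = V$, which is dense in $V$.

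For the inductive step, I would first apply \Cref{lemma:closed_set_interior_complement_dense} to the closed subset $V^{\leq(N-1)} \subset V$, which yields that $\bigl(V\setminus V^{\leq(N-1)}\bigr) \cup \interior_V\bigl(V^{\leq(N-1)}\bigr)$ is dense in $V$. Since $V^{\leq(N-1)}$ is closed, its complement is open, so $V\setminus V^{\leq(N-1)} = V^{\leq N}\setminus V^{\leq(N-1)} = \interior_V\bigl(V^{\leq N}\setminus V^{\leq(N-1)}\bigr)$, which accounts for the $d=N$ term of the union. It therefore suffices to show that every point of $\interior_V\bigl(V^{\leq(N-1)}\bigr)$ lies in the closure of $\bigcup_{d=0}^{N-1}\interior_V\bigl(V^{\leq d}\setminus V^{\leq(d-1)}\bigr)$.

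To establish this, I would apply the inductive hypothesis to the filtration $V^{\leq(N-1)} \supset \ldots \supset V^{\leq(-1)} = \emptyset$ \emph{inside} the ambient space $V^{\leq(N-1)}$ with its subspace topology, obtaining that $\bigcup_{d=0}^{N-1}\interior_{V^{\leq(N-1)}}\bigl(V^{\leq d}\setminus V^{\leq(d-1)}\bigr)$ is dense in $V^{\leq(N-1)}$. The key observation is then: if a point $q$ lies in $\interior_V\bigl(V^{\leq(N-1)}\bigr)$ and is also in the subspace-interior $\interior_{V^{\leq(N-1)}}\bigl(V^{\leq d}\setminus V^{\leq(d-1)}\bigr)$, witnessed by a subspace-open set of the form $W\cap V^{\leq(N-1)}$ with $W$ open in $V$, then $W \cap \interior_V\bigl(V^{\leq(N-1)}\bigr)$ is open in $V$, contains $q$, and is contained in $V^{\leq d}\setminus V^{\leq(d-1)}$; hence $q \in \interior_V\bigl(V^{\leq d}\setminus V^{\leq(d-1)}\bigr)$. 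Combining this with density in $V^{\leq(N-1)}$, applied after intersecting an arbitrary $V$-open neighborhood of $p$ with the open set $\interior_V\bigl(V^{\leq(N-1)}\bigr)$, gives the required approximation.

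The only real subtlety is bookkeeping about which interior is taken in which ambient space: the conclusion requires interiors in $V$, whereas the inductive hypothesis naturally produces density in a closed subspace using interiors relative to that subspace. The bridge between the two is the elementary fact that $\interior_V\bigl(V^{\leq(N-1)}\bigr)$ is itself open in $V$, so intersecting with it promotes a subspace-open neighborhood to a genuine $V$-open one without losing the inclusion into $V^{\leq d}\setminus V^{\leq(d-1)}$. No regularity or separation assumption on $V$ is needed.
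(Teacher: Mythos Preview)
Your argument is correct and follows essentially the same inductive scheme as the paper: split off the top layer via \Cref{lemma:closed_set_interior_complement_dense}, then handle the rest by induction, using that $\interior_V\bigl(V^{\leq(N-1)}\bigr)$ is open in $V$ to convert subspace-interiors into $V$-interiors. The only cosmetic difference is that the paper applies the inductive hypothesis directly to the \emph{open} subspace $V'=\interior_V\bigl(V^{\leq(N-1)}\bigr)$ (where interiors automatically agree with $V$-interiors), whereas you apply it to the closed subspace $V^{\leq(N-1)}$ and then intersect with $V'$ afterward; the bridging observation you spell out is exactly what makes the two variants equivalent.
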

\begin{proof}
    We prove by induction on $N$.

    The base $N=1$ is \Cref{lemma:closed_set_interior_complement_dense}.

    For the induction step $N$, define $V' \coloneq \interior_{V}\left(V^{\leq (N-1)} \right)$ and consider the descending chain
    \begin{equation*}
        V' = \left(V'\cap V^{\leq (N-1)}\right) \supset \ldots \supset \left(V' \cap V^{\leq 1}\right) \supset \left(V' \cap V^{\leq 0}\right) \supset \left(V' \cap V^{\leq (-1)}\right) = \emptyset
    \end{equation*}
    of closed subsets in $V'$. By the induction assumption for $N-1$, we get that
    \begin{equation*}
        \bigcup_{d=0}^{N-1} \interior_{V'}\left(\left(V' \cap V^{\leq d}\right) \setminus \left(V' \cap V^{\leq (d-1)}\right)\right)
        = \bigcup_{d=0}^{N-1} \interior_{V'}\left( \left(V^{\leq d} \setminus V^{\leq (d-1)}\right) \cap V'\right)
    \end{equation*}
    is dense in $V'$. Note that since $V'$ is open in $V$ we have
    \begin{equation*}
        \interior_{V'}\left( \left(V^{\leq d} \setminus V^{\leq (d-1)}\right) \cap V'\right) = \interior_{V}\left(V^{\leq d} \setminus V^{\leq (d-1)}\right) \cap V'.
    \end{equation*}
    Therefore,
    \begin{equation*}
        \bigcup_{d=0}^{N-1} \interior_{V}\left(V^{\leq d} \setminus V^{\leq (d-1)}\right) \cap V'
    \end{equation*}
    is dense in $V'$.

    By \Cref{lemma:closed_set_interior_complement_dense} applied to the pair $V=V^{\leq N}, V^{\leq (N-1)}$, we get that
    \begin{equation*}
        \left( V^{\leq N} \setminus V^{\leq (N-1)}\right) \cup \interior_{V}\left(V^{\leq (N-1)} \right) = \left( V^{\leq N} \setminus V^{\leq (N-1)}\right) \cup V'
    \end{equation*}
    is dense in $V$. It follows that
    \begin{equation*}
        \left( V^{\leq N} \setminus V^{\leq (N-1)}\right) \cup \bigcup_{d=0}^{N-1} \interior_{V}\left(V^{\leq d} \setminus V^{\leq (d-1)}\right) \cap V'
    \end{equation*}
    is dense in $V$, which implies the result.
\end{proof}

We are now ready to prove \Cref{thm:exactness_of_symplectic_form}.

\begin{proof}[Proof of \Cref{thm:exactness_of_symplectic_form}]
    As in \Cref{definition:smooth_weak_deformation_retraction}, we write $r^t:=R(t,\cdot)$. Let $r\coloneq r^0$.
    Apply \Cref{lemma:homotopy_property_of_fiber_integration} to $\gamma=R^*\w$. Since $d\w=0$ and $r^1=\Id_V$, we get
    \begin{equation*}
        \w -r^*\w = d\left( (\pi_V)_*R^*\w \right).
    \end{equation*}
    Thus, it is enough to prove that $r^*\w =0$.

    By Whitney (B) regularity, the subsets $A^{\leq d}$ are closed in $V$ and, by continuity, the subsets $V^{\leq d} \coloneq r^{-1}(A^{\leq d})$ are closed in $V$. By \Cref{lemma:closed_filtration_interiors_dense}, the union
    \begin{equation*}
        \bigcup_{d=0}^N \interior_V\left(V^{\leq d} \setminus V^{\leq (d-1)}\right)
    \end{equation*}
    is dense in $V$. It follows that it is enough to prove that $r^*\w = 0$ on the open subsets $\interior_V\left(V^{\leq d} \setminus V^{\leq (d-1)}\right)$ for every $d$.

    Let $q\in \interior_V\left(V^{\leq d} \setminus V^{\leq (d-1)}\right)$, then $r(q) \in X$ for some $X\in \S^d$. For all $v\in T_qV$ we have
    \begin{equation*}
        D_qr(v) \in T_{r(q)}X
    \end{equation*}
    and the result follows from the assumption that $X$ is isotropic.
\end{proof}

Recall \cite[Remark~11.11]{cieliebak2012stein} that for a closed manifold $L$, there exists a Weinstein structure on the cotangent bundle $T^*L$, such that the zero section is its Lagrangian skeleton. \Cref{thm:exactness_of_symplectic_form} indicates that an analogous phenomenon might happen in our singular setting.

\begin{question}\label{question:Liouville_domain}
    Let $(V, \w)$ be a symplectic manifold and $(A, \S)$ a Lagrangian stratified subspace of it. Assume that $V$ admits a smooth weak deformation retraction to $A$.
    Under what conditions are there a neighbourhood $W$ of $A$ in $V$ with a smooth boundary and a vector field $Z$ such that $\left(W, \w, Z \right)$ is a Liouville domain, i.e., $d\i_Z\w = \w$ and $Z$ is outward pointing along $\partial W$? Furthermore, under what conditions is there a Weinstein structure on $W$?
\end{question}

\begin{question}\label{question:Lagrangian_skeleton}
    Under what conditions is there a Liouville domain $\left(W, \w, Z \right)$ such that $A$ is the Lagrangian skeleton of $W$?
\end{question}

In a future work we intend to study the relation between our construction, Liouville domains, and their Lagrangian skeleta.

\section{Explicit examples}\label{section:explicit_examples}

In this section, we construct stratified diffeomorphisms satisfying the assumption of \Cref{thm:main_coisotropic} for two examples of singular Lagrangian subspaces: Lagrangian $(p,q)$-pinwheels with $d-1$ attached spheres, and solid mutation configurations. The construction is possible in these examples for the following reasons: 
\begin{enumerate}
    \item they are Lagrangian, so the symplectic forms vanish on the regular part;
    \item there exist symplectic normal forms near their singular parts, detailed below;
    \item there are no topological obstructions to globally extending diffeomorphisms that are defined near the singular parts.
\end{enumerate}

In \Cref{subsection:lagrangian_pinwheels}, we use the following standard normal form result.

\begin{lemma}\label{lemma:normal_form_around_transverse_intersection}
    Let $(M,\w)$ be a symplectic manifold and let $L, L'\subset M$ be Lagrangian submanifolds. Assume that $L$ and $L'$ intersect transversely at a point $p$. Then there exists a Darboux chart $(x,y):U\to W \subset \R^{2n}$ around $p$ that identifies $L\cap U$ with $\{y=0\} \cap W$ and $L'\cap U$ with $\{x=0\} \cap W$.
\end{lemma}

In \Cref{subsection:solid_mutation_conf}, we use the following generalization, due to Po\'zniak.

\begin{lemma}[{\cite[Proposition~3.4.1]{pozniak_lagrangian_clean_intersection}}]\label{lemma:normal_form_around_clean_intersection}
    Let $(M,\w)$ be a symplectic manifold with Lagrangian submanifolds $L, L'\subset M$. Assume that $L$ and $L'$ intersect cleanly along a compact submanifold $N$, i.e., $N = L\cap L'$ and for every $p\in N$ we have $T_pN = T_pL\cap T_pL'$.
    Then there exist a vector bundle $E\to N$, a neighbourhood $U$ of $N$ in $M$, a neighbourhood $V$ of $N$ in $T^*E$, and a symplectomorphism $\psi:U \to V$ such that the following holds.
    \begin{enumerate}
        \item $\psi\Big\lvert_N$ is the zero section of $T^*E\Big\lvert_N$.
        \item $\psi\left(L\cap U\right) = E\cap V$.
        \item $\psi\left(L'\cap U\right) = TN^{\text{ann}} \cap V$, where
        \begin{equation*}
            TN^{\text{ann}} = \{\alpha \in T^*E\Big\lvert_N:\: \alpha\Big\lvert_{TN}\equiv 0\}.
        \end{equation*}
    \end{enumerate}
\end{lemma}

\subsection{Lagrangian pinwheels with attached spheres}\label{subsection:lagrangian_pinwheels}

Let $D\subset \C$ be the closed unit $2$-disk and let $(M, \w)$ be a symplectic $4$-manifold.

\begin{definition}[{\cite[Definition~3.1]{symplectic_rational_blowup},\cite[Definition~2.3]{evans2018markov}}]\label{definition:good_pinwheel}
   Let $p,q\in \N$ be coprime integers. A \textbf{good Lagrangian $\mathbf{(p,q)}$-pinwheel embedding} is a Lagrangian immersion $f:D \to M$ such that the following holds.
   \begin{enumerate}
        \item $f\Big\lvert_{D \setminus \partial D}$ is an embedding.
        \item $f(x) = f(y)$ if and only if $x /y$ is a $p$-th root of unity. It follows that $C\coloneq f\left(\partial D\right)$ is an embedded circle in $M$.
        \item There exists a neighbourhood $U$ of $C$ in $M$, a neighbourhood $V$ of $\partial D$ in $D$, and coordinates $\left(\varphi, s, z = x+iy\right):U \to \left(\R / 2\pi \Z\right)\times \R \times \C$ such that        
        \begin{enumerate}
            \item for any $e^{i\theta} \in \partial D$, we have $(\varphi, s,z)\left(f\left(e^{i\theta}\right)\right) = \left(p\theta, 0,0\right)$,
            \item $\w = d\varphi \wedge ds + dx \wedge dy$, and
            \item $\left((\varphi, s,z)\circ f\right)\Big\lvert_V$ is given by
            \begin{equation*}
                re^{i\theta} \mapsto \left(p\theta,\, q{(1-r)}^2/2p,\, (1-r)e^{-iq\theta}\right).
            \end{equation*}
        \end{enumerate}
   \end{enumerate}

   A \textbf{good Lagrangian $\mathbf{(p,q)}$-pinwheel} is a closed subset $L_{pq}\subset M$ that is the image of such an embedding $f:D\to M$. 
   The subset $C=f\left(\partial D\right)$ is the \textbf{core circle} of $L_{pq}$.
\end{definition}

\begin{definition}
    Let $p,q\in \N$ be coprime integers and let $d\in \N$. Let $L_{pq}$ be a good Lagrangian $(p,q)$-pinwheel and, if $d\geq 2$, let ${\{S^2_i\}}_{1\leq i\leq d-1}$ be Lagrangian $2$-spheres. 
    Assume that any two consecutive Lagrangian $2$-spheres intersect transversely at a single point, and any non-consecutive Lagrangian $2$-spheres are disjoint.
    Assume that $L_{pq}$ intersects $S^2_1$ transversely at a single point not on the core circle and is disjoint from $S^2_i$ for $i>1$.

    The closed subset
    \begin{equation*}
        L_{dpq} \coloneq L_{pq}\cup \bigcup_{1\leq i\leq d-1}S^2_i
    \end{equation*}
    is a \textbf{good Lagrangian $\mathbf{(p,q)}$-pinwheel with $\mathbf{d-1}$ attached spheres}. For $d=1$, we have $L_{1pq} = L_{pq}$.
\end{definition}

\begin{proposition}\label{proposition:stratification_of_pinwheel}
    Let $L_{dpq}$ be a good Lagrangian $(p,q)$-pinwheel with $d-1$ attached spheres. Consider the decomposition $\S$ of $L_{dpq}$ into the $d-1$ intersection points, the core circle, and the $d$ connected components of the rest. Then $\S$ is a stratification, and $(A, \S)$ is smoothly locally trivial with conical fibers, Lagrangian, and strongly coisotropic.
\end{proposition}
\begin{proof}
    We verify that transverse intersection points and points in the core circle admit conical charts as in \Cref{definition:smoothly_locally_trivial}. 
    
    For transverse intersection points, the existence of a conical chart follows from \Cref{lemma:normal_form_around_transverse_intersection}.
    
    Let $U$ and $(\varphi, s, z):U \to \left(\R / 2\pi \Z\right) \times \R \times \C$ be as in \Cref{definition:good_pinwheel}. 
    Let $c\in C$ be a point in the core circle. Use the coordinates to identify its neighbourhood in $M$ with a neighbourhood of $c=(\varphi_0,0,0)$ in $(\varphi_0 -\varepsilon, \varphi_0 + \varepsilon)\times \R \times \C$.
    Define the smooth map
    \begin{equation*}
        h:(\varphi_0 -\varepsilon, \varphi_0 + \varepsilon)\times \R \times \C \to (\varphi_0 -\varepsilon, \varphi_0 + \varepsilon)\times \R \times \C
    \end{equation*}
    by
    \begin{equation*}
        h(\varphi, s, z) = \left(\varphi, s - q\norm{z}^2/2p, e^{iq\varphi/p}z\right).
    \end{equation*}
    
    Let $\theta \in \R,\,\varphi_\theta \in (\varphi_0 - \varepsilon, \varphi_0 + \varepsilon)$, and $k_\theta \in \Z$ be such that $p\theta = \varphi_{\theta} + 2\pi k_{\theta}$. Then
    \begin{equation}\label{equation:pinwheel_conical_nbhd}
        \begin{aligned}
            (h\circ f)(re^{i\theta}) 
            &= h\left(\varphi_\theta, q{(1-r)}^2/2p, (1-r)e^{-iq\left(\varphi_{\theta}/p + 2\pi k_{\theta}/p\right)}\right) \\
            &= \left(\varphi_\theta, q{(1-r)}^2/2p - q{(1-r)}^2/2p, e^{iq\varphi_\theta/p}(1-r)e^{-iq\left(\varphi_{\theta}/p + 2\pi k_{\theta}/p\right)}\right) \\
            &= \left(\varphi_\theta, 0, (1-r)e^{-iq2\pi k_{\theta}/p}\right).
        \end{aligned}
    \end{equation}
    In a small enough neighbourhood $W$ of $c$, the map $h$ gives a chart in which, by \Cref{equation:pinwheel_conical_nbhd}, the intersection of the $(p,q)$-pinwheel with $W$ is identified with 
    \begin{equation*}
        \left((\varphi_0 -\varepsilon, \varphi_0 + \varepsilon) \times \{0\} \times \{re^{i\theta}: r\geq 0, e^{ip\theta} = 1\}\right)\cap h(W).
    \end{equation*}
    Since $\{re^{i\theta}: r\geq 0, e^{ip\theta} = 1\} \subset \C$ is conical, this chart is conical.

    The formula for $f$ implies that $(L_{dpq},\S)$ is Lagrangian. By \Cref{remark:depth_1_strongly_coisotropic}, it is strongly coisotropic.
\end{proof}

We now verify that good Lagrangian pinwheels satisfy the hypotheses of \Cref{thm:main_coisotropic}.

\begin{proposition}\label{proposition:L_dpq_stratified_diffeo}
    Let $(M,\w)$ and $(\widetilde{M},\widetilde{\w})$ be symplectic manifolds. Let $p,q\in \N$ be coprime integers and let $d\in \N$. Let $L_{dpq} \subset M$ and $\widetilde{L}_{dpq}\subset \widetilde{M}$ be good Lagrangian $(p,q)$-pinwheels with $d-1$ attached spheres, and let $\S$ and $\widetilde{\S}$ be their stratifications from \Cref{proposition:stratification_of_pinwheel}.
    Then there exists a stratified diffeomorphism $g:\left(L_{dpq}, \S\right) \to \left(\widetilde{L}_{dpq}, \widetilde{\S}\right)$ with $g^*\widetilde{\w} = \w$ as Zariski forms on $L_{dpq}$.
\end{proposition}
\begin{proof}
    Write $L_{dpq} = L_{pq}\cup \bigcup_{i}S^2_i$ and $\widetilde{L}_{dpq} = \widetilde{L}_{pq}\cup \bigcup_{i}\widetilde{S}^2_i$, and let $C\subset L_{pq}$ and $\widetilde{C} \subset \widetilde{L}_{pq}$ be the core circles. Let $a_1 =L_{pq}\cap S^2_1$ and $\widetilde{a}_1 = \widetilde{L}_{pq}\cap \widetilde{S}^2_1$,  and, for $2\leq i \leq d-1$, let $a_i =S^2_i\cap S^2_{i+1}$ and $\widetilde{a}_i = \widetilde{S}^2_i\cap \widetilde{S}^2_{i+1}$.

    Let $U_0\subset M$ and $\widetilde{U}_0\subset \widetilde{M}$ be open neighbourhoods of $C$ and $\widetilde{C}$ respectively, and let
    \begin{equation*}
        \psi:U_0 \to \left(\R / 2\pi \Z\right) \times \R \times \C \qquad\text{and}\qquad \widetilde{\psi}:\widetilde{U}_0 \to \left(\R / 2\pi \Z\right) \times \R \times \C
    \end{equation*}
    be as in \Cref{definition:good_pinwheel}. 
    Potentially shrinking the neighbourhoods $U_0$ and $\widetilde{U}_0$, the composition ${\left(\widetilde{\psi}\right)}^{-1}\circ \psi:U_0 \to \widetilde{U}_0$ is a symplectomorphism which sends $L_{pq} \cap U_0$ to $\widetilde{L}_{pq} \cap \widetilde{U}_0$ and $C$ to $\widetilde{C}$.
    The restriction
    \begin{equation*}
        g_0\coloneq \left({\left(\widetilde{\psi}\right)}^{-1}\circ \psi\right)\Big\lvert_{L_{pq} \cap U_0}:
        L_{pq} \cap U_0 \to \widetilde{L}_{pq} \cap \widetilde{U}_0
    \end{equation*}
    is a stratified diffeomorphism with $g_0^*\widetilde{\w} = \w$ as Zariski forms on $L_{dpq} \cap U_0$.

    Fix orientations on $L_{pq} \setminus C$ and $\widetilde{L}_{pq} \setminus \widetilde{C}$
    such that $g_0\Big\lvert_{\left(L_{pq}\cap U_0\right) \setminus C}$ is orientation preserving. 
    For each $1\leq i \leq d-1$, fix orientation on $S^2_i$ and $\widetilde{S}^2_i$ such that all the transverse intersections are positive. 

    For each $1\leq i \leq d-1$, we now construct diffeomorphism between neighbourhoods of the transverse intersections. Fix $i$. Applying \Cref{lemma:normal_form_around_transverse_intersection} at the transverse intersection points $a_i,\widetilde{a}_i$, we obtain neighbourhoods $U_i, \widetilde{U}_i$ of $a_i,\widetilde{a}_i$ in $M,\widetilde{M}$ respectively, and symplectomorphisms
    \begin{equation*}
        (x_1, x_2, y_1, y_2):U_i \to \R^4 \qquad\text{and}\qquad (\widetilde{x}_1, \widetilde{x}_2, \widetilde{y}_1, \widetilde{y}_2):\widetilde{U}_i \to \R^4
    \end{equation*}
    that identify the Lagrangian components with $\{x=0\},\,\{y=0\},\,\{\widetilde{x}=0\}$, and $\{\widetilde{y}=0\}$.
    Potentially shrinking the neighbourhoods, the composition
    \begin{equation*}
        {(\widetilde{x}_1, \widetilde{x}_2, \widetilde{y}_1, \widetilde{y}_2)}^{-1}\circ (x_1, x_2, y_1, y_2):U_i \to \widetilde{U}_i
    \end{equation*}
    is a symplectomorphism which restricts to a stratified diffeomorphism
    \begin{equation*}
        g_i:L_{dpq} \cap U_i \to \widetilde{L}_{dpq} \cap \widetilde{U}_i
    \end{equation*}
    such that the following holds. 
    \begin{enumerate}
        \item\label{property:g_i_property_1} $g_i^*\widetilde{\w} = \w$ as Zariski forms on $L_{dpq}\cap U_i$.
        \item $g_i\left(S^2_i\cap U_i\right)=\widetilde{S}^2_i \cap \widetilde{U}_i$.
        \item If $i=1$, then $g_1\left(L_{pq}\cap U_1\right) = \widetilde{L}_{pq}\cap \widetilde{U}_1$.
        \item\label{property:g_i_property_4}  If $2\leq i \leq d-1$, then $g_i\left(S^2_{i-1}\cap U_i\right)=\widetilde{S}^2_{i-1} \cap \widetilde{U}_i$.
    \end{enumerate}
    Possibly replacing $x_1,y_1$ with $-x_1, -y_1$, we arrange that the restriction 
    \begin{equation*}
        g_i\Big\lvert_{S^2_i \cap U_i}:S^2_i \cap U_i \to \widetilde{S}^2_i \cap \widetilde{U}_i
    \end{equation*}
    preserves orientation and properties \eqref{property:g_i_property_1}--\eqref{property:g_i_property_4} still hold. 

    If $d=1$, then $L_{pq} \setminus C$ and $\widetilde{L}_{pq}\setminus \widetilde{C}$ are Lagrangian open disks, and $g_0:L_{pq} \cap U_0 \to \widetilde{L}_{pq} \cap \widetilde{U}_0$ extends to a stratified diffeomorphism $g:L_{pq} \to \widetilde{L}_{pq}$ that agrees with $g_0$ in a neighbourhood of $C$.
    If $d\geq 2$, then the punctured pinwheels $L_{pq} \setminus \left(C \cup \{a_1\}\right)$ and $\widetilde{L}_{pq} \setminus \left(\widetilde{C} \cup \{\widetilde{a}_1\}\right)$ and the punctured spheres $S^2_i \setminus \{a_{i},a_{i+1}\}$ and $\widetilde{S}^2_i \setminus \{\widetilde{a}_{i}, \widetilde{a}_{i+1}\}$ for $1\leq i\leq d-2$ are Lagrangian annuli, and the punctured spheres $S^2_{d-1} \setminus \{a_{d-1}\}$ and $\widetilde{S}^2_{d-1} \setminus \{\widetilde{a}_{d-1}\}$ are Lagrangian open disks.
    Since there are no obstructions coming from orientation, we can extend ${\{g_i\}}_i$ to a stratified diffeomorphism $g$ which agrees with $g_0$ in a neighbourhood of $C$ and with ${\{g_i\}}_{1\leq i \leq d-1}$ in neighbourhoods of $a_i$. 
    
    Finally, the condition $g^*\widetilde{\w} = \w$ follows from the corresponding property of ${\{g_i\}}_{0\leq i \leq d-1}$ and since both forms vanish on the regular part.
\end{proof}

\begin{cor}\label{cor:L_dpq_determines_neigbourhood}
    With the assumptions and notation of \Cref{proposition:L_dpq_stratified_diffeo}, there exist neighbourhoods $U,\widetilde{U}$ of $L_{dpq},\widetilde{L}_{dpq}$ in $M,\widetilde{M}$ respectively and a symplectomorphism $\G:U \to \widetilde{U}$ which restricts to a stratified diffeomorphism $\left(L_{dpq}, \S\right) \to \left(\widetilde{L}_{dpq}, \widetilde{\S}\right)$.
\end{cor}
\begin{proof}
    Apply \Cref{proposition:L_dpq_stratified_diffeo,proposition:L_dpq_stratified_diffeo} and \Cref{thm:main_coisotropic}.
\end{proof}

\subsection{Solid mutation configurations}\label{subsection:solid_mutation_conf}

\begin{definition}[{\cite[Definition~2.1]{solid_mutation_conf}}]\label{definition:solid_mutation}
    Let $(M^{2n}, \w)$ be a symplectic manifold. A \textbf{solid mutation configuration} consists of a Lagrangian torus $L \coloneq \T^n \subset M$ and a Lagrangian solid torus $\mathfrak{T}\coloneq D^2\times \T^{n-2} \subset M$ such that
    \begin{enumerate}
        \item $L$ and $\mathfrak{T}$ intersect cleanly along $\partial \mathfrak{T} \simeq \T^{n-1}$, and
        \item the pair $\left(L, \partial \mathfrak{T}\right)$ is diffeomorphic to the standard pair $(\T^n, \T^{n-1})$.
    \end{enumerate}
\end{definition}

\begin{remark}
    If $n=2$, then a solid mutation configuration is a mutation configuration in the sense of \cite[Definition~4.10]{PASCALEFF2020106850}.
\end{remark}

\begin{proposition}\label{proposition:solid_mutation_stratification}
    Let $(L, \mathfrak{T})$ be a solid mutation configuration. Consider the decomposition $\S$ of $L \cup \mathfrak{T}$ into $\partial \mathfrak{T},\, \interior\left(\mathfrak{T}\right)$, and $\left(L \setminus \partial \mathfrak{T}\right)$.
    Then $\S$ is a stratification, and $\left(L \cup \mathfrak{T},\S\right)$ is smoothly locally trivial with conical fibers, Lagrangian, and strongly coisotropic.
\end{proposition}
\begin{proof}
    To verify that points in $\partial \mathfrak{T}$ admit conical charts as in \Cref{definition:smoothly_locally_trivial}, apply \Cref{lemma:normal_form_around_clean_intersection} and take a local trivialization of $E$ over $N$. 
    \Cref{definition:solid_mutation} implies that
    $\left(L \cup \mathfrak{T},\S\right)$ is Lagrangian, and by \Cref{remark:depth_1_strongly_coisotropic} it is strongly coisotropic.
\end{proof}

To construct a stratified diffeomorphism satisfying the hypotheses of \Cref{thm:main_coisotropic} we use the following Lemmas.

\begin{lemma}[{\cite[Theorem~III.3.3]{kosinski2013differential}}]\label{lemma:uniqueness_of_collars}
    Let $M$ be a manifold with boundary and let $\psi:U \to \widetilde{U}$ be a diffeomorphism of collar neighbourhoods of $\partial M$ that restricts to the identity on $\partial M$. 
    Then there exists a diffeomorphism $M\to M$ that restricts to $\psi$ on a smaller neighbourhood of $\partial M$.
\end{lemma}

\begin{lemma}\label{lemma:uniqueness_of_standard_codim_1_subtorus}
    Let $(\T^n, \T^{n-1})$ be the standard torus pair and let $\psi:U \to \widetilde{U}$ be a diffeomorphism of tubular neighbourhoods of $\T^{n-1}$ that restricts to the identity on $\T^{n-1}$.
    Then there exists a diffeomorphism $\T^n \to \T^n$ that restricts to $\psi$ on a smaller neighbourhood of $\T^{n-1}$.
\end{lemma}
\begin{proof}
    Identify $\T^n \simeq \T^{n-1} \times S^1$. 
    Using the vector bundle structures on $U$ and $\widetilde{U}$ and the isomorphism $\text{GL}_1(\R) \simeq \R^*$, we may assume that $U = \widetilde{U} \simeq \T^{n-1} \times (-\varepsilon, \varepsilon)$ and that $\psi:\T^{n-1} \times (-\varepsilon, \varepsilon) \to \T^{n-1} \times (-\varepsilon, \varepsilon)$ is either the identity or $(t, x) \mapsto (t,-x)$. The map $(t, x) \mapsto (t,-x)$ can be extended to $\T^{n-1} \times S^1 \to \T^{n-1} \times S^1$ by
    \begin{equation*}
        \left(t, \theta\right) \mapsto \left(t, -\theta\right),
    \end{equation*}
    and similarly for the identity.
\end{proof}

\begin{remark}
    The analogue of \Cref{lemma:uniqueness_of_standard_codim_1_subtorus} fails for subtori of codimension greater than $1$. 
    Indeed, let $S^1 \times \{0\} \times \{0\} \subset \T^3$ and let $S^1 \times D^2 \subset \T^3$ be a tubular neighbourhood of it. The diffeomorphism
    \begin{equation*}
        (\theta, z) \mapsto (\theta, e^{i\theta}z)
    \end{equation*}
    cannot be extended to a diffeomorphism $\T^3 \to \T^3$, as we now explain.
    
    If such an extension exists, then it induces a diffeomorphism of $\T^3 \setminus \left(S^1 \times D\right)$ to itself, and therefore an automorphism of
    \begin{equation*}
        \pi_1\left(\T^3 \setminus \left(S^1 \times D\right)\right) = \pi_1\left(S^1 \times \left(\T^2 \setminus D\right)\right) \simeq \Z c \oplus F_2(a,b)
    \end{equation*}
    where $c$ is the generator of $\pi_1\left(S^1\right)$ and $F_2(a,b)$ is the free group generated by $a$ and $b$. Writing $\varepsilon$ for the unit in $F_2(a,b)$, this automorphism maps $(c, \varepsilon)$ to $(c, aba^{-1}b^{-1})$. 
    Since $( c,\varepsilon)$ is in the center of $\Z \oplus F_2(a,b)$ and $(c, aba^{-1}b^{-1})$ is not, no such automorphism exists.
\end{remark}

\begin{proposition}\label{proposition:solid_mutation_stratified_diffeo}
    Let $(M, \w)$ and $(\widetilde{M},\widetilde{\w})$ be symplectic manifolds. Let $(L, \mathfrak{T})\subset M$ and $(\widetilde{L}, \widetilde{\mathfrak{T}})\subset \widetilde{M}$ be solid mutation configurations, and let $\S$ and $\widetilde{\S}$ be their stratifications from \Cref{proposition:solid_mutation_stratification}. Then there exists a stratified diffeomorphism $g:\left(L \cup \mathfrak{T},\S\right) \to \left(\widetilde{L} \cup \widetilde{\mathfrak{T}},\widetilde{\S}\right)$ with $g^*\widetilde{\w} = \w$ as Zariski forms on $L \cup \mathfrak{T}$.
\end{proposition}
\begin{proof}
    Fix a diffeomorphism of pairs $g_1:\left(\mathfrak{T}, \partial \mathfrak{T}\right) \to \left(\widetilde{\mathfrak{T}}, \partial \widetilde{\mathfrak{T}}\right)$ and a diffeomorphism of pairs $g_2:\left(L, \partial \mathfrak{T}\right) \to \left(\widetilde{L}, \partial \widetilde{\mathfrak{T}}\right)$ that agrees with $g_1$ on $\partial \mathfrak{T}$.
    Applying \Cref{lemma:normal_form_around_clean_intersection} along $\partial \mathfrak{T}, \partial \widetilde{\mathfrak{T}}$  in $M, \widetilde{M}$ respectively, we obtain neighbourhoods $U,\widetilde{U}$ of $\partial \mathfrak{T}, \partial \widetilde{\mathfrak{T}}$  in $M, \widetilde{M}$ respectively and a symplectomorphism $U \to \widetilde{U}$ that restricts to a stratified diffeomorphism 
    \begin{equation*}
        g_{\partial}:\left(L \cup \mathfrak{T} \right)\cap U \to \left(\widetilde{L} \cup \widetilde{\mathfrak{T}}\right) \cap \widetilde{U}
    \end{equation*}
    such that the following holds.
    \begin{enumerate}
        \item $g_\partial^*\widetilde{\w} = \w$ as Zariski forms on $\left(L \cup \mathfrak{T} \right)\cap U$.
        \item $g_\partial\Big\lvert_{\partial \mathfrak{T}}:\partial \mathfrak{T} \to \partial \widetilde{\mathfrak{T}}$ coincides with $g_1$ and $g_2$.
        \item $g_\partial\left(\interior\left(\mathfrak{T}\right) \cap U\right) = \interior\left(\widetilde{\mathfrak{T}}\right) \cap \widetilde{U}$.
        \item $g_\partial\left(\left(L \setminus \partial \mathfrak{T}\right) \cap U\right) = \left(\widetilde{L} \setminus \partial \widetilde{\mathfrak{T}}\right) \cap \widetilde{U}$.
    \end{enumerate}
    
    Using $g_1,g_2$ to identify $\mathfrak{T},L$ with $\widetilde{\mathfrak{T}},\widetilde{L}$ respectively and applying \Cref{lemma:uniqueness_of_collars,lemma:uniqueness_of_standard_codim_1_subtorus}, we obtain a stratified diffeomorphism $g:L \cup \mathfrak{T} \to \widetilde{L} \cup \widetilde{\mathfrak{T}}$ that agrees with $g_\partial$ on a smaller neighbourhood of $\partial \mathfrak{T}$. It satisfies $g^*\widetilde{\w} = \w$ by the corresponding property of $g_\partial$.
\end{proof}

\begin{cor}[{\cite[Lemma~2.4]{solid_mutation_conf}}]
    With the assumptions and notation of \Cref{proposition:solid_mutation_stratified_diffeo}, there exist neighbourhoods $U,\widetilde{U}$ of $L \cup \mathfrak{T},\widetilde{L} \cup \widetilde{\mathfrak{T}}$ in $M,\widetilde{M}$ respectively and a symplectomorphism $\G:U \to \widetilde{U}$ which restricts to a stratified diffeomorphism 
    \begin{equation*}
        g:\left(L \cup \mathfrak{T},\S\right) \to \left(\widetilde{L} \cup \widetilde{\mathfrak{T}},\widetilde{\S}\right).
    \end{equation*}
\end{cor}
\begin{proof}
    Apply \Cref{proposition:solid_mutation_stratification,proposition:solid_mutation_stratified_diffeo} and \Cref{thm:main_coisotropic}.
\end{proof}

\appendix

\section{Euler-like vector fields}\label{appendix:euler_likes}

In this appendix we give a quick introduction to Euler-like vector fields and prove
\linebreak \Cref{lemma:closed_open_sets_euler}. For more details about Euler-like vector fields see \cite{bursztyn2019splitting,MEINRENKEN2021224}.

For a submanifold $N\subset M$, denote the normal bundle of $N$ in $M$ by $\nu(M,N)$.

\begin{definition}
    A \textbf{tubular neighbourhood} of $N$ in $M$ consists of a conical open neighbourhood $O\subset \nu(M,N)$ of the zero section and an embedding $\Psi:O\to M$ taking the zero section $N\subset \nu(M,N)$ to $N\subset M$, such that the induced linearization
    \begin{equation*}
        \nu(\Psi):\nu(M,N)\simeq \nu(O,N) \to \nu(M,N)=\nu(M,N)
    \end{equation*}
    is the identity.

    A tubular neighbourhood is \textbf{complete} if $O=\nu(M,N)$.
\end{definition}

\begin{definition}
    Let $E\to M$ be a vector bundle and denote the fiberwise multiplication by $t\in \R$ by $m^t:E\to E$. The \textbf{Euler vector field on $E$} is the vector field $\E_E$ with flow
    \begin{equation*}
        \Phi_{\E_E}^t = m^{\exp(t)}:E\to E.
    \end{equation*}
\end{definition}

\begin{definition}[{\cite[Definition~3.1]{MEINRENKEN2021224}}]
    An \textbf{Euler-like vector field along $\mathbf{N}$} is a vector field $\E$, defined on a neighbourhood of $N$ in $M$, tangent to $N$, and such that its linearization is the Euler vector field on $\nu(M,N)$.
\end{definition}

\begin{remark}
    We do not require an Euler-like vector field to be defined on all $M$. We also do not require completeness of $\E$ or completeness of the tubular neighbourhood.
\end{remark}

\begin{thm}[\cite{bursztyn2019splitting,MEINRENKEN2021224}]\label{thm:euler_like_tubular}
    Let $\E_M$ be an Euler-like vector field along $N$. Then there exists a unique maximal tubular neighbourhood embedding $\Psi:O\to M$ such that $\E_M$ and $\E_{ \nu(M,N)}$ are $\Psi$-related.

    Moreover, the open set $\Psi(O)\subset M$ is given by
    \begin{equation*}
        \{p\in M : \lim_{t\to-\infty}\Phi_{\E_M}^t(p) \text{ exists and is in }N\}.
    \end{equation*}

    If $\E_M$ is complete, then the tubular neighbourhood is complete.
\end{thm}

\begin{lemma}[Convexity of Euler-like vector fields]\label{lemma:euler_like_convex}
    Let ${\{\E_i\}}_{i\in I}$ be Euler-like vector fields along $N$ and let ${\{\varphi_i\}}_{i\in I}$ be a partition of unity.
    Then $\sum_{i\in I} \varphi_i\cdot \E_i$ is also Euler-like along $N$.
\end{lemma}
\begin{proof}
    Calculation in local coordinates.
\end{proof}

\begin{definition}\label{definition:induced_multiplication_by_scalar}
    Let $\E$ be an Euler-like vector field along $N$ and let $\Psi:O \to M$ be the induced tubular neighbourhood embedding. For all $t\in[0,1]$ define the \textbf{induced multiplication by scalars} $m_{\E}^t:\Psi(O) \to \Psi(O)$ by
    \begin{equation*}
        m_{\E}^t = \Psi \circ m^t \circ \Psi^{-1}.
    \end{equation*}
\end{definition}

The following Lemma is used in \Cref{section:coisotropic_theorem}.

\begin{lemma}\label{lemma:closed_open_sets_euler}
    Let $N\subset M$ be a submanifold, $U\subset M$ an open subset, and $K\subset M$ a closed subset with $K\subset U$. Let $\E$ be an Euler-like vector field along $N$, inducing a tubular neighbourhood embedding $\Psi:O \to M$.
    Then there exists a smooth function $\varphi:\Psi(O)\to [0,1]$ that is identically $1$ on a neighbourhood of $N$, and such that the vector field $\widetilde{\E}\coloneq \varphi\cdot \E$ is Euler-like and satisfies the following property. If $K$ intersects a fiber of $m_{\widetilde{\E}}^0:\widetilde{\Psi}(\nu(M,N)) \to N$, then the whole fiber is contained in $U$.
\end{lemma}
\begin{figure}
    \centering
    \begin{tikzpicture}
        \foreach \x in {0,...,16}{
            \draw [dashed, ->] (\x * 0.5,-5) -- (\x * 0.5, -0.1);
            \draw [dashed, ->] (\x * 0.5, 5) -- (\x * 0.5,0.1);
        };
        \fill[rounded corners=10mm,color=yellow, opacity=0.5] (0,3) -- (6,3.5) -- (3, 1) -- (5, -4) node[above left = 4mm,opacity=1,color=yellow!20!black]{$U$} -- (0,-3);
        \filldraw[rounded corners=5mm,color=blue!70, opacity=0.5] (0,2) -- (5,3) -- (2,1) -- (4,-3) node[above left = 1mm,opacity=1,color=blue!50!black]{$K$} -- (0, -2);
        \fill[color=magenta, opacity=0.3] (0,1.3) -- (8,1.3) -- (8, -1.3) node[anchor=south east,opacity=1,fill=magenta!40]{$\supp{\varphi}$} -- (0,-1.3) -- cycle;
        \draw[color=red,line width=1pt] (0,0) -- (5.75,0) node[anchor=south,color=red!70!black]{$N$} -- (8,0);
    \end{tikzpicture}
    \caption{A schematic drawing of \Cref{lemma:closed_open_sets_euler}: the dashed arrows represent the orbits of $m^t_{\E}:\Psi(O) \to \Psi(O)$ for $t\in[0,1]$. If $K \cap \supp{\varphi}$ meets a fiber ${\left(m^0_{\E}\right)}^{-1}(n)$ then ${\left(m^0_{\E}\right)}^{-1}(n) \cap \supp{\varphi} \subset U$.}
    \label{figure:closed_open_euler_like}
\end{figure}
\begin{proof}
    Let $\norm{\cdot}$ be any fiber-wise norm on $\nu(M,N)$. For $n\in N$, denote by ${B(n, r) \subset \nu_n(M,N)}$ the ball of radius $r$ around $n$ in the fiber $\nu_n(M,N)$.

    Set $U_{\nu} \coloneq \Psi^{-1}(U \cap \Psi(O))$ and $K_{\nu} \coloneq \Psi^{-1}(K \cap \Psi(O))$.
    Let $f_{N\cap U}:\left(N\cap U_{\nu}\right) \to \R_{>0}$ be a smooth function with
    \begin{equation*}
        B\left(n, f_{N \cap U}(n)\right) \subset O \cap U_{\nu},\qquad \forall n\in\left(N\cap U_{\nu}\right),
    \end{equation*}
    let $f_{N \setminus K}: \left(N \setminus K_{\nu}\right) \to \R_{>0}$ be a smooth function with
    \begin{equation*}
        B\left(n, f_{N \setminus K}(n)\right) \subset O \setminus K_{\nu}, \qquad \forall n\in\left(N \setminus K_{\nu}\right),
    \end{equation*}
    and let $g:N\to [0,1]$ be a smooth bump function with
    \begin{enumerate}[(a)]
        \item $g\lvert_{N\cap K_{\nu}} \equiv 1$ and
        \item $g\lvert_{N \setminus U_{\nu}} \equiv 0$.
    \end{enumerate}

    Define $f:N \to \R_{>0}$ by
    \begin{equation*}
        f = g\cdot f_{N\cap U} + (1-g)\cdot f_{N \setminus K}
    \end{equation*}
    which is a smooth positive function on $N$. We verify in the following that it satisfies
    \begin{enumerate}
        \item\label{property:contained_in_O} $B(n, f(n)) \subset O$;
        \item\label{property:f_ball} for all $n\in N$, if $K_{\nu} \cap B(n, f(n)) \neq \emptyset$ then $B(n, f(n)) \subset U_{\nu}$.
    \end{enumerate}

    Property \cref{property:contained_in_O} holds because $f$ is a convex combination $f_{N\cap U}, f_{N \setminus K}$. For the second property, let $q\in K_{\nu} \cap B(n, f(n))$. Then either $f_{N \setminus K}(n)$ is not defined, or
    \begin{equation*}
        f_{N \setminus K}(n) < \norm{q} < f(n)
    \end{equation*}
    by definition of $f_{N \setminus K}$. It follows that $f_{N\cap U}(n)$ is defined and
    \begin{equation*}
        \norm{q} < f(n) \leq f_{N\cap U}(n),
    \end{equation*}
    which implies that $B(n, f(n)) \subset B\left(n, f_{N \cap U}(n)\right)$. But by definition of $f_{N \cap U}$ we have
    \begin{equation*}
        B\left(n, f_{N \cap U}(n)\right) \subset U_{\nu},
    \end{equation*}
    so $B(n, f(n)) \subset B\left(n, f_{N \cap U}(n)\right) \subset U_{\nu}$.

    Now let $h:\R_{\geq 0} \to [0,1]$ be a smooth function that is identically $1$ on $[0,\nicefrac{1}{3}]$ and identically $0$ on $[\nicefrac{2}{3}, \infty)$, and define
    \begin{equation*}
        \varphi_{\nu}(q)\coloneq h\left(\frac{\norm{q}}{(f\circ m_0)(q)}\right)
    \end{equation*}
    and
    \begin{equation*}
        \widetilde{\E}_{\nu} \coloneq \varphi_{\nu} \cdot \E_{\nu}.
    \end{equation*}
    The vector field $\widetilde{\E}_{\nu}$ is defined on $O$, Euler-like along $N\subset \nu(M,N)$, and induces a complete tubular neighbourhood embedding $\widetilde{\Psi}_{\nu}:\nu(M,N) \to O$ such that for all $n\in N$,
    \begin{equation*}
        \widetilde{\Psi}_{\nu}(\nu_n(M,N)) \subset B(n, f(n)) \subset \nu_n(M,N).
    \end{equation*}
    Therefore, property \cref{property:f_ball} of $f$ implies that if $K_\nu$ intersects a fiber of $m^0_{\tilde{\E}_\nu}$, then the whole fiber is contained in $U_{\nu}$.

    Let $\varphi=\varphi_{\nu}\circ \Psi^{-1}$ and note that $\widetilde{\E}\coloneq \varphi\cdot \E$ is the  pushforward of $\widetilde{\E}_{\nu}$ by $\Psi$. It is defined on $\Psi(O)$, an Euler-like vector field along $N\subset M$, and induces a complete tubular neighbourhood $\widetilde{\Psi}:\nu(M,N)\to M$ defined by
    \begin{equation*}
        \widetilde{\Psi}= \Psi\circ \widetilde{\Psi}_{\nu}.
    \end{equation*}
    The required property follows from the same property of $\widetilde{\E}_{\nu}$ in $\nu(M,N)$ and the fact that $\widetilde{\E}, \widetilde{\E}_{\nu}$ are $\Psi$-related.
\end{proof}

\section{Theorem~\ref{thm:moser_trick} for a strong deformation retraction}\label{appendix:strong_deformation_retraction_moser}

Recall from \Cref{remark:strong_vs_weak_deformation_retraction} the difference between a weak and a strong deformation retraction.

\begin{thm}\label{thm:moser_trick_with_a_strong_deformation_retraction}
    Let $M$ be a manifold, $A\subset M$ a submanifold, and $V$ a neighbourhood of $A$ admitting a smooth \textbf{strong} deformation retraction to $A$. Let $\w_0, \w_1$ be two symplectic forms on $V$ that agree on $TM\Big\lvert_A$.
    Then there exist neighbourhoods $U',U'' \subset V$ of $A$ and a diffeomorphism
    \begin{equation*}
        \G:U'\to U''
    \end{equation*}
    such that
    \begin{enumerate}
        \item $\G\big\lvert_A = \Id_A$,
        \item $\G^*(\w_1) = \w_0$,
        \item\label{property:strong_deformation_id_differential} $\mathbf{D\G\big\lvert_A = \Id}$.
    \end{enumerate}
\end{thm}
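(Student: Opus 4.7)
The plan is to re-examine the proof of \Cref{thm:moser_trick}, strengthening the vanishing order of the primitive $\beta$ produced by the relative Poincar\'e Lemma. Set $\alpha := \w_0 - \w_1$, which is closed and, because $\w_0$ and $\w_1$ agree on $TM\big\lvert_A$, vanishes at every point of $A$ as a section of $\Lambda^2 T^*V$. Applying \Cref{lemma:relative_poincare} yields $\beta := (\pi_V)_* R^*\alpha$ with $d\beta = \alpha$ and $\beta_p = 0$ for all $p \in A$. As in the proof of \Cref{lemma:using_beta_for_moser}, set $\w_t := (1-t)\w_0 + t\w_1$, define $X_t$ by $\i_{X_t}\w_t = -\beta$, and take $\G := \Phi^1_{(X_t)}$; by \Cref{lemma:flow_domain_vanish_neighbourhood} the map $\G$ is defined on a neighbourhood of $A$, and the standard Moser computation shows that $\G$ fixes $A$ pointwise and satisfies $\G^*\w_1 = \w_0$. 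It remains to establish $D\G\big\lvert_A = \Id$.

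The main new input, enabled by the strong retraction hypothesis, is that $\beta$ vanishes to \emph{second} order on $A$: for every $p \in A$ and every $w \in T_pV$, the derivative $D_w\beta_p$ is zero. Since $r^t(p) = p$ whenever $p \in A$, we have $\partial_t R\big\lvert_{(t,p)} = 0$. To compute $D_w\beta_p$, pick a curve $q(s)$ through $p$ with $q'(0) = w$, extend a test vector $v \in T_pV$ smoothly to $v(s) \in T_{q(s)}V$, and differentiate
\begin{equation*}
    s \longmapsto \alpha_{r^t(q(s))}\bigl(\partial_t R\big\lvert_{(t,q(s))},\, D_{q(s)}r^t(v(s))\bigr)
\end{equation*}
at $s = 0$. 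By the product rule three terms appear: one from differentiating the base point at which $\alpha$ is evaluated, one from differentiating the first entry $\partial_t R\big\lvert_{(t,\cdot)}$, and one from differentiating the second entry $D_\cdot r^t(v)$. The first and third retain an unchanged factor of $\partial_t R\big\lvert_{(t,p)} = 0$, while the second evaluates $\alpha$ at $p$, where $\alpha_p = 0$. All three vanish; integrating over $t$ gives $D_w\beta_p = 0$.

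With $\beta$ vanishing to second order on $A$, so does the Moser vector field $X_t$; in local coordinates both $X_t^i(p)$ and the Jacobian $\partial_j X_t^i(p)$ are zero for every $p \in A$ and every $t$. Since $X_s(p) = 0$, the flow $\Phi^s_{(X_t)}$ fixes each such $p$, and differentiating the flow equation $\partial_s \Phi^s(q) = X_s(\Phi^s(q))$ with respect to $q$ at $q = p$ yields, for $J(s) := D\Phi^s_{(X_t)}\big\lvert_p$, the linear ODE
\begin{equation*}
    J'(s) = (DX_s)(p)\cdot J(s), \qquad J(0) = \Id,
\end{equation*}
in agreement with \Cref{lemma:first_variation_vector_flow}. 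The coefficient matrix $(DX_s)(p)$ vanishes identically in $s$ by the second-order vanishing, so $J(s) \equiv \Id$ and $D\G\big\lvert_p = \Id$, as required.

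The hard part is the second-order vanishing computation; this is precisely where the hypothesis $r^t\big\lvert_A = \Id_A$ is essential, since \Cref{example:weak_moser_trick_differential} shows that the mere first-order vanishing one obtains from a weak retraction is insufficient. All other ingredients reuse machinery already developed in \Cref{section:moser_trick_proof}.
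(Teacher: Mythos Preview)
Your proof is correct and follows essentially the same approach as the paper: both show that the integrand of $\beta$ is a bilinear pairing of two quantities each vanishing along $A$ (namely $\alpha$ and $\partial_t R$), hence $\beta$ vanishes to second order, hence $X_t$ does, and the variational equation for the flow forces $D\G\big\lvert_A = \Id$. The only difference is packaging: the paper formalizes ``vanishes to order $j$'' via the $I$-adic filtration on sections and invokes bilinearity of contraction to get $I_1 \cdot I_1 \subset I_2$, whereas you carry out the same product-rule computation directly along a curve.
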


Note that in the statement of \Cref{thm:moser_trick_with_a_strong_deformation_retraction} we don't need the assumption that $A$ is a manifold, as a smooth retract of a manifold is a submanifold:

\begin{thm}[{\cite[Theorem~1.15]{michor2008topics}}]
    Let $M$ be a manifold, $A\subset M$ a subset, and $V$ a neighbourhood of $A$ admitting a smooth retraction to $A$.
    Then $A$ is a submanifold of $M$.
\end{thm}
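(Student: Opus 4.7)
The plan is to follow the proof of \Cref{thm:moser_trick} with $B=A$, but to exploit the strong deformation retraction assumption to upgrade the vanishing of the Moser primitive $\beta$ from first order to \emph{second} order along $A$. Second-order vanishing will transfer to the Moser vector field, and the equation of first variation (\Cref{lemma:first_variation_vector_flow}) will then force $D\G\big\lvert_A = \Id$. The main technical point is the second-order vanishing of $\beta$; once that is in hand, the rest is routine.

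Applying \Cref{lemma:relative_poincare} to $\alpha \coloneq \w_1 - \w_0$ produces a $1$-form $\beta = {\left(\pi_V\right)}_* R^* \alpha$ with $d\beta = \alpha$ and $\beta\big\lvert_A = 0$. To prove the stronger vanishing, I would fix local coordinates $(x, y)$ on $V$ with $A = \{y = 0\}$ and write
\begin{equation*}
    \beta(x,y)(v) = \int_{0}^{1} \alpha\bigl(R(t,x,y)\bigr)\bigl(\partial_t R(t,x,y),\; D_{(x,y)}R^t(v)\bigr)\, dt.
\end{equation*}
The strong condition $R(t, p) = p$ for $p \in A$ gives $\partial_t R \equiv 0$ on $[0,1] \times A$, while the hypothesis $\w_0 = \w_1$ on $TM\big\lvert_A$ gives $\alpha(x, 0) = 0$ as a bilinear form. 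Differentiating the integrand in $y_k$ at $y = 0$ by Leibniz, each resulting summand either evaluates $\alpha$ at $(x, 0)$ or plugs $\partial_t R(t, x, 0) = 0$ into one of its slots, and hence vanishes. Therefore $\partial_{y_k}\beta(x, 0) = 0$ for every $k$, and together with $\beta\big\lvert_A = 0$ this shows that every component of $\beta$ is $O(|y|^2)$ near $A$, i.e., $\beta$ vanishes to second order along $A$.

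Next, I would run the Moser construction from \Cref{lemma:using_beta_for_moser}: set $\w_t \coloneq (1-t)\w_0 + t\w_1$, which is non-degenerate on a neighbourhood of $A$ since $\w_t\big\lvert_{TM|_A} = \w_0\big\lvert_{TM|_A}$, and define the time-dependent vector field $X_t$ by $\i_{X_t}\w_t = -\beta$. Because inversion of $\w_t$ is a smooth bundle isomorphism, $X_t$ also vanishes to second order along $A$. By \Cref{lemma:flow_domain_vanish_neighbourhood}, $\G \coloneq \Phi^1_{(X_t)}$ is defined on a neighbourhood $U'$ of $A$; the standard Moser computation reproduced in \Cref{lemma:using_beta_for_moser} gives $\G^*\w_1 = \w_0$, and the fact that $X_t$ vanishes on $A$ gives $\G\big\lvert_A = \Id_A$.

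Finally, to establish the new condition $D\G\big\lvert_A = \Id$, I would fix $p \in A$ and $v \in T_pV$: since $X_t(p) = 0$ for all $t$, the flow fixes $p$ and $D_p\Phi^t_{(X_t)}$ is an endomorphism of $T_pV$. Setting $w^t \coloneq D_p\Phi^t_{(X_t)}(v)$, \Cref{lemma:first_variation_vector_flow} gives
\begin{equation*}
    \frac{d}{dt}\bigg\lvert_{t_0} w^t = \bigl(\kappa_V \circ DX_{t_0}\bigr)(w^{t_0}).
\end{equation*}
In local coordinates $X_t(z) = (z, f_t(z))$ on $V$, the second-order vanishing yields $f_{t_0}(p) = 0$ and $Df_{t_0}(p) = 0$, so $DX_{t_0}(w^{t_0}) = (p, 0, w^{t_0}, 0)$ in standard $TTV$-coordinates; the canonical flip then sends this to $(p, w^{t_0}, 0, 0)$, whose vertical component vanishes. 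Hence $\frac{d}{dt} w^t = 0$ for all $t$, and $w^0 = v$ gives $w^t \equiv v$. This proves $D\G\big\lvert_A = \Id$, completing the proof.
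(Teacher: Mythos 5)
Your proposal does not prove the statement it is attached to. The statement asserts a purely differential-topological fact: a subset $A\subset M$ that admits a smooth retraction from a neighbourhood $V$ is automatically a submanifold of $M$. There are no symplectic forms in its hypotheses and no diffeomorphism $\G$ in its conclusion. What you have written is instead a proof of \Cref{thm:moser_trick_with_a_strong_deformation_retraction} (the strong-retraction Moser trick with $D\G\big\lvert_A = \Id$). Moreover, your argument presupposes exactly the conclusion that was to be established: in your second paragraph you fix local coordinates $(x,y)$ on $V$ with $A=\{y=0\}$, i.e., you assume from the outset that $A$ is a submanifold. In the paper this statement is quoted from Michor precisely so that, in \Cref{thm:moser_trick_with_a_strong_deformation_retraction}, one need not \emph{assume} $A$ is a submanifold; your proposal inverts that logical order.

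A correct argument (Michor's) goes as follows. View the retraction as a smooth map $r:V\to V$ with $r\circ r=r$ and image $A$ (indeed $r(V)\subset A$ and $r\big\lvert_A=\Id_A$ give $r(V)=A$). For $p\in A$ the differential $D_pr$ is an idempotent endomorphism of $T_pV$, so its rank equals its trace; continuity of the trace together with integrality of the rank shows the rank of $Dr$ is locally constant along $A$, and the chain-rule identity $D_{r(x)}r\circ D_xr=D_xr$, combined with lower semicontinuity of the rank, forces the rank to be constant on a whole neighbourhood of $p$ in $V$. The constant rank theorem then exhibits the image $r(V)=A$ near $p$ as a submanifold. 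For what it is worth, your second-order-vanishing computation for $\beta$ and the ensuing first-variation argument are a reasonable coordinate version of what the paper does with the $I$-adic filtration in \Cref{appendix:strong_deformation_retraction_moser} --- but that is the proof of a different theorem, and it can only begin once the present statement has supplied the adapted coordinates you used.
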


\begin{definition}
    Let $M$ be a manifold and $A$ a submanifold. The \textbf{vanishing ideal of $\mathbf{A}$ in $\mathbf{M}$} is
    \begin{equation*}
        I(M,A) \coloneq \{f\in C^\infty(M):\: f\big\lvert_A \equiv 0 \}.
    \end{equation*}
    Note that this is indeed an ideal of $C^\infty(M)$.

    The \textbf{time-dependent vanishing ideal of $A$ in $M$} is
    \begin{equation*}
        I_{td}(M,A) \coloneq \{f\in C^\infty(\R \times M):\: f\big\lvert_{\R \times A} \equiv 0 \}.
    \end{equation*}
    This is an ideal of $C^\infty(\R \times M)$.
\end{definition}

\begin{definition}[{\cite[Definition~2.2]{diff_geom_of_weightings}}]\label{definition:order_of_vanishing_filtration}
    Abbreviate $I \coloneq I(M,A)$ and $I_{td} \coloneq I_{td}(M,A)$.

    The \textbf{$\mathbf{I}$-adic filtration of $\mathbf{C^\infty(M)}$} is
    \begin{equation*}
        \ldots \subset I_2 \subset I_1 \subset I_0 = C^\infty(M),
    \end{equation*}
    where $I_j = \left<I^j \right>$ is the ideal generated by $I^j$.

    Similarly, the \textbf{$\mathbf{I_{td}}$-adic filtration of $\mathbf{C^\infty(\R \times M)}$} is
    \begin{equation*}
        \ldots \subset I_{td,2} \subset I_{td,1} \subset I_{td,0} = C^\infty(\R \times M),
    \end{equation*}
    where $I_{td,j} = \left<I_{td}^j \right>$.
\end{definition}

\begin{remark}
    \Cref{definition:order_of_vanishing_filtration} gives a sense of order of vanishing of functions without using local coordinates. An equivalent definition is by order of vanishing in some (and thus, in all) local coordinates. The proof of this equivalence uses the existence of a finite atlas, whose domains of charts are not necessarily connected (see, e.g., \cite[Section~1.2]{alma990000534930204146}).
\end{remark}

\begin{notation}
    Let $E\to M$ be a vector bundle. We denote the set of smooth sections of $E$ by $\Gamma(E)$ and the set of smooth time-dependent sections of $E$ by $\Gamma_{td}(E)$.
\end{notation}

\begin{definition}\label{definition:I_adic_filtration_of_sections}
    Let $E\to M$ be a vector bundle. The \textbf{induced $\mathbf{I}$-adic filtration} of the $C^\infty(M)$-module $\Gamma(E)$ is
    \begin{equation*}
        \ldots \subset I^{\Gamma(E)}_2 \subset I^{\Gamma(E)}_1 \subset I^{\Gamma(E)}_0 = {\Gamma(E)},
    \end{equation*}
    where $I^{\Gamma(E)}_j = \left< I_j \cdot \Gamma(E) \right>$ is the submodule generated by $I_j \cdot \Gamma(E)$.

    Similarly, the \textbf{induced $\mathbf{I_{td}}$-adic filtration} of the $C^\infty(\R \times M)$-module $\Gamma_{td}(E)$ is
    \begin{equation*}
        \ldots \subset I^{\Gamma_{td}(E)}_2 \subset I^{\Gamma_{td}(E)}_1 \subset I^{\Gamma_{td}(E)}_0 = {\Gamma_{td}(E)},
    \end{equation*}
    where $I^{\Gamma_{td}(E)}_j =  \left< I_{td,j} \cdot \Gamma_{td}(E) \right>$.
\end{definition}

\begin{remark}
    Note that with this filtration, the complex $\left(\Omega^\bullet(M), d \right)$ is \emph{not} a filtered complex. Cf. \cite[Section~2.3]{diff_geom_of_weightings}.
\end{remark}

\begin{lemma}\label{lemma:integral_over_td_sections_vanish_order}
    Let $\sigma_t \in I_j^{\Gamma_{td}(E)}$. Then $\int_0^1\sigma_t dt \in I_j^{\Gamma(E)}$.
\end{lemma}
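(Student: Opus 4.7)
The plan is to reduce the problem to a local claim via Taylor expansion with smooth remainder. By the remark after \Cref{def:order_of_vanishing_filtration}, an element $f \in C^\infty(M)$ lies in $I_j$ if and only if, in every chart $(U,(x,y))$ adapted to $A$ (so that $A \cap U = \{y=0\}$), its Taylor expansion in $y$ vanishes to order $j-1$; the analogous characterization with an extra $t$-parameter holds for $I_{td,j}$ using charts $(\R \times U, (t,x,y))$. After choosing local trivializations of $E$, this transfers from ideals to modules: a section lies in $I_j^{\Gamma(E)}$ if and only if, in every chart of some trivializing cover, its components in the chosen frame lie in $I_j$, and analogously in the time-dependent case. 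Passage from local to global is handled by a partition of unity subordinate to a locally finite trivializing atlas adapted to $A$, together with auxiliary cutoffs that let one extend local monomial presentations by zero so that they factor globally into $j$ functions in $I$.

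Fix one such adapted trivializing chart with coordinates $(x,y) \in \R^{n-k} \times \R^k$. A time-dependent section $\sigma_t \in I_j^{\Gamma_{td}(E)}$ then admits, component-wise, a presentation
\begin{equation*}
    \sigma^a_t(x,y) = \sum_{|\alpha|=j} y^\alpha h^a_\alpha(t,x,y)
\end{equation*}
with $h^a_\alpha$ smooth in $(t,x,y)$, by Taylor's theorem with smooth integral remainder applied in the $y$-variables and treating $(t,x)$ as parameters. Integrating over $t \in [0,1]$, compactness of the interval and smooth dependence on parameters give
\begin{equation*}
    \int_0^1 \sigma^a_t(x,y)\, dt = \sum_{|\alpha|=j} y^\alpha \int_0^1 h^a_\alpha(t,x,y)\, dt,
\end{equation*}
where each $\int_0^1 h^a_\alpha(\cdot,x,y)\, dt$ is smooth in $(x,y)$. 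Thus $\int_0^1 \sigma_t\, dt$ lies in $I_j^{\Gamma(E)}$ locally, and the partition-of-unity argument above promotes this to a global statement.

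The core ingredients are Taylor's theorem with smooth remainder and differentiation under the integral, with no real obstacle: compactness of $[0,1]$ ensures smooth dependence on parameters, so all the operations commute as desired. The only bookkeeping point is the globalization, which follows from standard cutoff-and-partition techniques once the local form above is established.
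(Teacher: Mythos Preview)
Your proposal is correct and is precisely the fleshed-out version of the paper's own proof, which reads in full ``This follows by computation in local coordinates.'' You have supplied the local-coordinate computation (Taylor expansion in the normal variables, integration in $t$, and partition-of-unity globalization) that the paper leaves implicit.
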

\begin{proof}
    This follows by computation in local coordinates.
\end{proof}

\begin{lemma}\label{lemma:pullbacks_preserve_filtrations}
    Let $M, M'$ be manifolds, $A, A'$ submanifolds of them, $I,I'$ the vanishing ideals, and $I_j, I_j'$ the $I,I'$-adic filtrations, respectively. Let $g:M \to M'$ be a smooth map with $g(A) \subset A'$. Let $E' \to M'$ be a vector bundle and $g^*E' \to M$ the pullback bundle.
    Then
    \begin{enumerate}
        \item The pullback of functions $g^*:C^\infty(M') \to C^\infty(M)$,
        \item the pullback of sections $g_s^*:\Gamma(E') \to \Gamma(g^*E')$,
        \item the pullback of forms $g^*:\Omega^i(M') \to \Omega^i(M)$
    \end{enumerate}
    all preserve the respective induced $I,I'$-adic filtrations.
\end{lemma}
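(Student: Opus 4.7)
The crux of the statement is that the pullback of functions sends the vanishing ideal $I' = I(M', A')$ into $I = I(M, A)$: for any $f \in I'$, the function $g^*f = f \circ g$ vanishes on $A$ because $g(A) \subset A'$. I will make this elementary observation the cornerstone of the argument and leverage the fact that pullback is a ring homomorphism on functions and respects the corresponding module structures on sections and forms.

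For item (1), since $g^*: C^\infty(M') \to C^\infty(M)$ is a ring homomorphism with $g^*(I') \subset I$, it sends $j$-fold products to $j$-fold products, so $g^*\bigl((I')^j\bigr) \subset I^j$ for every $j \geq 0$. Taking the ideals generated by these subsets yields $g^*(I_j') \subset I_j$. In fact, because $I$ is already an ideal of $C^\infty(M)$, the set $I^j$ is itself an ideal, so $I_j = I^j$ and the ``ideal closure'' in the definition adds nothing.

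For items (2) and (3), I will reduce to (1) by unpacking the definition of the induced filtration. An element of $I^{\Gamma(E')}_j$ is a finite sum $\sum_k \varphi_k \sigma_k$ with $\varphi_k \in I_j'$ and $\sigma_k \in \Gamma(E')$. Because pullback of sections is compatible with the module structure in the sense $g_s^*(\varphi \cdot \sigma) = g^*(\varphi) \cdot g_s^*(\sigma)$, its image is $\sum_k g^*(\varphi_k) \, g_s^*(\sigma_k)$, which lies in $I^{\Gamma(g^*E')}_j$ by (1). The same computation works verbatim for forms, using $g^*(\varphi \eta) = g^*(\varphi) \cdot g^*(\eta)$ for a function $\varphi$ and a form $\eta$ on $M'$.

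There is essentially no obstacle here: the statement is a formal consequence of the multiplicativity of pullback together with the basic observation in the first paragraph. The only minor bookkeeping item is to verify that the submodule generated by $I_j \cdot \Gamma(E)$ coincides with $I_j \cdot \Gamma(E)$ itself, which holds because $I_j$ is an ideal; this ensures that no further closure step is needed in (2) and (3).
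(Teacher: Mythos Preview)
Your argument is correct. The paper itself disposes of this lemma in a single line---``This follows by computation in local coordinates''---so your write-up is in fact considerably more explicit than what appears there. Where the paper implicitly relies on writing everything out in adapted coordinates $(x,y)$ with $A=\{y=0\}$ and tracking powers of the $y$-variables through the chain rule, you instead exploit directly that $g^*$ is a ring homomorphism carrying $I'$ into $I$, together with the $C^\infty$-linearity of pullback on sections and forms. This algebraic route is cleaner and coordinate-free; the local-coordinate route has the minor advantage of making the equivalence with ``order of vanishing'' in the Remark following the definition immediately visible, but for the statement as written your approach is the more natural one.
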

\begin{proof}
    This follows by computation in local coordinates.
\end{proof}

\begin{notation}
    Denote by $\mathfrak{X}(M) \coloneq \Gamma(TM)$ the set of smooth vector fields on $M$. Denote by $\mathfrak{X}_{td}(M)$ the set of smooth time-dependent vector fields on $M$.
\end{notation}

\begin{lemma}\label{lemma:vector_field_vanish_to_second_order_differential}
    Let $(X_t)\in \mathfrak{X}_{td}(M)$ and assume that $(X_t)\in I_2^{\mathfrak{X}_{td}(M)}$.
    Then the differential of the flow $D\Phi_{(X_t)}^{t_0}\Big\lvert_A:TM \to TM$ restricts on $A$ to
    \begin{equation*}
        \Id:TM\Big\lvert_A \to TM\Big\lvert_A,
    \end{equation*}
    whenever it is defined.
\end{lemma}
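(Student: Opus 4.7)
The plan is to combine the second-order vanishing hypothesis on $X_t$ along $A$ with the equation of first variation (\Cref{lemma:first_variation_vector_flow}) to show that the differential of the flow is constant in $t$ along $A$.

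Since $(X_t) \in I_2^{\mathfrak{X}_{td}(M)}$ in particular forces $X_t\big\lvert_A \equiv 0$ for every $t$, each point $p \in A$ is stationary for the time-dependent flow, so $\Phi^{t_0}_{(X_s)}$ fixes $A$ pointwise. Fixing $p \in A$ and $v \in T_pM$, we therefore set
\begin{equation*}
    \phi(t) \coloneq D\Phi^t_{(X_s)}(v) \in T_pM, \qquad \phi(0) = v,
\end{equation*}
and the statement reduces to showing $\phi'(t) \equiv 0$ on the interval of existence.

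By \Cref{lemma:first_variation_vector_flow},
\begin{equation*}
    \phi'(t_0) = \kappa_M\bigl(DX_{t_0}(\phi(t_0))\bigr),
\end{equation*}
with $DX_{t_0}$ evaluated at $p$. Choose local coordinates $(x^1,\ldots,x^n)$ around $p$ adapted to $A$ and write $X_t = \sum_j a^j_t(x)\, \partial_{x_j}$. The hypothesis $X_t \in I_2^{\mathfrak{X}_{td}(M)}$ unpacks to: for every $t$ each coefficient $a^j_t$ lies in $I_{td,2}$, so both $a^j_t(p) = 0$ and $\partial_i a^j_t(p) = 0$ at every $p\in A$. Viewing $X_t$ as the section $x \mapsto (x, a_t(x))$ of $TM$ and using the $TTM$-coordinates $(x, v_x;\, u_x, u_{v_x})$ from \Cref{subsection:flow_prop_proof}, a direct computation gives
\begin{equation*}
    DX_{t_0}\big\lvert_p(w) = (p,\, 0;\; w,\, 0),
    \qquad
    \kappa_M\bigl(DX_{t_0}\big\lvert_p(w)\bigr) = (p,\, w;\; 0,\, 0),
\end{equation*}
which is the zero tangent vector at $(p, w) \in TM$. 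Hence $\phi'(t) = 0$ throughout the interval of existence, and $\phi(t) \equiv v$, which is exactly $D\Phi^{t_0}_{(X_s)}\big\lvert_p(v) = v$.

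The main delicate point is to verify that the algebraic hypothesis $X_t \in I_2^{\mathfrak{X}_{td}(M)}$ really translates, in local coordinates adapted to $A$, to the simultaneous vanishing of both $a^j_t$ and the spatial partial derivatives $\partial_i a^j_t$ along $A$; this is exactly where the \emph{second}-order vanishing is used. First-order vanishing of $X_t$ alone would leave a nonzero vertical entry $Da_t\big\lvert_p(w)$ in $DX_{t_0}\big\lvert_p(w)$, which $\kappa_M$ would carry to a nonzero $u_{v_x}$-component rather than the zero tangent vector, and the argument would fail.
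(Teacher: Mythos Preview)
Your proof is correct and follows essentially the same route as the paper's: both invoke the first variation equation (\Cref{lemma:first_variation_vector_flow}), compute $DX_{t_0}$ at points of $A$ in local coordinates, use the second-order vanishing to kill the $u_{v_x}$-component, and conclude via $\kappa_M$ that the derivative of $D\Phi^{t}_{(X_s)}(v)$ vanishes along $A$. The only cosmetic difference is that the paper verifies directly that $\Id$ solves the first-variation ODE, whereas you integrate $\phi'(t)=0$; these are the same argument.
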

\begin{proof}
    Let $p\in A$ and let $U$ be a neighbourhood of $p$ in $M$, with coordinates $(x,y)$, such that $p$ is identified with $0$ and $U\cap A$ is identified with $\{y=0\}$. Then
    \begin{equation*}
        \left(x,y ;\; v_x, v_y \right)
    \end{equation*}
    are coordinates on $TU$ and
    \begin{equation*}
        \left(x,y, v_x, v_y; u_x, u_y, u_{v_x}, u_{v_y}\right)
    \end{equation*}
    are coordinates on $TTU$.

    Now, $X_t(x,y)$ has the form
    \begin{equation*}
        X_t(x,y) = \left(x,y ;\; X_t^x(x,y), X_t^y(x,y) \right),
    \end{equation*}
    where, for all $t$,
    \begin{align*}
        &X_t^x(x,0)=0, &DX_t^x(x,0)= 0, \\
        &X_t^y(x,0)=0, &DX_t^y(x,0)= 0.
    \end{align*}
    It follows that
    \begin{equation*}
        DX_t(x, 0; v_x, v_y) = \left(x, 0, 0, 0;\; v_x, v_y, 0, 0 \right).
    \end{equation*}
    Lastly, we check that
    \begin{equation*}
        \Id:TM\Big\lvert_A \to TM\Big\lvert_A \qquad \forall t_0\in \R
    \end{equation*}
    satisfies the differential equation
    \begin{equation*}
        \frac{d}{ds}\bigg\lvert_{t_0}\left(D\Phi^{s}_{(X_t)}(v) \right) = \left(\kappa_M\circ DX_{t_0} \circ D\Phi_{X_{t_0}}^{t_0}\right)(v)
    \end{equation*}
    which characterizes $D\Phi^{t_0}_{(X_t)}$. Here $\kappa_M$ is \emph{canonical flip}, defined by
    \begin{equation*}
        \kappa_M\left(x,y, v_x, v_y; u_x, u_y, u_{v_x}, u_{v_y}\right) = \left(x,y, u_x, u_y; v_x, v_y, u_{v_x}, u_{v_y}\right)
    \end{equation*}
    See \cite[Sections~(8.12-8.19)]{michor2008topics}. Indeed,
    \begin{align*}
        \frac{d}{dt}\bigg\lvert_{t_0}\bigl( \Id\left(x,0;\; v_x, v_y\right)\bigr) &= \frac{d}{dt}\bigg\lvert_{t_0}\left( x,0;\; v_x, v_y\right)\\
        &= \left(x,0,v_x,v_y ;\; 0,0,0,0 \right)
    \end{align*}
    and
    \begin{align*}
        \left(\kappa_M\circ DX_t \circ \Id\right)\left(x,0,v_x,v_y\right) &= \kappa_M\left(x,0,0,0 ;\; v_x,v_y,0,0 \right) \\
        &= \left(x,0,v_x,v_y ;\; 0,0,0,0 \right).
    \end{align*}
\end{proof}

\begin{proof}[Proof of \Cref{thm:moser_trick_with_a_strong_deformation_retraction}]
    We follow the proofs of \Cref{lemma:relative_poincare,lemma:using_beta_for_moser} and use the same notation, with the additional assumption that the deformation retraction $R:[0,1]\times V \to V$ is strong. We want to prove property~\cref{property:strong_deformation_id_differential}, i.e., that the diffeomorphism $\G:U' \to U''$ that we construct in the proofs of these lemmas satisfies $D\G\Big\lvert_A = \Id$.

    Recall that $\G$ is the time one flow of a time-dependent vector field $(X_t)$ defined by ${\i_{X_t}\w_t = \beta}$, where
    \begin{equation*}
        \beta \coloneq \int_0^1 i^*_t \i_{\frac{\partial}{\partial t}} R^* \alpha
    \end{equation*}
    and $\alpha \coloneq \w_0 - \w_1$. We claim that it is enough to prove that $\beta \in I_2^{\Omega^1(V)}$.
    Indeed, $\beta \in I_2^{\Omega^1(V)}$ implies $(X_t)\in I_2^{\mathfrak{X}_{td}(V)}$, and we finish by \Cref{lemma:vector_field_vanish_to_second_order_differential}.

    Using the formula for $\beta$ and \Cref{lemma:integral_over_td_sections_vanish_order,lemma:pullbacks_preserve_filtrations}, it is enough to show that
    \begin{equation*}
        \i_{\frac{\partial}{\partial t}} R^* \alpha \in I_2^{\Omega^1\bigl((0,1) \times V\bigr)}
    \end{equation*}
    for the induced $I\Bigl((0,1) \times V, (0,1) \times A\Bigr)$-adic filtration.

    Let $p\in A$, $t_0\in (0,1)$ and $v\in T_{(t_0, p)}\Bigl((0,1) \times V \Bigr)$. Then
    \begin{equation}\label{equation:iota_d_dt_R_alpha_expression}
        \begin{aligned}
            \left(\i_{\frac{\partial}{\partial t}} R^* \alpha \right)_{p}(v)
            &= \left(R^* \alpha\right)_{(t_0, p)} \left(\frac{\partial}{\partial t}, v \right) \\
            &= \alpha_{R(t_0, p)}\left(D_{(t_0, p)}R\left(\frac{\partial}{\partial t}\right), D_{(t_0, p)}R (v)\right).
        \end{aligned}
    \end{equation}

    We consider the pullback bundles
    \begin{enumerate}
        \item $R^*\left(\bigwedge^2 T^*V \right) \to (0,1) \times V$,
        \item $R^*\left(TV\right) \to (0,1) \times V$,
        \item $R^*\left(T^*V\right) \to (0,1) \times V$.
    \end{enumerate}
    Since $\alpha \in I_1^{\Omega^2(V)}$, \Cref{lemma:pullbacks_preserve_filtrations} implies that the section $R_s^*\alpha$ of the bundle ${R^*\left(\bigwedge^2 T^*V \right)}$ satisfies
    \begin{equation*}
        R_s^*\alpha \in I_1^{\Gamma\left(R^*\left(\bigwedge^2 T^*V \right)\right)}.
    \end{equation*}
    Since the deformation retraction is strong, the section $DR\left(\frac{\partial}{\partial t} \right)$ of the pullback bundle $R^*\left(TV\right)$ satisfies
    \begin{equation*}
        DR\left(\frac{\partial}{\partial t} \right) \in I_1^{\Gamma\left(R^*\left(TV\right)\right)}.
    \end{equation*}

    The point-wise contraction on the pullback bundles
    \begin{equation*}
        \i:\Gamma\left(R^*\left(\bigwedge^k T^*V \right)\right) \times \Gamma\left(R^*\left(TV\right)\right) \to  \Gamma\left(R^*\left(\bigwedge^{k-1} T^*V\right) \right)
    \end{equation*}
    is $C^\infty\left( (0,1) \times V\right)$-bilinear; it follows that
    \begin{equation*}
        \i\Bigl( I_1^{\Gamma\left(R^*\left(\bigwedge^2 T^*V \right)\right)} \times I_1^{\Gamma\left(R^*\left(TV\right)\right)}\Bigr) \subset I_2^{\Gamma\left(R^*\left(T^*V\right)\right)}.
    \end{equation*}
    In particular,
    \begin{equation*}
        \i\left(R_s^*\alpha, DR\left(\frac{\partial}{\partial t} \right)\right) \in I_2^{\Gamma\left(R^*\left(T^*V\right)\right)}.
    \end{equation*}
    Therefore, for all $\tilde{Y} \in \Gamma\left(R^*\left(TV\right)\right)$, we have
    \begin{equation}\label{equation:double_contraction}
        \begin{aligned}
            \i\left(\i\left(R_s^*\alpha, DR\left(\frac{\partial}{\partial t} \right)\right), \tilde{Y} \right) &= R_s^*\alpha \left(DR\left(\frac{\partial}{\partial t}\right), \tilde{Y} \right) \\
            &\in I_2 \Bigl( (0,1) \times V, (0,1) \times A \Bigr).
        \end{aligned}
    \end{equation}

    For all $Y \in \mathfrak{X}\Bigl((0,1) \times V \Bigr)$ consider $DR\left(Y\right) \in \Gamma\left(R^*\left(TV\right)\right)$.
    By equation~\eqref{equation:iota_d_dt_R_alpha_expression}, we have
    \begin{equation*}
        \left(\i_{\frac{\partial}{\partial t}} R^* \alpha\right)(Y) =  R_s^*\alpha \left(DR\left(\frac{\partial}{\partial t}\right), DR\left(Y\right) \right).
    \end{equation*}
    Equation~\eqref{equation:double_contraction} therefore implies that
    $\left( \i_{\frac{\partial}{\partial t}}R^*\alpha \right)(Y) \in I_2 \left( (0,1) \times V, (0,1) \times A \right)$.
    Finally, since this holds for all $Y \in \mathfrak{X}\Bigl((0,1) \times V \Bigr)$, we obtain $\i_{\frac{\partial}{\partial t}} R^* \alpha \in I_2^{\Omega^1\bigl((0,1) \times V\bigr)}$.
\end{proof}

\bibliographystyle{plain}
\bibliography{main}

\end{document}